\numberwithin{equation}{section}
\newcommand{\round}[1]{\left\{#1\right\}}
\newcommand{\sech}{{\mathop{\rm sech}}}
\newcommand{\weakconv}{\xrightarrow{w}}
\newcommand{\WZ}{W_{\mathbb{Z}}}
\newcommand{\ctilg}{{\sum_{k \in \bZ} c_k g(\cdot-k)}}
\newcommand{\sett}[1]{\ensuremath{\left \{ #1 \right \}}}
\newcommand{\sisp}{V}
\newcommand{\tp}{totally positive}
\newcommand{\ltwo}{L^2(\bR )}
\newcommand{\ml}{m_\Lambda}
\newcommand{\mg}{m_\Gamma}
\newcommand{\mult}{m}
\newtheorem*{lemma*}{Lemma}
\DeclareMathOperator*{\supp}{supp}
\newcommand{\field}[1]{\mathbb{#1}}
\newcommand{\bR}{\field{R}}
\newcommand{\bN}{\field{N}}
\newcommand{\bZ}{\field{Z}}
\newcommand{\bC}{\field{C}}
\def\inv{^{-1}}
\newcommand{\abs}[1]{\lvert#1\rvert}
\newcommand{\norm}[1]{\lVert#1\rVert}
\newcommand{\bignorm}[1]{\big\lVert#1\big\rVert}
\newtheorem{tm}{Theorem}[section]
\newtheorem{lemma}[tm]{Lemma}
\newtheorem{prop}[tm]{Proposition}
\newtheorem{cor}[tm]{Corollary}
\newcommand{\Rst}{\bR}
\newcommand{\Zst}{\bZ}
\newcommand{\Nst}{\bN}
\begin{document}
\begin{abstract}
We study the problem of sampling with derivatives in shift-invariant spaces
generated by totally-positive functions of Gaussian type  or by  the
hyperbolic secant. We provide
sharp conditions in terms of weighted Beurling densities. As a
by-product we derive new results about multi-window  Gabor frames with
respect to vectors of Hermite functions or totally positive
functions.
\end{abstract}

\title[Sharp results on sampling with derivatives]
{Sharp results on sampling with derivatives in shift-invariant spaces
and multi-window Gabor frames}
\author{Karlheinz Gr\"ochenig}
\email{karlheinz.groechenig@univie.ac.at}

\author{Jos\'e Luis Romero}
\email{jose.luis.romero@univie.ac.at}
\address{Faculty of Mathematics \\
University of Vienna \\
Oskar-Morgenstern-Platz 1 \\
A-1090 Vienna, Austria}

\author{Joachim St\"ockler}
\email{joachim.stoeckler@math.tu-dortmund.de}

\address{Faculty of Mathematics \\
TU Dortmund \\
Vogelpothsweg 87 \\
D-44221 Dortmund, Germany }

\subjclass[2000]{42C15,42C40,94A20}
\date{}
\keywords{Shift-invariant space, sampling with derivatives, Gaussian,
  totally positive function,
  Jensen's formula, Gabor frame, Beurling density}
\thanks{J.~L.\ R.\
gratefully acknowledges support from the Austrian Science Fund (FWF):
P 29462 - N35.}
\maketitle

\section{Introduction and results}
In the problem of sampling with derivatives one tries to recover or
approximate a function by sampling a number of its derivatives. In
analogy to Hermite interpolation this procedure is sometimes called
Hermite sampling. For a well-defined problem one must fix a suitable
signal model, which in engineering is usually a space of bandlimited
functions (the Paley-Wiener space in mathematical terminology). In
recent years the more general model of shift-invariant spaces has
received considerable attention as a viable substitute for bandlimited
functions. See~\cite{AG00} for an early survey.

Hermite sampling can be seen as a purely mathematical problem in
approximation theory, but it is also informed by practical
considerations. Whereas a sample $f(\lambda )$ at a sampling point $\lambda
$ gives its  pointwise value, the derivative $f'(\lambda )$ measures
the trend of $f$ at $\lambda $, and higher derivatives  yield
information about the local
approximation by Taylor polynomials. In addition, by taking several
measurements at each point, one may hope to use fewer sampling
points.

Based on our experience, we will analyze Hermite sampling in
shift-invariant spaces that are generated by certain totally positive
functions. We will call
a function $g: \Rst \to \bC$  \emph{\tp\ of Gaussian type} if its Fourier
transform factors as
\begin{equation}\label{eq:tpaa}
  \hat g(\xi)= \prod_{j=1}^n (1+2\pi i\delta_j\xi)^{-1} \, e^{-c \xi
    ^2},\qquad \delta_1,\ldots,\delta_n\in\bR, c >0, n\in \bN \cup \{0\} \, .
\end{equation}
We study the problem of sampling with multiplicities in the  shift-invariant space
\begin{align*}
\sisp^p(g) = \big\{ f\in L^p(\Rst) : f = \sum _{k\in \bZ } c_k g(\cdot
  - k),  \, c\in
\ell ^p(\bZ )\big\},
\end{align*}
generated by a totally-positive function of Gaussian-type,
where $1\le p\le \infty$.
To describe the sampling process, we fix a sampling set $\Lambda
\subseteq \Rst$ and a multiplicity function  $\ml: \Lambda \to \bN$, and
call $(\Lambda,\ml)$ a  set with multiplicity. The number $m_\Lambda (\lambda )$
indicates how many  derivatives are sampled at  $\lambda \in \Lambda
$.

We then  say that $(\Lambda, \ml)$ is a sampling set for $\sisp^p(g)$
with $1\leq p <\infty$,  if
there exist constants $A,B>0$ such that
\begin{equation}\label{eq:lpstable}
A \norm{f}_p^p\leq\sum_{\lambda \in \Lambda} \sum_{j=0}^{\ml(\lambda)-1}
\abs{f^{(j)}(\lambda)}^p
\leq B \norm{f}_p^p, \qquad f \in \sisp^p(g) \, .
\end{equation}
If $p=\infty$,  a sampling set is defined by the inequalities
\begin{align} \label{eq:h7}
A \norm{f}_\infty \leq \sup_{\lambda \in \Lambda} \max_{0\le j\le\ml(\lambda)-1}
\abs{f^{(j)}(\lambda)}
\leq B \norm{f}_\infty, \qquad f \in \sisp^\infty(g) \, .
\end{align}

From a theoretical point of view the  sampling inequality
\eqref{eq:lpstable} completely solves the (Hermite) sampling
problem. We note that a sampling inequality always leads to a general
reconstruction algorithm based on frame theory~\cite{DS52}. In
addition, for localized generators
 the frame algorithm converges even in the correct $L^p$-norm~\cite{CG04}.  Thus
\eqref{eq:lpstable} is also  a first step towards the numerical
treatment of the sampling problem.

Our objective is the characterization of sampling sets satisfying the
sampling inequality~\eqref{eq:lpstable} and to obtain sharp conditions
on the sampling set. In Beurling's tradition of complex analysis we
will  characterize sampling sets in terms of a weighted version of
Beurling's lower density
\begin{align}
\label{eq_bdens}
D^{-}(\Lambda,\ml) := \liminf_{r \rightarrow \infty}
\inf_{x \in \Rst} \frac{1}{2r} \sum_{\lambda \in \Lambda \cap [x-r,x+r]} \ml(\lambda).
\end{align}

Within this setting  we can already formulate our main result.
\begin{tm}
\label{th_samp_der_tp}
Let $g$ be a totally positive function of Gaussian type. Let $\Lambda
\subseteq \Rst$
be a separated set and let $m_\Lambda:\Lambda \to \bN$ be a multiplicity
function  such that $\sup_{\lambda \in \Lambda} \ml(\lambda) <\infty$.

(i)  If
$D^{-}(\Lambda, m_\Lambda) >1$, then $(\Lambda, m_\Lambda)$ is a sampling set for
$\sisp^p(g)$ for every $1\le p\le \infty$.

(ii) Conversely, if $(\Lambda, m_\Lambda)$ is a sampling set for
$\sisp^2(g)$, then $D^{-}(\Lambda, m_\Lambda) \geq 1$.
\end{tm}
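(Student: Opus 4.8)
The plan is to transfer the sampling problem into a question about multi-window Gabor frames, and then to settle the two density bounds by complex analysis (for the sufficiency) and by the general density theorem for frames (for the necessity). The bridge is the following reduction. Writing $f=\sum_{k\in\bZ}c_k\,g(\cdot-k)\in V^p(g)$, the samples are $f^{(j)}(\lambda)=\sum_k c_k\,g^{(j)}(\lambda-k)$, so the sampling functionals are assembled from the translates of the derivatives $g,g',\dots,g^{(M-1)}$, where $M=\sup_\lambda m_\Lambda(\lambda)<\infty$. Applying the Fourier transform and using the factored form of $\hat g$, and invoking the standard duality between integer-shift-invariant spaces and Gabor systems with integer frequency shifts, these functionals turn into the analysis coefficients of a \emph{multi-window Gabor system} whose windows are $g$ and its derivatives; for the Gaussian factor $e^{-c\xi^2}$ the relevant derivatives are, up to normalization, Hermite functions (this is the source of the "vectors of Hermite functions" in the abstract). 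I would first prove that $(\Lambda,m_\Lambda)$ is a sampling set for $V^p(g)$ \emph{if and only if} the associated multi-window Gabor system, carrying one window per requested derivative at each $\lambda$ and time nodes governed by $\Lambda$, is a frame for $L^2(\bR)$, the $L^p$ versions following from the localization of $g$. Under this correspondence $D^-(\Lambda,m_\Lambda)$ is exactly the per-unit-length count of time-frequency nodes, and the critical value $1$ is the Nyquist density.

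Granting the reduction, part (ii) is the easy direction: a frame cannot have sub-critical density, so the density theorem for multi-window Gabor frames translates to $D^-(\Lambda,m_\Lambda)\ge 1$. Concretely I would argue by comparison and concentration. Let $P_r$ denote multiplication by the indicator of $[-r,r]$ and let $S$ be the orthogonal projection onto $V^2(g)$. The operator $P_r S P_r$ has trace $\approx 2r$, since $V^2(g)$ is generated by the $\approx 2r$ translates $g(\cdot-k)$ with $k\in[-r,r]$, and the Gaussian localization of $g$ and of its dual generator controls the boundary error. A stable sampling inequality then forces the number of functionals meeting $[-r,r]$, namely $\sum_{\lambda\in\Lambda\cap[-r,r]}m_\Lambda(\lambda)$, to dominate this effective dimension up to a term of order $o(r)$; dividing by $2r$ and taking the liminf gives $D^-(\Lambda,m_\Lambda)\ge 1$.

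For part (i) I would first reduce the quantitative sampling inequality to a \emph{uniqueness} statement: if $D^-(\Lambda,m_\Lambda)>1$, then the only $f\in V^\infty(g)$ vanishing to order $m_\Lambda(\lambda)$ at every $\lambda\in\Lambda$ is $f\equiv 0$. The stable lower bound in \eqref{eq:lpstable} and \eqref{eq:h7} is recovered from uniqueness by a normal-families/compactness argument combined with the localization of $g$, which at the same time transfers the estimate to every $1\le p\le\infty$ (prove it at $p=\infty$ and propagate it through the Wiener-amalgam boundedness of the synthesis operator and of the dual generator). The uniqueness statement is where the totally positive structure is essential and where \emph{Jensen's formula} enters: using the factorization of $\hat g$ one extends the relevant transform of $f$ to an entire function of finite order whose zero set contains each $\lambda$ with multiplicity $m_\Lambda(\lambda)$; Jensen's formula bounds the number of zeros in $[-r,r]$ by $\approx 2r$, whereas the hypothesis forces at least $(1+\varepsilon)\,2r$ prescribed zeros there, a contradiction unless $f\equiv 0$. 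Total positivity additionally guarantees that the finite interpolation matrices built from $g$ and its derivatives are nonsingular, via a Descartes-type sign count, which is what upgrades the zero-counting contradiction into uniform, $p$-independent frame bounds.

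The main obstacle is part (i). Uniqueness alone is not enough: the real work is obtaining \emph{uniform} lower bounds, valid across the whole family of separated sets of density bounded below by $1+\varepsilon$ and independent of $p$. I expect the two delicate points to be (a) the precise zero-counting via Jensen's formula, keeping the growth of the entire extension under control through the Gaussian factor in $\hat g$, and (b) the nonsingularity and uniform conditioning of the totally positive interpolation matrices when several derivatives are sampled at nearby points, which is what lets the multiplicity $m_\Lambda$ be absorbed cleanly into the density count.
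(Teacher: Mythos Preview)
Your plan has the right architecture in places---necessity via a localized-frame density count, sufficiency via uniqueness plus a compactness upgrade---but there are two genuine gaps in part~(i) that would block the argument as written.

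\medskip

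\textbf{The Jensen step does not see real zeros.} You write that ``Jensen's formula bounds the number of zeros in $[-r,r]$ by $\approx 2r$''. Jensen's formula bounds the number of zeros of an entire function in a \emph{disk} $B(0,R)$ by the growth of $\log|f|$ on circles. For $f\in V^\infty(\phi_a)$ with Gaussian $\phi_a(x)=e^{-ax^2}$, the growth is $|f(x+iy)|\le C e^{ay^2}$, which gives a Jensen bound of order $aR^2/2$ on the zero count in $B(0,R)$. A set of \emph{real} zeros in $[-R,R]$ of density $1+\varepsilon$ contributes only $\approx (1+\varepsilon)2R$ zeros to that disk---far too few to contradict the quadratic Jensen bound. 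The paper's decisive observation (its Lemma~4.2) is that every real zero $\lambda$ of $f\in V^\infty(\phi_a)$, with its full multiplicity, propagates to a vertical string of complex zeros $\lambda+i\tfrac{\pi}{a}\bZ$; this comes from the identity $e^{-a(x+iy-k)^2}=e^{ay^2}e^{-2aixy}e^{2aiky}e^{-a(x-k)^2}$ and the fact that $e^{2aikl}=1$ for $l\in\tfrac{\pi}{a}\bZ$. With this, a real zero set of density $>1$ produces $\approx \nu a R^2$ complex zeros in $B(0,R)$ with $\nu>1$, and \emph{that} contradicts Jensen. Without the vertical propagation lemma, the zero-counting step fails.

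\medskip

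\textbf{The passage from Gaussian to totally positive of Gaussian type.} You propose to handle the rational factors $\prod_j(1+2\pi i\delta_j\xi)^{-1}$ via ``Descartes-type sign counts'' on interpolation matrices. That is not how the paper proceeds, and it is unclear how such an argument would control zeros with multiplicity in $V^\infty$. The paper instead observes that $\phi=\prod_j(I+\delta_j\tfrac{d}{dx})g$ is the Gaussian, so $h=\prod_j(I+\delta_j\tfrac{d}{dx})f\in V^\infty(\phi)$ whenever $f\in V^\infty(g)$; a Rolle-type lemma (Lemma~4.4) then shows $D^-(N_h,m_h)\ge D^-(N_f,m_f)$, reducing everything to the Gaussian case already handled. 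This transference is the second missing ingredient.

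\medskip

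Two smaller points. First, your proposed detour through multi-window Gabor frames reverses the paper's logic: there, the Gabor results are \emph{consequences} of the sampling theorem, not tools for proving it, so you would need an independent characterization of those Gabor frames (which for general TP windows of Gaussian type is not available a priori). Second, ``normal-families/compactness'' is the right intuition for upgrading uniqueness to a sampling inequality, but the precise mechanism is Beurling's weak-limit criterion: $(\Lambda,m_\Lambda)$ is sampling for $V^p(g)$ for all $p$ if and only if every weak limit of integer translates of $(\Lambda,m_\Lambda)$ is a uniqueness set for $V^\infty(g)$. This is what makes the density hypothesis, which is stable under weak limits, exactly the right input.
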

Theorem \ref{th_samp_der_tp} extends one of the results in \cite{grrost17} to sampling with
multiplicities. We also have an analogous density result for the shift-invariant space
generated by the hyperbolic secant.

\begin{tm}
\label{th_samp_der_sec}
Let $\psi(x)=\sech(a x)=\frac{2}{e^{a x}+e^{-a x}}$ be the hyperbolic secant. Let
$\Lambda \subseteq \Rst$
be a separated set and  $m_\Lambda $ be a multiplicity function  such
that $\sup_{\lambda \in \Lambda} \ml(\lambda) <\infty$.

(i) If
$D^{-}(\Lambda, m_\Lambda) >1$, then $(\Lambda, m_\Lambda)$ is a sampling set for
$\sisp^p(\psi)$ and every $1\le p\le \infty$.

(ii) Conversely, if $(\Lambda, m_\Lambda)$ is a sampling set for
$\sisp^2(\psi)$, then $D^{-}(\Lambda, m_\Lambda) \geq 1$.
\end{tm}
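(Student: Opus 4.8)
The plan is to follow the architecture of the proof of Theorem~\ref{th_samp_der_tp} for totally positive functions of Gaussian type, adapting each step to the three special features of $\psi=\sech(a\cdot)$. First, its Fourier transform $\hat\psi(\xi)=\tfrac{\pi}{a}\sech\!\big(\tfrac{\pi^2}{a}\xi\big)$ is again a rescaled hyperbolic secant. Second, the Gabor system $\cG(\psi,\alpha,\beta)$ is a frame exactly when $\alpha\beta<1$ (the theorem of Janssen and Strohmer). Third, since $\cosh(az)$ vanishes only on the lines $\mathrm{Im}\,z=\pm\tfrac{\pi}{2a}$, every $f\in\sisp^p(\psi)$ extends to a function analytic in the strip $S=\{z:\,|\mathrm{Im}\,z|<\tfrac{\pi}{2a}\}$, with uniform exponential decay along each horizontal line. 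These three facts play the roles that the variation-diminishing property and the order-two entire extension play in the totally positive case; note in particular that here one only gets analyticity in a strip, not in the whole plane.

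For part (i) I would first set up the transference between Hermite sampling and multi-window Gabor frames. Writing $f=\sum_k c_k\psi(\cdot-k)$, the sampling map $c\mapsto(f^{(j)}(\lambda))_{\lambda\in\Lambda,\,0\le j<\ml(\lambda)}$ has a pre-Gramian whose boundedness below is, by a Zak-transform/Poisson-summation duality, equivalent to a lower frame bound for the multi-window Gabor system generated by $\psi,\psi',\dots$ (equivalently, after Fourier transform, by the corresponding windows built from $\hat\psi$). The weighted density condition $D^-(\Lambda,\ml)>1$ translates into the oversampling condition for this system, so the lower bound follows from the sufficiency direction of the sech Gabor theory together with its multi-window extension; the upper bound is routine from separatedness of $\Lambda$, the hypothesis $\sup_\lambda\ml(\lambda)<\infty$, and the exponential decay of the $\psi^{(j)}$. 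Finally I would pass from $p=2$ to all $1\le p\le\infty$ using the exponential localization of $\psi$ and its derivatives, which places the relevant operators in a Wiener/Sj\"ostrand algebra and makes the frame property independent of $p$.

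For part (ii) I would argue by zero counting in the strip. A sampling set is in particular a uniqueness set with multiplicities: a nonzero $f\in\sisp^2(\psi)$ cannot vanish to order $\ml(\lambda)$ at every $\lambda\in\Lambda$, since then the middle term of the sampling inequality would vanish while $\norm{f}_2>0$. I would then transport the problem to a Hardy-type space on $S$, conformally mapping $S$ onto a half-plane, and apply Jensen's formula: a nonzero function in this class has at most $2r+o(r)$ zeros, counted with multiplicity, in any window $[x-r,x+r]$, uniformly in the center. This identifies the critical density of zero sets as exactly $1$ with the integer normalization. Consequently, if $D^-(\Lambda,\ml)<1$ the prescribed vanishing conditions fall below this budget on arbitrarily long intervals, so one can build a nonzero $f$ vanishing on $(\Lambda,\ml)$, contradicting stability; hence $D^-(\Lambda,\ml)\ge1$. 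The multiplicity weighting in~\eqref{eq_bdens} matches the counting of zeros with multiplicity, which is why separatedness and the uniform bound on $\ml$ suffice to make the estimate uniform.

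The main obstacle is the interface between the combinatorial density $D^-(\Lambda,\ml)$ and the analytic input, which is more delicate than in the totally positive case. In part (i) the Janssen--Strohmer theorem is stated for a single sech window on a lattice, so the crux is its multi-window, irregular-node analogue together with the exact matching of $D^->1$ to the Gabor oversampling threshold; this is precisely the Gabor-frame by-product advertised in the abstract. In part (ii) the difficulty is that, unlike the Gaussian-type case where one works with entire functions of order two in a Bargmann--Fock space, here analyticity is confined to the strip $S$; carrying out Jensen's formula in this conformally nontrivial geometry while keeping the normalization that yields the critical density exactly $1$ is the point where the sech-specific analysis is unavoidable.
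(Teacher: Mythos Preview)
Your plan has genuine gaps in both parts.

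For part (i), the reduction to multi-window Gabor frames is circular. The Janssen--Strohmer theorem covers only a single sech window over a \emph{lattice}; the ``multi-window, irregular-node analogue'' you call the crux is not a pre-existing tool but precisely what the paper derives \emph{from} Theorem~\ref{th_samp_der_sec} (see Theorem~\ref{th_gab_1}). The logic runs the other way: Theorem~\ref{tm_gab_con} reduces the semi-regular Gabor frame property to sampling in $\sisp^2(G)$, and the sampling theorem is established first. The paper proves (i) via Beurling's weak-limit characterization (Theorem~\ref{th_wl_tp}): every weak limit $(\Gamma,\mg)$ of $(\Lambda,\ml)$ still has $D^-(\Gamma,\mg)>1$ (Lemma~\ref{lemma_sep}), and one must show it is a uniqueness set for $\sisp^\infty(\psi)$. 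This reduces to the zero-density bound of Theorem~\ref{th_zeros_sec}. The analytic input is not analyticity on the strip $|\mathrm{Im}\,z|<\tfrac{\pi}{2a}$ but a \emph{meromorphic} extension to all of $\bC$ (Lemma~\ref{l3}), with simple poles on $\bZ+\tfrac{i\pi}{a}(\tfrac12+\bZ)$. The identity $\cosh\big(a(x+\tfrac{i\pi t}{a})\big)=(-1)^t\cosh(ax)$ forces $f(x+\tfrac{i\pi t}{a})=(-1)^t f(x)$, so each real zero of $f$ replicates along $\lambda+\tfrac{i\pi}{a}\bZ$ with the same multiplicity (Lemma~\ref{l4}); Jensen's formula for meromorphic functions then gives $D^-(N_f,m_f)\le 1$. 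A strip-only argument would not produce enough complex zeros for this counting.

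For part (ii), you need an interpolation theorem, not a zero-counting theorem. Jensen's formula yields the \emph{upper} bound $D^-(N_f,m_f)\le 1$ for nonzero $f$; it does not tell you that every $(\Lambda,\ml)$ with $D^-(\Lambda,\ml)<1$ is the zero set (with multiplicities) of some nonzero $f\in\sisp^2(\psi)$. Constructing such an $f$ is a separate and nontrivial step that your outline does not address. The paper avoids this entirely: Proposition~\ref{prop_nec} obtains $D^-(\Lambda,\ml)\ge 1$ from the general density theory of $\ell^1$-localized frames, using only that $\psi\in W_0^N(\bR)$ has stable integer shifts. No complex analysis enters the necessity direction.
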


For comparison, we state the corresponding  sampling result for the Paley-Wiener space
$$
\mathrm{PW}^2 = \{ f\in \ltwo : \supp \, \hat f \subseteq [-1/2,1/2]\}
\, .
$$
The statement  is analogous to Theorems \ref{th_samp_der_tp}
and \ref{th_samp_der_sec} and is considered folklore among complex
analysts (we tested it!).
\begin{tm}
\label{th_samp_der_pw}
Let $\Lambda \subseteq \Rst$
be a separated set and let $m_\Lambda$ be a multiplicity function  such that
$\sup_{\lambda \in \Lambda} \ml(\lambda) <\infty$.

(i) If
$D^{-}(\Lambda, m_\Lambda) >1$, then $(\Lambda, m_\Lambda)$ is a sampling set for
$\mathrm{PW}^2$.

(ii)  Conversely, if $(\Lambda, m_\Lambda)$ is a sampling set for
$\mathrm{PW}^2$, then $D^{-}(\Lambda, m_\Lambda) \geq 1$.
\end{tm}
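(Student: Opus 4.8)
The plan is to translate the problem into classical complex analysis. By the Paley--Wiener theorem, every $f\in\mathrm{PW}^2$ extends to an entire function of exponential type $\pi$ whose restriction to $\bR$ lies in $\ltwo$, and conversely; under this identification sampling a derivative $f^{(j)}(\lambda)$ amounts to reading off a scaled Taylor coefficient of such a function, so Hermite sampling becomes the theory of sampling sequences \emph{with multiplicities} for the Bernstein--Paley--Wiener space. The upper inequality is the easy half: since $\widehat{f^{(j)}}(\xi)=(2\pi i\xi)^j\hat f(\xi)$ is again supported in $[-1/2,1/2]$, each $f^{(j)}$ lies in $\mathrm{PW}^2$ with $\norm{f^{(j)}}_2\le\pi^{j}\norm{f}_2$, and the Plancherel--P\'olya inequality for separated sets, applied finitely many times because $\sup_\lambda\ml(\lambda)<\infty$, yields the uniform Bessel bound $\sum_\lambda\sum_{j<\ml(\lambda)}\abs{f^{(j)}(\lambda)}^2\le B\norm{f}_2^2$.

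For the sufficiency of $D^{-}(\Lambda,\ml)>1$ in (i), I would follow Beurling's weak-limit method. One topologizes the collection of separated sets-with-multiplicity by weak convergence of the associated divisors $\sum_\lambda\ml(\lambda)\delta_\lambda$, and proves a homogeneous approximation principle: $(\Lambda,\ml)$ is a sampling set for $\mathrm{PW}^2$ \emph{if and only if} every weak limit of the translates of $(\Lambda,\ml)$ is a \emph{uniqueness set}, i.e.\ no nonzero $f\in\mathrm{PW}^2$ vanishes to order $\ge m'(\gamma)$ at each point $\gamma$ of the limit. The density hypothesis enters through Jensen's formula: a nonzero entire function of exponential type $\pi$ that is bounded on $\bR$ has zeros of upper uniform density at most $1$. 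Hence any set-with-multiplicity whose lower uniform density exceeds $1$, a property inherited by all weak limits from $D^{-}(\Lambda,\ml)>1$, cannot be contained in the zero divisor of a nonzero $f$; every weak limit is therefore a uniqueness set, and the quantitative two-sided estimate (the $\mathrm{PW}^2$ analogue of \eqref{eq:lpstable}) follows.

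For the necessity in (ii) I would argue by contraposition. If $D^{-}(\Lambda,\ml)<1$, some weak limit $(\Gamma,m')$ has lower uniform density strictly below $1$, and one constructs, via a canonical Weierstrass product over the prescribed zeros $\gamma$ with multiplicities $m'(\gamma)$, a nonzero entire function of exponential type $\pi$ that is square-integrable on $\bR$, with membership and type again certified by Jensen's formula at the density threshold. Transported back through the weak-limit mechanism, this yields a sequence of normalized functions in $\mathrm{PW}^2$ whose Hermite samples tend to $0$, contradicting the lower sampling bound. Alternatively, and more robustly, I would invoke Landau's comparison argument: the functionals $f\mapsto f^{(j)}(\lambda)$ are represented by the reproducing vectors $\partial_y^jK(\cdot,y)|_{y=\lambda}$ of $\mathrm{PW}^2$, and comparing $\sum_{\lambda\in[x-r,x+r]}\ml(\lambda)$ with the trace of the concentration operator onto functions essentially supported in $[x-r,x+r]$, which equals $2r+o(r)$, forces $D^{-}(\Lambda,\ml)\ge1$.

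The main obstacle I anticipate is the weak-limit machinery carried out \emph{with multiplicities}. One must choose a topology in which separated sets-with-multiplicity of bounded multiplicity form a compact space, control how the weighted density $D^{-}$ behaves under passage to weak limits (where points may coalesce and multiplicities add), and, most delicately, extract \emph{uniform} sampling constants so that the qualitative equivalence ``sampling $\iff$ all weak limits are uniqueness sets'' upgrades to a genuine two-sided inequality with constants independent of $f$. Certifying that the canonical-product construction in the necessity half remains in $\ltwo$ with exactly type $\pi$ is the second point where the sharp form of Jensen's formula must be handled with care.
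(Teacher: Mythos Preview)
Your plan for (ii) via Landau's trace comparison is essentially what the paper does (Proposition~\ref{prop_nec}, phrased through the density theory of localized frames); the Weierstrass-product alternative would also work but is not needed.

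The gap is in (i). The Beurling weak-limit machinery does \emph{not} yield the equivalence you state for $\mathrm{PW}^2$. When the lower sampling inequality fails, one normalizes $f_n\in\mathrm{PW}^2$ with $\|f_n\|_2=1$, translates so that $|f_n(0)|\geq c>0$, and passes to a locally uniform limit; that limit is a nonzero entire function of exponential type $\pi$ which is \emph{bounded} on $\bR$, i.e.\ it lies in $\mathrm{PW}^\infty$, but there is no reason for it to lie in $L^2(\bR)$. So the honest characterization (Theorem~\ref{th_wl_pw} in the paper) is: $(\Lambda,m_\Lambda)$ is sampling for $\mathrm{PW}^\infty$ if and only if every weak limit is a uniqueness set for $\mathrm{PW}^\infty$. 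Your Jensen step (``bounded on $\bR$'') is already phrased consistently with this, but it only delivers sampling for $\mathrm{PW}^\infty$. The passage to $\mathrm{PW}^2$ that you take for granted is precisely the place where, in the shift-invariant setting $V^p(g)$, one invokes Sj\"ostrand's Wiener lemma to move between values of $p$; that tool is unavailable for Paley--Wiener because $\mathrm{sinc}\notin W_0(\bR)$, and the paper explicitly flags that only \emph{some} of the implications of Theorem~\ref{th_wl_tp} survive for bandlimited functions.

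The paper closes this gap with a dilation trick. From $D^-(\Lambda,m_\Lambda)>1$ choose $\alpha\in(0,1)$ with $D^-(\alpha^{-1}\Lambda,m_\Lambda)=\alpha\,D^-(\Lambda,m_\Lambda)>1$; the weak-limit argument (Theorem~\ref{th_wl_pw} together with Lemma~\ref{lemma_sep} and Theorem~\ref{th_uniqueness_pw}) shows $(\alpha^{-1}\Lambda,m_\Lambda)$ is sampling for $\mathrm{PW}^\infty$, and a classical oversampling fact due to Beurling (Proposition~\ref{prop_pw}) then says that compressing a $\mathrm{PW}^\infty$-sampling set by any factor $\alpha<1$ yields a $\mathrm{PW}^2$-sampling set, so $(\Lambda,m_\Lambda)$ samples $\mathrm{PW}^2$. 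This two-step route --- $\mathrm{PW}^\infty$ first, then dilate --- is the missing ingredient in your outline.
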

Although folklore, Theorem \ref{th_samp_der_pw}   does not seem to have been formulated
explicitly  in the literature.  A very
interesting  result involving divided differences of
samples was proved for the Bernstein space $\mathrm{PW}^\infty $ by Lyubarski
and Ortega-Cerda~\cite{LOC14}.
For the Fock space a   result similar to  Theorem \ref{th_samp_der_pw}
was derived early on  by Brekke and Seip \cite{bese93}.

 Theorems~\ref{th_samp_der_tp} and~\ref{th_samp_der_sec} have also several
consequences for Gabor systems.
Specifically, we characterize semi-regular sets $\Lambda \times
\beta\bZ$ that generate a multiwindow Gabor frame with respect to the
first $n$ Hermite functions or with respect to a specific  finite set of totally
positive functions. See Section~6 for the precise formulations.

In the literature most  sampling results for shift-invariant spaces
work with  the assumption that the sampling set $\Lambda $ is ``dense
enough''. However, when the sufficient density is made explicit, it is
usually  very far from the known
necessary density, even in dimension $1$. In fact,
until~\cite{grrost17} all authors use the  covering density or maximum
gap between samples,  and the density then depends  on some modulus of
continuity of the generator. See
~\cite{AF98} for one of the first nonuniform sampling theorems in
shift-invariant spaces,
\cite{Raz95} for nonuniform sampling with derivatives for bandlimited
functions, and~\cite{AGH17,SR16,selvan17} for more recent examples of
sufficient conditions for Hermite sampling in terms of the covering density.

In the light of \cite{grrost17} the sharp results for sampling with
derivatives are perhaps not surprising, but  they definitely go far
beyond the current state-of-the-art. Our main point is to show the
usefulness and power of the established methods, which consist of the
combination of Beurling's techniques, spectral invariance, complex
analysis, and the comparison of zero sets in different shift-invariant
spaces. We believe that these methods carry a high potential in many
other situations.

To arrive at sharp results, we combine several techniques.
Roughly, we proceed in three steps:

(i) We use Beurling's method of
weak limits and show that  the  sampling inequality
\eqref{eq:h7} for $p=\infty $ is equivalent to the fact that
every weak limit of integer translates  of $\Lambda $ is a uniqueness set for $V^\infty
(g)$. In this way we obtain a general characterization of sampling
sets \emph{without inequalities} (Theorem~\ref{th_wl_tp}).

(ii) To
switch between sampling inequalities for $p=\infty $ and $p<\infty $,
we use the theory of localized frames and Sj\"ostrand's beautiful
version of Wiener's Lemma for convolution-dominated
matrices~\cite{sj95}.
These two steps are part of a general mathematical formalism that can
be applied to many different situations. In particular, they work for
shift-invariant spaces with almost arbitrary generators.

(iii) The concrete understanding then rests on the analysis  of
uniqueness sets for a particular shift-invariant space $\sisp ^p(g)$, or in other
words, we need to analyze the zero sets of arbitrary functions in
$\sisp ^p(g)$. For instance, for the classical Paley-Wiener space this
is the
relation between  the
density of the zero set of an entire function and  its growth. This is
precisely the aspect where we develop new arguments.
Firstly, we observe that every function in $V^p(\phi )$ for  a
Gaussian generator $\phi $
possesses  an extension to an entire function, and secondly,
we can  relate the  real zeros of
some $f\in \sisp ^p(\phi )$ to  the  complex zeros of its analytic
  extension. A similar, but technically more involved  strategy works
  for the hyperbolic secant $\psi (x) = (e^{ax} - e^{-ax})\inv $.   In
a final step we relate the zero sets of functions in \emph{different}
shift-invariant spaces to each other. In this way we develop a direct
line of arguments and avoid the detour in\cite{grrost17}  via the characterization of
Gaussian Gabor frames.

The paper is organized as follows: Section~2 introduces the necessary
definitions for sampling in vector-valued shift-invariant
spaces. These provide a convenient language to formulate the problem of
sampling with derivatives. Section~3 then contains the main structural
characterization of sampling with derivatives and the necessary
density condition (Proposition~\ref{prop_nec}). Section~4 is devoted
to the investigation of the density of zero sets in shift-invariant
spaces. This is the part that contains the main arguments and new proof
ideas. The proofs of Theorems \ref{th_samp_der_tp}, \ref{th_samp_der_sec}, and \ref{th_samp_der_pw}
are then in Section \ref{sec_proofs}. In Section~6 we draw some
consequences of the sampling theorems with derivatives for
multi-window  Gabor frames. Finally Section 7 contains some of the
postponed proofs of the structural results in Sections~2 and ~3. As
these are essentially known, we explain only the necessary
modifications.

\section{Vector-valued shift-invariant spaces and sampling}
\subsection{Vector-valued shift-invariant spaces}
The treatment of sampling with derivatives requires us to formulate
several standard concepts for vector-valued functions. In this
section, we collect the precise definitions.
For the proper formulation of sampling results we make use of the Wiener amalgam space $W_0=W_0(\Rst)$, which consists of
continuous functions $g$ such that
\begin{align*}
     \|g\|_{W} := \sum_{k\in\bZ} \max_{x\in [k,k+1]} |g(x)| <\infty.
\end{align*}

Let $G=(G^1, \ldots, G^N) \in (W_0(\Rst))^N$. We consider the
vector-valued shift-invariant space
\begin{align}
\sisp^p(G) := \left\{ \sum_{k \in \Zst} c_k G(\cdot-k): c \in \ell^p(\Zst) \right\}
\end{align}
as a subspace of $(L^p(\Rst))^N$ with norm
\begin{align*}
\norm{(F^1, \ldots, F^N)}_p := \left(\sum_{j=1}^N \norm{F^j}^p_p\right)^{1/p}, \qquad 1 \leq
p < \infty,
\end{align*}
and $\norm{(F^1, \ldots, F^N)}_\infty = \max_{j=1,\ldots,N} \norm{F^j}_\infty$.
We always assume that $G$ has stable integer shifts, i.e.
\begin{align}
\label{eq_vec_riesz}
\bignorm{\sum_{k \in \Zst} c_k G(\cdot-k)}_p \asymp \norm{c}_p.
\end{align}

\subsection{Sampling and weak limits}
We consider tuples of sets $\vec{\Lambda}=(\Lambda^1, \ldots, \Lambda^N)$ with $\Lambda^j
\subseteq \Rst$.
We say that $\vec{\Lambda}$ is a \emph{sampling set for} $\sisp^p(G)$,
$1\leq p \leq \infty $, if
\begin{align}
\label{eq_vec_samp}
\norm{F}_p \asymp \left(\sum_{j=1}^N
  \norm{F^j|\Lambda^j}^p_p\right)^{1/p} = \Big( \sum _{j=1}^N \sum
  _{\lambda \in \Lambda ^j} |F^j(\lambda )|^p\Big)^{1/p} , \qquad  \text{ for all }  F \in \sisp^p(G).
\end{align}
For $p=\infty$ the condition reads as $\norm{F}_\infty \asymp \max_{j=1,\ldots,N}
\norm{F^j|\Lambda ^j}_\infty$. We say that $\vec{\Lambda}$ is a \emph{uniqueness set for}
$\sisp^p(G)$ if whenever $F \in \sisp^p(G)$ is such that $F^j \equiv 0$ on $\Lambda^j$,
for all $j=1, \ldots, N$, then $F \equiv 0$. Clearly, sampling sets are also uniqueness sets.
\medskip

We first recall Beurling's notion of a weak limit of a sequence of sets.
A sequence $\{\Lambda_n: n \geq 1\}$ of subsets of $\bR$ is said to \emph{converge weakly}
to a set $\Lambda \subseteq \bR$,
denoted $\Lambda_n \weakconv \Lambda$, if for every open bounded interval $(a,b)$ and
every $\varepsilon >0$, there exist $n_0 \in \bN$ such that for all $n \geq n_0$
\begin{align*}
\Lambda_n \cap (a,b) \subseteq \Lambda + (-\varepsilon,\varepsilon)
\mbox { and }
\Lambda \cap (a,b) \subseteq \Lambda_n + (-\varepsilon,\varepsilon).
\end{align*}

We let $\WZ(\Lambda)$ denote the class of all sets $\Gamma$ that can be obtained as weak
limits of integer translates of $\Lambda$, i.e., $\Gamma \in \WZ(\Lambda)$ if
there exists a sequence $\{k_n: n \geq 1\} \subseteq \mathbb{Z}$
such that $\Lambda + k_n \weakconv \Gamma$. We extend this notion to tuples of sets as
follows.
Given two $N$-tuples of sets $\vec\Lambda=(\Lambda^1,\ldots,\Lambda^N)$
and $\vec\Gamma=(\Gamma^1, \ldots, \Gamma^N)$, we say that
$\vec\Gamma \in \WZ(\vec\Lambda)$ if there exists a sequence $\{k_n: n \geq 1\} \subseteq
\mathbb{Z}$
such that $\Lambda^j + k_n \weakconv \Gamma^j$ for all $1 \leq j \leq N$.
(Note that the limits involve \emph{the same sequence} $\{k_n: n \geq 1\}$ for all $j$.)

The following is a vector-valued extension of \cite[Theorem 3.1]{grrost17}.
\begin{tm}
\label{th_wl_vec}
Let $G=(G^1, \ldots, G^N) \in (W_0(\Rst))^N$ have stable integer shifts and
let $\vec{\Lambda}=(\Lambda^1, \ldots, \Lambda^N)$ be a tuple of separated sets.
Then the following are equivalent.
\begin{itemize}
\item[(a)] $\vec\Lambda$ is a sampling set for $\sisp^p(G)$ for some $p \in
[1,\infty]$.
\item[(b)] $\vec\Lambda$ is a sampling set for $\sisp^p(G)$  for all $p \in
[1,\infty]$.
\item[(c)] Every weak limit $\vec\Gamma \in \WZ(\vec\Lambda)$ is a sampling
set
for $\sisp^\infty(G)$.
\item[(d)] Every weak limit $\vec\Gamma \in \WZ(\vec\Lambda)$ is a set of
uniqueness for $\sisp^\infty(G)$.
\end{itemize}
\end{tm}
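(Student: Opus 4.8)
The plan is to establish the cycle of implications (a) $\Rightarrow$ (b) $\Rightarrow$ (c) $\Rightarrow$ (d) $\Rightarrow$ (a), following Beurling's weak-limit philosophy as in the scalar case \cite{grrost17} and carrying along the $N$ components with a single, synchronized shift sequence. For (a) $\Rightarrow$ (b) I would invoke the theory of localized frames together with Sj\"ostrand's Wiener-type lemma \cite{sj95}. Since each $G^j \in W_0(\Rst)$ and each $\Lambda^j$ is separated, the (cross-)Gram matrix obtained by composing the synthesis map $c \mapsto \sum_k c_k G(\cdot-k)$ with restriction to $\vec\Lambda$ is convolution-dominated. A sampling inequality for one exponent $p$ means this matrix is bounded below on the corresponding $\ell^p$; spectral invariance of the Sj\"ostrand class then transfers the lower bound to every $\ell^q$, so the sampling inequality holds for all $p \in [1,\infty]$. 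This is a pure localization step, and it is exactly where the amalgam hypothesis $G \in (W_0)^N$ and separatedness are indispensable.

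The implication (b) $\Rightarrow$ (c) rests on the fact that integer shifts act on $\sisp^\infty(G)$ by shifting coefficients, hence isometrically; therefore each translate $\vec\Lambda + k_n$ is a sampling set for $\sisp^\infty(G)$ with the \emph{same} constants as $\vec\Lambda$. Passing to a weak limit $\vec\Gamma$, I would recover the sampling inequality for $\vec\Gamma$ by approximating any $F \in \sisp^\infty(G)$ and any sampling point of $\vec\Gamma$ by nearby points of $\vec\Lambda + k_n$, exploiting the local uniform equicontinuity of functions in $\sisp^\infty(G)$ (again a consequence of $G \in (W_0)^N$). The direction (c) $\Rightarrow$ (d) is immediate, since a sampling set is a fortiori a uniqueness set.

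The core of the argument is (d) $\Rightarrow$ (a), which I would prove by contraposition through a normal-families argument; note that it suffices to produce $\sisp^\infty(G)$-sampling, which is the case $p=\infty$ of (a). Assume $\vec\Lambda$ is not a sampling set for $\sisp^\infty(G)$. Since the upper bound in \eqref{eq_vec_samp} is trivial for $p=\infty$, only the lower bound can fail, so there is a sequence $F_n \in \sisp^\infty(G)$ with $\norm{F_n}_\infty = 1$ but $\max_j \sup_{\lambda \in \Lambda^j}\abs{F_n^j(\lambda)} \to 0$. Choosing integers $k_n$ so that $F_n$ nearly attains its supremum norm near the origin after the shift, I pass to $F_n(\cdot - k_n)$: their coefficient sequences are bounded in $\ell^\infty(\bZ)$, so after extracting a subsequence they converge coordinatewise, and the associated functions converge locally uniformly (a dominated-convergence argument using $\norm{\cdot}_W$) to some $F \in \sisp^\infty(G)$ with $F \not\equiv 0$. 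Along the same subsequence $\vec\Lambda + k_n \weakconv \vec\Gamma$ for some $\vec\Gamma \in \WZ(\vec\Lambda)$: each $\Lambda^j$ being separated, its integer translates admit weakly convergent subsequences by Beurling's compactness, and finitely many successive extractions yield one sequence $\{k_n\}$ serving all $N$ components at once. Combining the decay of the samples with the weak convergence of the sets and the local uniform convergence of the functions shows $F^j \equiv 0$ on $\Gamma^j$ for every $j$, so $\vec\Gamma$ is a weak limit that is not a uniqueness set, contradicting (d).

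I expect the main obstacle to lie precisely in this last step: the simultaneous recentering that keeps the limit function $F$ nonzero while synchronizing the weak limits of all $N$ sets under a single shift sequence, and the verification that ``vanishing in the limit'' survives the interchange of the three limiting processes (samples tending to $0$, sets tending to $\vec\Gamma$, functions tending to $F$). Both difficulties are resolved by the uniform equicontinuity and tail control furnished by the condition $G \in (W_0(\Rst))^N$, which is what makes the normal-families extraction legitimate and the pointwise vanishing on $\vec\Gamma$ pass cleanly to the limit.
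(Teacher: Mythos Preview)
Your proposal is correct and follows essentially the same route as the paper: Sj\"ostrand's Wiener-type lemma for the $p$-independence, and Beurling's weak-limit/normal-families argument (carried componentwise with a single shift sequence) for the remaining implications, exactly as in the scalar case of \cite[Theorem~3.1]{grrost17}. The only concrete detail the paper adds beyond your sketch is the device for applying the Sj\"ostrand lemma in the vector-valued setting: it realizes the index set $I=\{(\lambda,j):\lambda\in\Lambda^j\}$ as a relatively separated subset of $\Rst^2$ (and $\Zst$ as $\Zst\times\{0\}\subset\Rst^2$), so that \cite[Proposition~A.1]{grrost17} applies verbatim to the sampling matrix $A_{(\lambda,j),k}=G^j(\lambda-k)$.
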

The proof is similar to the scalar-valued version; a sketch of the
proof is given in Section \ref{sec_post}.

\section{Sampling with multiplicities}
\subsection{Sets with multiplicities and derivatives}
For $N \in \Nst$ we let $W^N_0=W^N_0(\Rst)$ be the class of functions $g$ having
derivatives up to order $N-1$ in $W_0(\Rst)$.
 For a set with multiplicity $(\Lambda,\ml)$, we define its height
as $\sup_\lambda m_\Lambda(\lambda)$. When sampling in shift-invariant spaces with generators on $W^N_0(\Rst)$ we assume
that the sampling sets have height $\leq N$. The lower density of $(\Lambda,\ml)$ is defined by
\eqref{eq_bdens}.

\subsection{Sampling with derivatives}
We now describe how the problem of sampling with multiplicities can be reformulated in terms of sampling of
vector-valued functions.

Let a generator $g \in W^N_0(\Rst)$ with stable integer
shifts be given. We define $G \in \left(W_0(\Rst)\right)^N$ by choosing as components the
derivatives of $g$, so
\[G=(g,g^{(1)}, \ldots, g^{(N-1)}).\]
There is an obvious one-to-one correspondence between
$f=\sum_k c_k g(\cdot-k)\in \sisp^p(g)$ and
$F=(f,f^{(1)}, \ldots, f^{(N-1)})\in \sisp^p(G)$. In addition, since $g$ has stable integer shifts, we have the norm equivalence
$$   \|f\|_p \asymp \|c\|_p .$$
Furthermore, since $g^{(j)}\in W_0(\Rst)$ for $1\le j\le N-1$, there is a constant $ B>0$  such that
$$ \|f^{(j)}\|_p \le B\|c\|_p\quad \mbox{for}\quad 1\le j\le N-1,$$
and this implies
$$   \|f\|_p \asymp \|c\|_p \asymp \|F\|_p.$$
This shows that $G$ has stable
integer shifts in the sense of \eqref{eq_vec_riesz}.

Second, given a set with multiplicity $(\Lambda,\ml)$
and height at most $N <\infty$,
we consider the tuple sets
$\vec\Lambda=(\Lambda^1, \ldots, \Lambda^N)$ given by
\begin{align*}
\Lambda^k := \left\{\lambda \in \Lambda: \ml(\lambda) \geq k \right\}.
\end{align*}
Note that $\Lambda^1=\Lambda$. The connection between vector-valued
sampling and sampling with derivatives is stated  in the following
lemma, which is a direct consequence of our notation.
\begin{lemma}
A set with multiplicity $(\Lambda,\ml)$ and height at most $N<\infty$
is a sampling set
for $\sisp^p(g)$ in the sense of \eqref{eq:lpstable},  if and only if
$\vec\Lambda=(\Lambda^1,\ldots,\Lambda^N)$ is a sampling set for $\sisp^p(G)$, with $G=(g,g^{(1)}, \ldots, g^{(N-1)})$.
\end{lemma}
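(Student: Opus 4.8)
The plan is to reduce the lemma to a single combinatorial identity, since the analytic content has already been supplied in the discussion preceding the statement: the map $f\mapsto F=(f,f^{(1)},\ldots,f^{(N-1)})$ is a bijection between $\sisp^p(g)$ and $\sisp^p(G)$, and it satisfies the norm equivalence $\|f\|_p\asymp\|F\|_p$. Consequently it suffices to verify that, for each $f$ and its associated $F$, the middle term of the sampling inequality \eqref{eq:lpstable} is literally equal to the $p$-th power of the vector-valued sampling functional appearing in \eqref{eq_vec_samp}. Once this equality of functionals is in hand, the norm equivalence transfers the two-sided bounds from either formulation to the other.

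First I would fix the indexing conventions. Writing $G^j=g^{(j-1)}$, and hence $F^j=f^{(j-1)}$ for $1\le j\le N$, the vector-valued sampling quantity for $\vec\Lambda$ becomes $\sum_{j=1}^N\sum_{\lambda\in\Lambda^j}|f^{(j-1)}(\lambda)|^p$ (and the corresponding $\max$/$\sup$ expression when $p=\infty$). Substituting $i=j-1$ and recalling that $\Lambda^{i+1}=\{\lambda\in\Lambda:\ml(\lambda)\ge i+1\}$, the membership $\lambda\in\Lambda^{i+1}$ is equivalent to $i\le\ml(\lambda)-1$. Because the height bound guarantees $\ml(\lambda)\le N$ for every $\lambda$, I may interchange the two finite sums (a plain Fubini rearrangement) to obtain
\begin{align*}
\sum_{j=1}^N\sum_{\lambda\in\Lambda^j}|f^{(j-1)}(\lambda)|^p
=\sum_{\lambda\in\Lambda}\sum_{i=0}^{\ml(\lambda)-1}|f^{(i)}(\lambda)|^p,
\end{align*}
which is exactly the middle expression in \eqref{eq:lpstable}. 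For $p=\infty$ the same index bijection turns the double maximum $\max_{1\le j\le N}\sup_{\lambda\in\Lambda^j}|f^{(j-1)}(\lambda)|$ into $\sup_{\lambda\in\Lambda}\max_{0\le i\le\ml(\lambda)-1}|f^{(i)}(\lambda)|$, matching \eqref{eq:h7}.

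With the two sampling functionals identified, the equivalence follows at once: since $\|f\|_p\asymp\|F\|_p$, a two-sided bound of the functional by $\|f\|_p^p$ holds if and only if the corresponding bound by $\|F\|_p^p$ holds, with constants adjusted only by the norm-equivalence factors. I expect no genuine obstacle in this argument; the only point requiring care is the off-by-one bookkeeping between the $0$-based derivative index in \eqref{eq:lpstable} and the $1$-based component index in the definition of $G$, together with the observation that the height restriction $\sup_\lambda \ml(\lambda)\le N$ is precisely what ensures that every derivative $f^{(i)}$ occurring in \eqref{eq:lpstable} corresponds to a genuine component $F^{i+1}$ of $F$, so that no terms are lost or double-counted.
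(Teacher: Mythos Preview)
Your proposal is correct and matches the paper's own treatment: the paper does not give a proof at all, stating only that the lemma ``is a direct consequence of our notation.'' You have simply written out that direct consequence in full, and your index-swap identity together with the norm equivalence $\|f\|_p\asymp\|F\|_p$ established just before the lemma is exactly what is needed.
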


Finally, we interpret a weak limit $\vec\Gamma \in \WZ(\vec\Lambda)$ as a set with
multiplicity by setting
$\Gamma := \Gamma^1$ and
\begin{align*}
\mg(\gamma) := \max\{j \in \Nst: \gamma \in \Gamma^j\}, \qquad  \gamma \in \Gamma.
\end{align*}
In order to keep our notations consistent, we also write  $(\Gamma,\mg)\in \WZ(\Lambda,\ml)$  for the current situation.

For \emph{separated sets} $\Lambda$,  i.e., $\inf\{\abs{\lambda-\lambda'}: \lambda, \lambda'
\in \Lambda, \lambda \not= \lambda'\}>0$,  we have the following alternative description of
weak convergence.
\begin{prop}
\label{prop_wstar}
Let $(\Lambda, \ml)$ be a separated set with multiplicity and finite height $N$,
let $(\Gamma, \mg)$ be a set with multiplicity, and $\{k_n: n \geq 1\} \subseteq \Zst$. Then
$\Lambda^j-k_n \weakconv \Gamma^j$, as $n \longrightarrow \infty$ for all $j=1,\ldots,N$
if and only if
\begin{align*}
\sum_{\lambda \in \Lambda} \ml(\lambda) \delta_{\lambda-k_n} \longrightarrow
\sum_{\gamma \in \Gamma} \mg(\gamma) \delta_\gamma,
\mbox{ as }n \longrightarrow \infty,
\end{align*}
in the $\sigma(C^*_c,C_c)$ topology (where $C_c$ denotes the class of continuous
functions with compact support).
\end{prop}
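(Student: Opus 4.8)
The plan is to reduce the statement to the standard equivalence between Beurling's weak convergence of uniformly separated sets and vague convergence of their counting measures, and then to disentangle the weighted measure into its level sets. Write $\eta>0$ for the separation constant of $\Lambda$, so that each level set $\Lambda^j$ and each translate $\Lambda^j-k_n$ is $\eta$-separated. For a discrete set $S$ denote by $\nu_S=\sum_{s\in S}\delta_s$ its counting measure. Since $\lambda\in\Lambda^j$ precisely when $\ml(\lambda)\ge j$, every $\lambda$ is counted in exactly $\ml(\lambda)$ of the sets $\Lambda^1,\dots,\Lambda^N$, and likewise for $\Gamma$. Hence the two measures appearing in the statement split as
\begin{equation*}
\sum_{\lambda\in\Lambda}\ml(\lambda)\,\delta_{\lambda-k_n}=\sum_{j=1}^N\nu_{\Lambda^j-k_n},
\qquad
\sum_{\gamma\in\Gamma}\mg(\gamma)\,\delta_\gamma=\sum_{j=1}^N\nu_{\Gamma^j}.
\end{equation*}

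For the forward implication I would invoke the standard fact that, for uniformly $\eta$-separated sets, $\Lambda^j-k_n\weakconv\Gamma^j$ implies $\nu_{\Lambda^j-k_n}\to\nu_{\Gamma^j}$ in the $\sigma(C_c^*,C_c)$ topology: given $\phi\in C_c$, weak convergence together with a choice $\varepsilon<\eta/3$ produces, on the support of $\phi$, a bijective $\varepsilon$-matching $\lambda\leftrightarrow\gamma$ between the two sets for large $n$, and uniform continuity of $\phi$ then forces $\sum\phi(\lambda)\to\sum\phi(\gamma)$, the number of relevant terms being bounded by separation. Summing these $N$ convergences yields the vague convergence of the weighted measures.

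The substance of the proof is the converse, where the vague convergence of the \emph{sum} must be untangled into convergence of each level set. Assume $\mu_n:=\sum_{j}\nu_{\Lambda^j-k_n}\to\mu:=\sum_j\nu_{\Gamma^j}$ vaguely. I would first record that $\mu_n$ is a nonnegative integer measure carried by the $\eta$-separated set $\Lambda-k_n$, with mass $\ml(\lambda)\in\{1,\dots,N\}$ at each atom; in particular $\mu_n(I)\le N$ for every interval $I$ of length $<\eta$. Testing $\mu$ against bump functions then shows that $\Gamma$ is itself $\eta$-separated (two atoms of $\mu$ at distance $<\eta$ would force, for large $n$, two atoms of $\Lambda-k_n$ at distance $<\eta$, contradicting separation) and that each mass $\mg(\gamma)$ lies in $\{1,\dots,N\}$. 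The key localization step is this: for $\gamma\in\Gamma$ and a bump $\phi$ with $\phi\equiv1$ near $\gamma$ and $\supp\phi$ of length $<\eta$, separation forces $\mu_n$ to have, for large $n$, a single atom $\lambda_n-k_n$ in $\supp\phi$, so that $\int\phi\,d\mu_n=\ml(\lambda_n)\to\int\phi\,d\mu=\mg(\gamma)$; since the masses are integers, $\ml(\lambda_n)=\mg(\gamma)$ eventually and $\lambda_n-k_n\to\gamma$. Conversely, a bump supported away from $\Gamma$ shows that every atom of $\mu_n$ in a fixed bounded interval lies within $\varepsilon$ of $\Gamma$ for $n$ large.

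Combining the two directions produces, on each bounded interval and for large $n$, a mass-preserving $\varepsilon$-matching between the atoms of $\Lambda-k_n$ and those of $\Gamma$: each $\lambda-k_n$ is paired with a unique $\gamma$ at distance $<\varepsilon$ with $\ml(\lambda)=\mg(\gamma)$, and conversely. Since $\lambda\in\Lambda^j\Leftrightarrow\ml(\lambda)\ge j\Leftrightarrow\mg(\gamma)\ge j\Leftrightarrow\gamma\in\Gamma^j$, this matching restricts to an $\varepsilon$-matching between $\Lambda^j-k_n$ and $\Gamma^j$ for every $j$, which is exactly the pair of inclusions defining $\Lambda^j-k_n\weakconv\Gamma^j$. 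I expect the main obstacle to be precisely this disentangling in the converse direction: vague convergence of the total measure does not by itself control the individual level sets, and it is the combination of uniform separation (which caps the number of atoms per interval) with the integrality and uniform boundedness of the multiplicities (which makes matched masses eventually \emph{equal}, not merely close) that makes the argument go through.
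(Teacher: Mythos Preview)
Your proof is correct. The forward direction matches the paper's. For the converse, the paper proceeds differently: instead of building a mass-preserving $\varepsilon$-matching directly, it first cites a lemma from \cite{grorro15} to deduce that vague convergence of the weighted measures implies weak convergence of their supports, so $\Lambda-k_n=\supp(\mu_n)\weakconv\supp(\mu)=\Gamma$, settling the case $j=1$. It then applies the already-proved forward direction to subtract the unweighted counting measures, obtaining
\[
\sum_{\lambda\in\Lambda}(\ml(\lambda)-1)\,\delta_{\lambda-k_n}\longrightarrow\sum_{\gamma\in\Gamma}(\mg(\gamma)-1)\,\delta_\gamma;
\]
this is the original problem for the set $\Lambda^2$ with multiplicity $\ml-1$ and height $N-1$, and the argument closes by induction on the height. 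Your route is more self-contained, since it reproves the support-convergence step by hand and extracts the equality of matched masses in one shot, delivering all level sets simultaneously; the paper's inductive peeling is slicker once the external lemma is granted. Both arguments hinge on exactly the two features you single out---uniform separation and integrality of the multiplicities.
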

A proof of Proposition \ref{prop_wstar} is given in Section \ref{sec_post}.
As a consequence, we obtain the following lemma; see, e.g.
\cite[Lemma 7.1]{grrost17} for a proof without multiplicities.
\begin{lemma}
\label{lemma_sep}
Let $(\Lambda, \ml)$ be a separated set with multiplicity and finite height,
and let $(\Gamma, \mg) \in \WZ(\Lambda, \ml)$. Then
$D^{-}(\Gamma, \mg) \geq D^{-}(\Lambda, \ml)$.
\end{lemma}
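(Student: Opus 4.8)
The plan is to restate the claim in the language of weighted counting measures and then transport a uniform lower-density bound across the weak-$*$ limit furnished by Proposition~\ref{prop_wstar}. Write $\mu_n := \sum_{\lambda \in \Lambda} \ml(\lambda)\,\delta_{\lambda-k_n}$ for the measures attached to the integer-translated data and $\nu := \sum_{\gamma \in \Gamma}\mg(\gamma)\,\delta_\gamma$ for the limiting measure; by Proposition~\ref{prop_wstar} the hypothesis $(\Gamma,\mg)\in\WZ(\Lambda,\ml)$ is precisely the statement that $\mu_n \to \nu$ in $\sigma(C_c^*,C_c)$. The elementary observation driving everything is that the lower bound implicit in \eqref{eq_bdens} is uniform in the center and hence survives translation: setting $D^- := D^{-}(\Lambda,\ml)$ and fixing $\varepsilon>0$, the definition of the $\liminf$ and of the infimum over $x$ produces an $r_0>0$ such that
\begin{align*}
\mu_n\big([a,b]\big) = \sum_{\lambda \in \Lambda \cap [a+k_n,\,b+k_n]} \ml(\lambda) \;\geq\; (D^- - \varepsilon)\,(b-a)
\end{align*}
for every $n$ and every interval $[a,b]$ with $b-a \geq 2r_0$.

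First I would fix an interval $[x-r,x+r]$ with $r>r_0$ and, for small $\eta \in (0, r-r_0)$, choose a continuous $\phi$ with $0 \leq \phi \leq \mathbf 1_{[x-r,x+r]}$, $\phi \equiv 1$ on $[x-r+\eta, x+r-\eta]$, and $\supp \phi \subseteq [x-r,x+r]$. Applying the uniform bound on the interval $[x-r+\eta,x+r-\eta]$ gives
\begin{align*}
\int \phi \, d\mu_n \;\geq\; \mu_n\big([x-r+\eta,x+r-\eta]\big) \;\geq\; (D^- - \varepsilon)\,2(r-\eta)
\end{align*}
for all $n$. Since $\phi$ is a fixed element of $C_c$, weak-$*$ convergence then yields
\begin{align*}
\nu\big([x-r,x+r]\big) \;\geq\; \int \phi \, d\nu \;=\; \lim_{n\to\infty}\int \phi \, d\mu_n \;\geq\; (D^- - \varepsilon)\,2(r-\eta).
\end{align*}
Letting $\eta \to 0$ gives $\nu([x-r,x+r]) \geq (D^- - \varepsilon)\,2r$, valid for every $x \in \Rst$ and every $r>r_0$.

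Finally I would take the infimum over $x$ and the $\liminf$ over $r\to\infty$ to conclude $D^{-}(\Gamma,\mg) \geq D^- - \varepsilon$, and then let $\varepsilon \to 0$. The one genuinely delicate point is exactly this transfer across the limit: because $\mu_n \to \nu$ only tests against continuous compactly supported functions, one cannot evaluate $\nu$ on the indicator $\mathbf 1_{[x-r,x+r]}$ directly and must sandwich it between continuous functions, absorbing a loss of $\eta$ at each endpoint that is removed afterwards. I would also note that the finite height of $(\Lambda,\ml)$ together with the separation of $\Lambda$ forces the $\mu_n$ to have uniformly bounded mass on bounded sets, so that $\nu$ is a bona fide Radon measure and the limit above is finite and legitimate; no inequality in the opposite direction (which would require an upper-density control that weak limits need not preserve) is needed here.
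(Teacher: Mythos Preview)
Your proposal is correct and follows essentially the same route as the paper: the paper does not spell out a proof but presents Lemma~\ref{lemma_sep} as a consequence of Proposition~\ref{prop_wstar} (referring to \cite[Lemma~7.1]{grrost17} for the case without multiplicities), and your argument is precisely the standard implementation of that idea---pass to the weighted counting measures, use the translation-invariance of the lower-density bound to get a uniform estimate for all $\mu_n$, and push it through the $\sigma(C_c^*,C_c)$ limit via a continuous minorant of the indicator.
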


\subsection{Characterization of sampling with derivatives}
Theorem \ref{th_wl_vec} can be recast in terms of sampling with derivatives.
\begin{tm}
\label{th_wl_tp}
Let $g \in W^N_0(\Rst)$ have stable integer shifts and
let $(\Lambda, m_\Lambda)$ be a separated set with multiplicity and height at most $N < \infty$.
Then the following are equivalent.
\begin{itemize}
\item[(a)] $(\Lambda, m_\Lambda)$ is a sampling set for $\sisp^p(g)$ for some $p \in
[1,\infty]$.
\item[(b)] $(\Lambda, m_\Lambda)$ is a sampling set for $\sisp^p(g)$ for all $p \in
[1,\infty]$.
\item[(c)] Every weak limit $(\Gamma, \mg) \in \WZ(\Lambda, m_\Lambda)$ is a sampling
set
for $\sisp^\infty(g)$.
\item[(d)] Every weak limit $(\Gamma, \mg) \in \WZ(\Lambda, m_\Lambda)$ is a set of
uniqueness for $\sisp^\infty(g)$.
\end{itemize}
\end{tm}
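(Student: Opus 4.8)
The plan is to translate the statement entirely into the vector-valued setting and to invoke Theorem~\ref{th_wl_vec}. All four conditions of the present theorem should be placed in one-to-one correspondence with the four conditions of Theorem~\ref{th_wl_vec}, applied to the vector generator $G=(g,g^{(1)},\ldots,g^{(N-1)})$ and the tuple $\vec\Lambda=(\Lambda^1,\ldots,\Lambda^N)$ with $\Lambda^k=\{\lambda\in\Lambda:\ml(\lambda)\ge k\}$. As recorded above, $G\in(W_0(\Rst))^N$ has stable integer shifts in the sense of \eqref{eq_vec_riesz}, so Theorem~\ref{th_wl_vec} is applicable; it therefore suffices to match each item through this dictionary.

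First I would dispose of the sampling items (a) and (b). By the lemma identifying $\sisp^p(g)$ with $\sisp^p(G)$ through the correspondence $f\leftrightarrow F=(f,f^{(1)},\ldots,f^{(N-1)})$, the set with multiplicity $(\Lambda,\ml)$ is a sampling set for $\sisp^p(g)$ in the sense of \eqref{eq:lpstable} if and only if $\vec\Lambda$ is a sampling set for $\sisp^p(G)$ in the sense of \eqref{eq_vec_samp}; the underlying identity is $\sum_{j=1}^N\sum_{\lambda\in\Lambda^j}|f^{(j-1)}(\lambda)|^p=\sum_{\lambda\in\Lambda}\sum_{i=0}^{\ml(\lambda)-1}|f^{(i)}(\lambda)|^p$ together with $\|F\|_p\asymp\|f\|_p$. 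Hence items (a) and (b) of the present theorem are, via the dictionary, exactly items (a) and (b) of Theorem~\ref{th_wl_vec}.

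Next I would match the weak-limit items (c) and (d). By the interpretation fixed above, a weak limit $\vec\Gamma\in\WZ(\vec\Lambda)$ and its associated set with multiplicity $(\Gamma,\mg)$ determine each other, and Proposition~\ref{prop_wstar} upgrades this to a genuine bijection between $\WZ(\vec\Lambda)$ and $\WZ(\Lambda,\ml)$: convergence of the measures $\sum_\lambda\ml(\lambda)\delta_{\lambda-k_n}$ is equivalent to the simultaneous weak convergence $\Lambda^j-k_n\weakconv\Gamma^j$ for all $j$, and it forces the limit tuple to be nested, so that $\Gamma^k=\{\gamma:\mg(\gamma)\ge k\}$ and the height of $(\Gamma,\mg)$ is again at most $N$. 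For (c) I would then apply the sampling identity of the previous paragraph with $(\Gamma,\mg)$ in place of $(\Lambda,\ml)$, so that $(\Gamma,\mg)$ is a sampling set for $\sisp^\infty(g)$ precisely when $\vec\Gamma$ is a sampling set for $\sisp^\infty(G)$. For (d) the same dictionary yields the uniqueness statement, since $F^k\equiv0$ on $\Gamma^k$ for all $k=1,\ldots,N$ means exactly that $f^{(j)}(\gamma)=0$ for every $\gamma\in\Gamma$ and $0\le j\le\mg(\gamma)-1$; thus $(\Gamma,\mg)$ is a uniqueness set for $\sisp^\infty(g)$ if and only if $\vec\Gamma$ is a uniqueness set for $\sisp^\infty(G)$. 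With all four items matched, the four equivalences follow directly from Theorem~\ref{th_wl_vec}.

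The only point that I expect to demand real care is the bijectivity of the weak-limit correspondence: I must check that an arbitrary $\vec\Gamma\in\WZ(\vec\Lambda)$ really is the tuple of a set with multiplicity, i.e.\ that the inclusions $\Lambda^{j+1}\subseteq\Lambda^j$ pass to the limit as $\Gamma^{j+1}\subseteq\Gamma^j$. This is exactly what Proposition~\ref{prop_wstar} delivers by encoding all the multiplicities in the single measure $\sum_\lambda\ml(\lambda)\delta_\lambda$; once this nesting is secured, every remaining step is routine bookkeeping and no new analysis is needed.
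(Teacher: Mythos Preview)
Your proposal is correct and follows exactly the route the paper intends: the paper states Theorem~\ref{th_wl_tp} as a direct recasting of Theorem~\ref{th_wl_vec} through the dictionary of Section~3 (the lemma matching $(\Lambda,\ml)$-sampling in $\sisp^p(g)$ with $\vec\Lambda$-sampling in $\sisp^p(G)$, together with the interpretation of weak limits as sets with multiplicity), and offers no separate argument. Your write-up simply makes this translation explicit, including the one nontrivial bookkeeping point---that the nesting $\Gamma^{j+1}\subseteq\Gamma^j$ persists in the weak limit---which is indeed secured by the measure formulation of Proposition~\ref{prop_wstar} (or, equivalently, by a direct argument from the inclusion $\Lambda^{j+1}\subseteq\Lambda^j$ and the separation of $\Lambda$).
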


For bandlimited functions, only some of the implications in Theorem \ref{th_wl_tp} are valid.
These are formulated in terms of the Bernstein space
$\mathrm{PW}^\infty$ of continuous bounded functions which are Fourier transforms
of distributions supported on $[-1/2,1/2]$.
\begin{tm}
\label{th_wl_pw}
Let $(\Lambda, m_\Lambda)$ be a separated set with multiplicity and finite height.
Then the following are equivalent.
\begin{itemize}
\item[(a)] $(\Lambda, m_\Lambda)$ is a sampling set for $\mathrm{PW}^\infty$.
\item[(c)] Every weak limit $(\Gamma, \mg) \in \WZ(\Lambda, m_\Lambda)$ is a sampling
set
for $\mathrm{PW}^\infty$.
\item[(d)] Every weak limit $(\Gamma, \mg) \in \WZ(\Lambda, m_\Lambda)$ is a set of
uniqueness for $\mathrm{PW}^\infty$.
\end{itemize}
\end{tm}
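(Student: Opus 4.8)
The plan is to run Beurling's method of weak limits directly in the Bernstein space, since $\mathrm{PW}^\infty$ is \emph{not} of the form $\sisp^p(G)$ with $G\in(W_0(\Rst))^N$ (the sinc kernel is not in $W_0$), so Theorem~\ref{th_wl_vec} does not apply. The two structural facts that replace the shift-invariant machinery are the following. First, the unit ball of $\mathrm{PW}^\infty$ is a normal family: by Bernstein's inequality $\|f^{(j)}\|_\infty\le\pi^j\|f\|_\infty$, every $f$ together with its first $N-1$ derivatives is uniformly Lipschitz on the scale of its sup-norm, so Arz\'ela--Ascoli yields subsequences converging locally uniformly (with all derivatives up to order $N-1$), and by Paley--Wiener the locally uniform limit of uniformly bounded functions with spectrum in $[-1/2,1/2]$ again lies in $\mathrm{PW}^\infty$. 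Second, Proposition~\ref{prop_wstar} transports the multiplicity function through a weak limit by testing against the measures $\sum_\lambda \ml(\lambda)\delta_{\lambda-k_n}$. Throughout, ``sampling fails'' means the \emph{lower} inequality fails, the upper bound $\sup_{\lambda}\max_{j<\ml(\lambda)}|f^{(j)}(\lambda)|\le\pi^{N-1}\|f\|_\infty$ being automatic by Bernstein.

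It suffices to establish the core equivalence (a)$\Leftrightarrow$(d). Indeed, (c)$\Rightarrow$(a) and (c)$\Rightarrow$(d) are trivial: taking $k_n\equiv0$ shows $(\Lambda,\ml)\in\WZ(\Lambda,\ml)$, and sampling sets are uniqueness sets. Moreover (d)$\Rightarrow$(c) follows from the core equivalence applied to each $(\Gamma,\mg)\in\WZ(\Lambda,\ml)$, once we observe that weak limits compose, i.e.\ $\WZ(\WZ(\Lambda,\ml))\subseteq\WZ(\Lambda,\ml)$ by a diagonal extraction: the weak limits of a given $(\Gamma,\mg)$ are themselves weak limits of $(\Lambda,\ml)$, hence uniqueness sets by (d), and so $(\Gamma,\mg)$ is sampling by the core equivalence. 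Thus everything reduces to (a)$\Leftrightarrow$(d).

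I would prove (d)$\Rightarrow$(a) by contraposition, and this is the clean direction. If $(\Lambda,\ml)$ is not sampling, choose $f_m\in\mathrm{PW}^\infty$ with $\|f_m\|_\infty=1$ and $\sup_{\lambda}\max_{j<\ml(\lambda)}|f_m^{(j)}(\lambda)|\to0$, pick $t_m$ with $|f_m(t_m)|>1/2$, and recenter by the nearest integer $k_m$, so that $g_m:=f_m(\cdot+k_m)$ has $\|g_m\|_\infty=1$ and attains $|g_m|>1/2$ at a point of $[-1/2,1/2]$. By the normal-family property, along a subsequence $g_m\to g$ locally uniformly with all derivatives, $g\in\mathrm{PW}^\infty$ and $g\not\equiv0$; passing to a further subsequence, $(\Lambda-k_m,\ml)$ converges to some $(\Gamma,\mg)\in\WZ(\Lambda,\ml)$ in the sense of Proposition~\ref{prop_wstar}. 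For each $\gamma\in\Gamma$ and each $j<\mg(\gamma)$ one finds, using separation and Proposition~\ref{prop_wstar}, points $\lambda_m\in\Lambda$ with $\lambda_m-k_m\to\gamma$ and $\ml(\lambda_m)>j$ eventually, whence $g^{(j)}(\gamma)=\lim_m f_m^{(j)}(\lambda_m)=0$. Thus $g$ witnesses that $(\Gamma,\mg)$ is not a uniqueness set, contradicting (d).

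The remaining direction (a)$\Rightarrow$(d), that a global sampling inequality is inherited by every weak limit, is where I expect the main obstacle. The single-translate argument available for shift-invariant spaces breaks down here because functions in $\mathrm{PW}^\infty$ do not decay: translating an $f$ that vanishes on $\Gamma$ produces functions with small samples only on a bounded window of $\Lambda$, while their sup-norm may escape to infinity. The way I would circumvent this is to first upgrade the global sampling inequality to a \emph{local} one (a homogeneous approximation property): there exist $R,A_0>0$ such that $A_0|f(x)|\le\sup\{|f^{(j)}(\lambda)|:\lambda\in\Lambda\cap[x-R,x+R],\ j<\ml(\lambda)\}$ for all $f\in\mathrm{PW}^\infty$ and all $x\in\Rst$. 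Such a localization is itself proved by a normal-family compactness argument, exploiting that bounded band-limited functions restricted to a long interval are essentially finite-dimensional. Granting it, (a)$\Rightarrow$(d) is immediate: applying the local inequality to $f(\cdot+k_n)$ at $x=0$ and letting $n\to\infty$, the right-hand side tends to $0$ because the sampling points $\lambda-k_n$ inside $[-R,R]$ approach $\Gamma$, where $f$ and its relevant derivatives vanish; hence $f(0)=0$, and by translation $f\equiv0$. As above, the multiplicity bookkeeping in all of these limiting arguments is supplied uniformly by Proposition~\ref{prop_wstar}.
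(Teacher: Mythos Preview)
The paper does not give its own proof of this theorem; it simply cites Beurling \cite{be66,be89} and \cite{olul12} and remarks that ``a slight modification of the arguments yields the case with multiplicities.'' Your reduction to the core equivalence (a)$\Leftrightarrow$(d), together with the diagonal argument $\WZ(\WZ(\Lambda,\ml))\subseteq\WZ(\Lambda,\ml)$, is correct, and your proof of (d)$\Rightarrow$(a) by contraposition via normal families and Bernstein's inequality is exactly Beurling's argument, with the multiplicity bookkeeping handled by Proposition~\ref{prop_wstar} as the paper intends. This direction is the one actually used downstream (in the proof of Theorem~\ref{th_samp_der_pw}), and it is fine.

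The gap is in (a)$\Rightarrow$(d). The ``local sampling inequality'' you propose,
\[
A_0\,|f(x)| \le \sup\bigl\{|f^{(j)}(\lambda)|:\lambda\in\Lambda\cap[x-R,x+R],\ j<\ml(\lambda)\bigr\}
\quad\text{for all }f\in\mathrm{PW}^\infty,\ x\in\bR,
\]
is \emph{false} for every sampling set $\Lambda$ and every $R$. Take for instance $\Lambda=\tfrac12\bZ$, $\ml\equiv1$; fix $R$ and any $x\notin\tfrac12\bZ$. The set $\Lambda\cap[x-R,x+R]$ is finite, and since $\mathrm{PW}^2\subset\mathrm{PW}^\infty$ is infinite-dimensional, there exists $f\in\mathrm{PW}^\infty$ vanishing on this finite set with $f(x)\neq0$, so the inequality fails. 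The heuristic that ``bounded band-limited functions restricted to a long interval are essentially finite-dimensional'' does not yield such an inequality; it only says that \emph{most} of the $L^2$-energy is captured by finitely many prolate modes, not that pointwise values are controlled by finitely many samples. Moreover, if you try to salvage the argument by a normal-family contradiction (assume the local inequality fails for all $R$, extract a limit), you cannot bound $\|f_R\|_\infty$ from the hypothesis alone, and if you impose $\|f_R\|_\infty\le1$ the limiting argument only shows that some weak limit $\Gamma'\in\WZ(\Lambda)$ fails to be sampling---which is (c), not (a), so the reasoning becomes circular.

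Beurling's actual route for (a)$\Rightarrow$(d) is different: one multiplies the offending $f$ (vanishing on $\Gamma$) by a band-limited minorant such as a power of $\mathrm{sinc}$ to force decay at infinity, and then exploits this decay together with the sampling inequality on the translated sets $\Lambda-k_n$; the slight enlargement of the spectrum is absorbed by a dilation. This is the ``delicate'' step you correctly anticipate, but your proposed substitute does not survive the counterexample above.
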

As a replacement for the $L^2$ part of Theorem \ref{th_wl_tp}, we have the following result.
\begin{prop}
\label{prop_pw}
Let $(\Lambda, m_\Lambda)$ be a separated set with multiplicity and finite height,
and assume that $(\Lambda, m_\Lambda)$ is a sampling set for $\mathrm{PW}^\infty$. Then, for
every $\alpha \in (0,1)$, $(\alpha \Lambda, m_\Lambda)$ is a sampling set for $\mathrm{PW}^2$.
\end{prop}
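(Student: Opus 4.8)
The plan is to separate the two sampling inequalities. The upper inequality $\sum_{\lambda}\sum_{j=0}^{\ml(\lambda)-1}\abs{f^{(j)}(\alpha\lambda)}^2 \le B\norm{f}_2^2$ for $f\in\mathrm{PW}^2$ is automatic: each derivative $f^{(j)}$ again lies in $\mathrm{PW}^2$ with $\norm{f^{(j)}}_2\le \pi^j\norm{f}_2$ (since $\abs{\widehat{f^{(j)}}(\xi)}=\abs{2\pi\xi}^j\abs{\hat f(\xi)}\le \pi^j\abs{\hat f(\xi)}$ on $[-\tfrac12,\tfrac12]$), and for the separated set $\alpha\Lambda$ the Plancherel--Pólya inequality gives $\sum_{\mu}\abs{f^{(j)}(\mu)}^2\lesssim \norm{f^{(j)}}_2^2$; summing over the finitely many derivative orders (the height is finite) yields $B$. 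Thus only the lower inequality needs work.

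To exploit the hypothesis I would first record the effect of the dilation. For $f$ with $\supp\hat f\subseteq[-\tfrac{1}{2\alpha},\tfrac{1}{2\alpha}]$ set $g(x)=f(\alpha x)$, so that $\supp\hat g\subseteq[-\tfrac12,\tfrac12]$ and $g\in\mathrm{PW}^\infty$; applying the Bernstein sampling inequality for $(\Lambda,\ml)$ to $g$ and using $g^{(j)}(\lambda)=\alpha^j f^{(j)}(\alpha\lambda)$ together with $\alpha^j\le1$ shows that $(\alpha\Lambda,\ml)$ is a sampling set for the \emph{larger} Bernstein space $\cB$ of bandwidth $1/(2\alpha)$ (bounded continuous functions whose Fourier transform is supported in $[-\tfrac{1}{2\alpha},\tfrac{1}{2\alpha}]$). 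I would then transfer this to weak limits: since $\alpha\Lambda+k=\alpha(\Lambda+k/\alpha)$ and the Bernstein spaces are translation invariant, every weak limit in $\WZ(\alpha\Lambda,\ml)$ has the form $\alpha\Gamma$, where $\Gamma$ is a weak limit of $\Lambda$; by Theorem~\ref{th_wl_pw} such $\Gamma$ are uniqueness sets for $\mathrm{PW}^\infty$, and the rescaling $h\mapsto h(\alpha\,\cdot)$ shows that $\alpha\Gamma$ is a uniqueness set for $\cB$, hence \emph{a fortiori} for $\mathrm{PW}^2\subseteq\cB$.

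For the lower $L^2$ inequality I would run a normal-families argument in Beurling's style. Assuming it fails, I choose $f_n\in\mathrm{PW}^2$ with $\norm{f_n}_2=1$ and $S_n:=\sum_{\lambda}\sum_{j}\abs{f_n^{(j)}(\alpha\lambda)}^2\to0$; after translating by suitable $t_n$ and passing to a subsequence I would arrange $\alpha\Lambda-t_n\weakconv\Delta$ for some $(\Delta,m_\Delta)\in\WZ(\alpha\Lambda,\ml)$ while $f_n(\cdot+t_n)$ converges locally in $L^2$ to a limit $f_\infty\in\mathrm{PW}^2$; passing the (uniformly valid) Bessel bound and the vanishing of $S_n$ to the limit forces $f_\infty^{(j)}\equiv0$ on $\Delta$, contradicting that $\Delta$ is a uniqueness set for $\cB\supseteq\mathrm{PW}^2$.

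The decisive technical point is the non-degeneracy of this limit. Because the Bernstein inequality for $\cB$ gives $\norm{f_n}_\infty\le A_\infty^{-1}\sup_{\mu,j}\abs{f_n^{(j)}(\mu)}\le A_\infty^{-1}S_n^{1/2}\to0$, the functions $f_n$ vanish uniformly, so the $L^2$-mass may \emph{a priori} spread out and escape to infinity, in which case every local limit is $0$ and the argument collapses. Ruling this out is the heart of the matter, and it is precisely here that the strict inequality $\alpha<1$ is indispensable: the density margin $D^{-}(\alpha\Lambda,\ml)=\alpha^{-1}D^{-}(\Lambda,\ml)>1$ furnishes the oversampling needed to compare the nonuniform sample sum over $\alpha\Lambda$ with the integral $\int\abs{f_n}^2$, giving a uniform lower bound on the local $L^2$-mass along a suitable sequence of centers $t_n$ and hence a nonzero $f_\infty$. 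I expect this anti-concentration (oversampled quadrature) estimate, rather than the soft functional-analytic steps, to be the main obstacle.
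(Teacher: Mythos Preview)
Your normal-families contradiction argument has a genuine gap that you yourself almost identify but do not resolve. You correctly observe that the $L^\infty$ sampling inequality in the enlarged Bernstein space $\cB$ forces $\norm{f_n}_\infty\le A_\infty^{-1}S_n^{1/2}\to 0$. But once $\norm{f_n}_\infty\to 0$, \emph{every} translated subsequence converges locally to $0$: for any centers $t_n$ and any $R>0$ one has $\int_{-R}^{R}\abs{f_n(\cdot+t_n)}^2\le 2R\norm{f_n}_\infty^2\to 0$. So the limit $f_\infty$ is necessarily identically zero, and no uniqueness property of the weak-limit set $\Delta$ can yield a contradiction. Your proposed remedy---an ``oversampled quadrature'' comparison between $\sum_{\mu,j}\abs{f_n^{(j)}(\mu)}^2$ and $\int\abs{f_n}^2$ coming from the density margin $D^-(\alpha\Lambda,\ml)>1$---is circular: such a comparison \emph{is} the lower $L^2$ sampling inequality you are trying to prove. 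The weak-limit machinery characterizes $L^\infty$ sampling; it does not by itself bridge $L^\infty$ to $L^2$.

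The paper does not supply its own argument but refers to Beurling, whose proof is entirely different and avoids compactness. One fixes $\delta<\tfrac{1}{2\alpha}-\tfrac12$ and a Schwartz function $\phi$ with $\supp\hat\phi\subseteq[-\delta,\delta]$ and $\phi(0)=1$. For $f\in\mathrm{PW}^2$ and each $y\in\Rst$, the product $g_y(x):=f(x)\phi(x-y)$ lies in the larger Bernstein space $\cB$, so the $L^\infty$ sampling inequality there (which you established in your Step~2) gives
\[
\abs{f(y)}=\abs{g_y(y)}\le \norm{g_y}_\infty\le C\sup_{\lambda\in\Lambda}\max_{0\le j<\ml(\lambda)}\abs{g_y^{(j)}(\alpha\lambda)}.
\]
Squaring, bounding the supremum by the sum, integrating in $y$, and expanding $g_y^{(j)}=\sum_{k\le j}\binom{j}{k}f^{(k)}\phi^{(j-k)}(\cdot-y)$ yields
\[
\norm{f}_2^2\le C'\sum_{\lambda}\sum_{j=0}^{\ml(\lambda)-1}\sum_{k=0}^{j}\abs{f^{(k)}(\alpha\lambda)}^2\,\norm{\phi^{(j-k)}}_2^2
\le C''\sum_{\lambda}\sum_{k=0}^{\ml(\lambda)-1}\abs{f^{(k)}(\alpha\lambda)}^2,
\]
since the height is bounded by $N$. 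This is the missing lower bound; your upper bound via Plancherel--P\'olya is fine. The point is that the passage from $\mathrm{PW}^\infty$ to $\mathrm{PW}^2$ goes through this localization-and-integration trick, not through weak limits.
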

Theorem \ref{th_wl_pw} and Proposition \ref{prop_pw} are due to Beurling \cite{be66, be89}
(without multiplicities) - see also \cite[Theorem 2.1]{olul12}. A slight modification of
the arguments yields the case with multiplicities.

\goodbreak
\subsection{Necessary density conditions}
\begin{prop}
\label{prop_nec}
Let $g \in W^N_0(\Rst)$ have stable integer shifts and
let $(\Lambda, m_\Lambda)$ be a separated set with multiplicity and height at most $N < \infty$.
If $(\Lambda, m_\Lambda)$ is a sampling set for $V^2(g)$, then
$D^{-}(\Lambda, m_\Lambda) \geq 1$.

A similar statement holds for the Paley-Wiener space $\mathrm{PW}^2$.
\end{prop}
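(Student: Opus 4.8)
The plan is to recognize the sampling inequality \eqref{eq:lpstable} for $p=2$ as a frame inequality in the reproducing kernel Hilbert space $\sisp^2(g)$ and then to invoke a comparison-of-densities principle against the Riesz basis of integer shifts, whose Beurling density is normalized to $1$. First I would record that $\sisp^2(g)$ is a reproducing kernel Hilbert space for all the relevant functionals: writing $f=\sum_k c_k g(\cdot-k)$ with $\norm{c}_2\asymp\norm{f}_2$, the estimate $\abs{f^{(j)}(\lambda)}=\abs{\sum_k c_k g^{(j)}(\lambda-k)}\le \norm{c}_2\,\big(\sum_k\abs{g^{(j)}(\lambda-k)}^2\big)^{1/2}$ together with $g^{(j)}\in W_0(\Rst)$ shows that each functional $f\mapsto f^{(j)}(\lambda)$ is bounded on $\sisp^2(g)$, uniformly in $\lambda$. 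Hence there are representers $k_{\lambda,j}\in\sisp^2(g)$ with $f^{(j)}(\lambda)=\ip{f}{k_{\lambda,j}}$, and \eqref{eq:lpstable} says precisely that the family $\{k_{\lambda,j}:\lambda\in\Lambda,\ 0\le j\le\ml(\lambda)-1\}$ is a frame for $\sisp^2(g)$.

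Next I would compare this frame to the reference system furnished by the generator itself. Since $g$ has stable integer shifts, $\{g(\cdot-k):k\in\bZ\}$ is a Riesz basis of $\sisp^2(g)$ indexed by $\bZ$, of Beurling density $1$. The Beurling density of the sampling frame is exactly $D^-(\Lambda,\ml)$, because over an interval $I$ the number of representers equals $\sum_{\lambda\in\Lambda\cap I}\ml(\lambda)$, so the relevant counting measure is $\sum_\lambda \ml(\lambda)\,\delta_\lambda$ and its lower density is \eqref{eq_bdens}. A comparison principle of Ramanathan--Steger type (equivalently, a Landau-type concentration argument) then yields $D^-(\Lambda,\ml)\ge D^-(\bZ)=1$, which is the assertion. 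The same scheme proves the $\mathrm{PW}^2$ statement: there the representers are translates and derivatives of the $\mathrm{sinc}$ kernel, the reference Riesz basis is $\{\mathrm{sinc}(\cdot-k):k\in\bZ\}$, and the conclusion is Landau's classical necessary density.

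The ingredient that makes the comparison work, and which I expect to be the main obstacle, is the homogeneous approximation property: one must show that the representers $k_{\lambda,j}$ are uniformly well localized around $\lambda$, so that the number of frame vectors essentially supported on a large interval matches the concentration of the identity on that interval. This localization is where the hypotheses enter. From $g^{(j)}\in W_0(\Rst)$ and the stable-shift assumption one obtains, via Sj\"ostrand's version of Wiener's lemma for convolution-dominated matrices \cite{sj95}, a $W_0$-localized dual generator, hence off-diagonal decay of the reproducing kernel and of its derivatives, and therefore the required decay of $k_{\lambda,j}$ away from $\lambda$; the same uniform $W_0$-bounds supply the uniformity over translates needed for the $\inf_x$ in \eqref{eq_bdens}. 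The remaining modifications over the classical multiplicity-free statement are mere bookkeeping, namely counting each derivative functional at $\lambda$ with weight $\ml(\lambda)$. With these localization estimates in hand, the trace and concentration comparison is routine and delivers $D^-(\Lambda,\ml)\ge 1$.
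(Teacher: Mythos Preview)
Your proposal is correct and follows the same overall strategy as the paper: both recognize the sampling inequality as a frame condition and invoke a Ramanathan--Steger/Landau-type density comparison against the reference system of integer shifts (density $1$). The implementation differs slightly. The paper transfers the problem to $\ell^2(\bZ)$ via the Bessel isomorphism $c\mapsto\sum_k c_k g(\cdot-k)$, so that the frame vectors become the sequences $\varphi_{\lambda,j}=(g^{(j)}(\lambda-k))_{k\in\bZ}$ and the reference system is the canonical basis; the required $\ell^1$-localization estimate $|\varphi_{\lambda,j}(k)|\le\Phi(\alpha(\lambda,j)-k)$ with $\Phi\in W_0$ is then read off \emph{directly} from $g^{(j)}\in W_0$, and the comparison theorem of \cite{bchl06} finishes. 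Your route stays in the RKHS $\sisp^2(g)$ with the representers $k_{\lambda,j}$, which forces you to establish decay of the reproducing kernel and hence to pass through the dual generator and Sj\"ostrand's Wiener lemma---an extra step the paper's $\ell^2$ transfer avoids. Both are valid; the paper in fact lists the RKHS approach of \cite{fghkr} (which is essentially what you sketch) as an alternative.
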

Proposition \ref{prop_nec} follows from standard results on density of frames,
see e.g. \cite{bchl06, fghkr}. See Section \ref{sec_post} for a sketch of a proof.

\section{Density of zero sets in shift-invariant spaces}
We derive  sharp upper bounds for the density of real zeros of
functions in shift-invariant spaces with special generators. First,
 we  use methods of complex analysis when the generator is a Gaussian
(Section 4.1) or a hyperbolic secant (Section 4.2).  The
results and arguments are  similar for both cases, but the case of the
hyperbolic secant requires considerably more work and the analysis of
meromorphic functions.  In Sections 4.3 and 4.4  we then analyze the
zero sets in shift-invariant spaces generated by a totally positive
function of Gaussian type by means of a comparison theorem.

\subsection{The Gaussian}
We now consider Gaussian functions $\phi_a(x) := e^{-a x^2}$  with $a>0$.
\begin{lemma} \label{l1}
  Every $f=\sum_k c_k \phi_a(\cdot-k)\in \sisp ^\infty (\phi_a )$ possesses an extension to an
  entire function satisfying the growth estimate
\begin{align}
\label{eq_growth}
|f(x+iy)| \leq C \|c\|_\infty e^{a y^2}    \qquad x,y \in \bR \, .
\end{align}
\end{lemma}
\begin{proof}
 Using
$$
e^{-a (x+iy-k)^2} = e^{a y^2} e^{-2a i xy} e^{2a i ky} e^{-a
  (x-k)^2} \, ,
$$
we obtain that
\begin{equation}
  \label{eq:1}
  f(x+iy) = e^{a y^2} e^{-2a i xy} \sum _{k\in \bZ } c_k e^{2a i ky} e^{-a
  (x-k)^2} \, .
\end{equation}
Consequently
$$
|f(x+iy)| \leq e^{a y^2} \|c\|_\infty \sum_{k\in \bZ } e^{-a (x-k)^2} \, ,
$$
and we may take $C= \sup _{0\leq x\leq 1} \sum_{k\in \bZ } e^{-a
  (x-k)^2}$. Clearly,  $x+iy \mapsto f(x+iy)$ is an entire function.
\end{proof}
Our key observation relates the real zeros of $f\in
\sisp^\infty(\phi_a)$ to the zeros of its analytic extension.

\begin{lemma} \label{l2}
Let $f\in \sisp ^\infty (\phi _a)$ and $\lambda \in \bR $ be a zero of
$f$ with multiplicity $m$. Then for every $l\in \frac{\pi}{a}\bZ $, $\lambda + il$
is a zero of the analytic extension of $f$ with the same multiplicity
$m$.  In particular, if $f^{(j)}(\lambda ) = 0$ for $j= 0, \dots , m-1$, then
  $f^{(j)}(\lambda +il) = 0$ for $j= 0, \dots , m-1$ and  all $l\in \frac{\pi}{a}\bZ
    $.
  \end{lemma}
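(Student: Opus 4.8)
The plan is to exploit the Gaussian structure of the entire extension to transform $f$ into a function that is genuinely periodic in the imaginary direction with period $i\pi/a$. Once this periodicity is established, the transfer of zeros, together with their multiplicities, from $\lambda$ to $\lambda+il$ is immediate.

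First I would invoke the formula from Lemma \ref{l1}: the entire extension of $f$ is
\[
f(z) = \sum_{k\in\bZ} c_k\, e^{-a(z-k)^2}, \qquad z\in\bC .
\]
The decisive step is to strip off the nowhere-vanishing Gaussian factor $e^{-az^2}$. Define
\[
g(z) := e^{az^2} f(z) = \sum_{k\in\bZ} c_k\, e^{a(z^2-(z-k)^2)} = \sum_{k\in\bZ} c_k\, e^{-ak^2} e^{2akz},
\]
the term-by-term multiplication being justified by the absolute and locally uniform convergence already used in the proof of Lemma \ref{l1}, which also shows that $g$ is entire. The purpose of this reformulation is that each exponential $e^{2akz}$ is invariant under $z\mapsto z+i\pi/a$, since $e^{2ak\cdot i\pi/a} = e^{2\pi ik} = 1$. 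Hence $g$ is periodic:
\[
g(z+i\pi/a) = g(z), \qquad z\in\bC .
\]

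With periodicity in hand, the conclusion follows by elementary bookkeeping. Because $e^{az^2}$ is entire and never vanishes, multiplication by it preserves both the location and the exact multiplicity of every zero; thus $\lambda$ is a zero of $g$ of multiplicity $m$. By periodicity $\lambda+i\pi/a$, and hence by iteration every $\lambda+il$ with $l\in\frac{\pi}{a}\bZ$, is a zero of $g$ of the same multiplicity $m$. Transferring back through the nowhere-vanishing factor $e^{-az^2}$ shows that each $\lambda+il$ is a zero of $f$ of multiplicity $m$. The final ``in particular'' assertion is then merely the restatement of a zero of order at least $m$ as the simultaneous vanishing of $f,f',\dots,f^{(m-1)}$.

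The only genuine idea here is the substitution $f\mapsto g=e^{az^2}f$; everything afterwards is routine manipulation of nonvanishing multipliers and multiplicities. Accordingly, the main, and rather mild, obstacle is simply to confirm that the rearrangement defining $g$ is legitimate and that $g$ is entire, both of which rest on the convergence estimate established in Lemma \ref{l1}.
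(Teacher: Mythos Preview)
Your argument is correct and in fact cleaner than the one in the paper. The paper does not introduce the auxiliary function $g(z)=e^{az^2}f(z)$; instead it works directly with the identity $f(x+iy)=e^{ay^2}e^{-2aixy}\sum_k c_k e^{2aiky}e^{-a(x-k)^2}$ to handle the case $m=1$, and then for higher multiplicities differentiates term by term, obtaining $f^{(j)}(z)=\sum_k c_k p_j(z-k)e^{-a(z-k)^2}$ with $p_j$ the Hermite-type polynomials, expands $p_j(\lambda-k+il)$ in a Taylor series around $\lambda-k$, and checks that each resulting sum vanishes. Your substitution $f\mapsto e^{az^2}f$ packages all of this into the single observation that $g$ is $i\pi/a$-periodic, so that the transfer of zeros \emph{with multiplicities} is immediate from the fact that multiplication by a nowhere-vanishing entire function preserves the order of every zero. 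What your route buys is brevity and conceptual clarity; what the paper's route buys is an explicit display of the polynomial structure, which is not needed elsewhere. Both rely on exactly the same convergence input from Lemma~\ref{l1}.
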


  \begin{proof}
By \eqref{eq:1} we obtain that
\begin{align*}
  f(\lambda +il) = e^{a l^2} e^{-2a i \lambda l} \sum _{k\in \bZ } c_k e^{2a i kl}
e^{-a
  (\lambda -k)^2}
  = e^{a l^2} e^{-2a i \lambda l} f(\lambda)
  = 0,
\end{align*}
because $e^{2a i kl}=1$ for all $l\in \frac{\pi}{a}\bZ$.

For higher multiplicities we argue as follows.
Note first that
$\frac{d^j}{dx ^j}(e^{-a x^2}) = p_j(x) e^{-a x^2}$ for a polynomial of degree
$j$  satisfying the recurrence relation $p_{j+1} (x) = -2a x p_j(x)
+ p_j ' (x)$.  It follows that the set $\{ p_j: j=0, \dots ,
m-1\}$ is a basis for the polynomials of degree smaller than $m$.

Now assume that $f\in \sisp ^\infty (\phi_a )$ and  $f^{(j)}(\lambda ) =
0$ for $j=0, \dots , m-1$. Then
$$
\sum _{k\in \bZ } c_k p_j(\lambda-k) e^{-a (\lambda -k)^2} = 0
\qquad \text{ for } j= 0, \dots , m-1 \, .
$$
This implies that for every polynomial $q$ of degree $<m$
\begin{equation}
  \label{eq:3}
\sum _{k\in \bZ } c_k q(\lambda-k) e^{-a (\lambda -k)^2} = 0  \, .
\end{equation}
We now proceed as in~\eqref{eq:1} and find that, for
$j=0,\ldots,m-1$,
\begin{align*}
f^{(j)}(\lambda +il) &= \sum _{k\in \bZ } c_k p_j(\lambda-k+il )
                       e^{-a (\lambda -k+il )^2} \\
&= e^{a l^2} e^{-2a i \lambda l} \sum _{k\in \bZ } c_k
  p_j(\lambda-k+il ) e^{2a i kl } e^{-a (\lambda -k)^2} \, .
  \end{align*}
Note that $e^{2a i kl}=1$ for all $l\in \frac{\pi}{a}\bZ$.
We insert  the Taylor expansion of $p_j$ at $\lambda -k$, i.e.,
\[p_j(\lambda -k +
il) = \sum _{r=0}^j p_j^{(r)}(\lambda -k) \frac{(il)^r}{r!},\]
and we
obtain that
$$
f^{(j)}(\lambda +il)  = e^{a l^2} e^{-2a i \lambda l} \sum
_{r=0}^j \frac{(il)^r}{r!} \sum _{k\in \bZ } c_k
  p_j^{(r)}(\lambda -k)  e^{-a (\lambda -k)^2} \, .
$$
Since each $p_j^{(r)}$ is a polynomial of degree $< m$, \eqref{eq:3}
implies that $f^{(j)}(\lambda +il) = 0$ for all $l\in \frac{\pi}{a}\bZ $ and $j=0,
\dots , m-1$. This shows that the multiplicity of $\lambda + i l$
is at least that of $\lambda$. Reversing the roles of $\lambda$ and $\lambda + i l$ we see that
the multiplicities are actually equal.
\end{proof}

We recall Jensen's formula, which relates the
number of zeros $n(r)$ in a disk $B(0,r)$
to the growth of an entire function by the identity
\begin{align} \label{jens}
\int_0^R \frac{n(r)}{r} dr = \frac{1}{2\pi} \int_0^{2\pi} \log\abs{f(R
  e^{i\theta})} d\theta - \log\abs{f(0)} \, .
\end{align}
 This is our main tool (from complex analysis) to prove the following
 result about the density of real zeros of functions in a
 shift-invariant space.

\begin{tm} \label{tm1}
Let $\phi_a(x) = e^{-a x^2}$  with $a>0$.
Let $f\in \sisp ^\infty (\phi_a) \setminus \{0\}$ and $N_f$ its set of real zeros, with
multiplicities $\mult_f(x)$, $x \in N_f$. Then $D^-(N_f, \mult_f) \leq 1$.
\end{tm}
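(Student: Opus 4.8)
The plan is to argue by contradiction. Suppose that $D^{-}(N_f,\mult_f)>1$. Then there are $\delta>0$ and $r_0>0$ such that for \emph{every} $x\in\bR$ and every $r\ge r_0$ the number of real zeros in $[x-r,x+r]$, counted with multiplicity, satisfies
\begin{equation}\label{eq:lbcount}
n_\bR(x,r):=\sum_{\lambda\in N_f\cap[x-r,x+r]}\mult_f(\lambda)\ \ge\ (1+\delta)\,2r .
\end{equation}
The whole idea is that Lemma~\ref{l2} upgrades this one–dimensional bound into a \emph{quadratic} lower bound for the number of complex zeros of the entire extension of $f$ in large disks, which can then be confronted with the upper bound coming from Jensen's formula~\eqref{jens} together with the growth estimate~\eqref{eq_growth}.

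For the lower bound I would proceed as follows. Since $f\not\equiv0$ is entire, its real zeros are isolated, so I may fix $x_0\in\bR$ with $f(x_0)\ne0$ and count zeros in the disks $B(x_0,R)$, writing $n(R)$ for the number of zeros of the extension of $f$ in $B(x_0,R)$ counted with multiplicity. By Lemma~\ref{l2} each real zero $\lambda$ of multiplicity $\mult_f(\lambda)$ produces, for every $j\in\bZ$, a zero of the \emph{same} multiplicity at $\lambda+ij\pi/a$, and these points are pairwise distinct. Slicing the disk by the horizontal levels $\mathrm{Im}\,z=j\pi/a$ and setting $t_j:=\sqrt{R^2-(j\pi/a)^2}$, the replica zeros on level $j$ lying in $B(x_0,R)$ are exactly those with $\abs{\lambda-x_0}\le t_j$, so \eqref{eq:lbcount} gives, for $R$ large,
\begin{equation}\label{eq:nlower}
n(R)\ \ge\ \sum_{j:\,t_j\ge r_0} n_\bR(x_0,t_j)\ \ge\ 2(1+\delta)\sum_{j:\,t_j\ge r_0} t_j .
\end{equation}
The sum $\sum_j\sqrt{R^2-(j\pi/a)^2}$ is a Riemann sum of step $\pi/a$ for $\int_{-R}^R\sqrt{R^2-s^2}\,ds=\pi R^2/2$; since the integrand is monotone on each half of $[-R,R]$ with total variation $2R$, the Riemann-sum error is $O(R)$, and discarding the few terms with $t_j<r_0$ costs only $O(1)$. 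Hence $\sum_{j:\,t_j\ge r_0}t_j=\tfrac{a}{\pi}\cdot\tfrac{\pi R^2}{2}+O(R)=\tfrac{aR^2}{2}+O(R)$, and \eqref{eq:nlower} yields $n(R)\ge(1+\delta)aR^2-O(R)$.

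For the matching upper bound I would apply Jensen's formula~\eqref{jens} centered at $x_0$, inserting \eqref{eq_growth} in the form $\abs{f(x_0+Re^{i\theta})}\le C\norm{c}_\infty e^{aR^2\sin^2\theta}$ and using $\frac{1}{2\pi}\int_0^{2\pi}\sin^2\theta\,d\theta=\tfrac12$, to obtain
\begin{equation}\label{eq:jensenub}
\int_0^R\frac{n(r)}{r}\,dr\ \le\ \frac{aR^2}{2}+\log\big(C\norm{c}_\infty\big)-\log\abs{f(x_0)}\ =\ \frac{aR^2}{2}+O(1).
\end{equation}
On the other hand, since $n$ is nondecreasing, the lower bound $n(r)\ge(1+\delta)ar^2-O(r)$ integrates to $\int_0^R\frac{n(r)}{r}\,dr\ge(1+\delta)\tfrac{aR^2}{2}-O(R)$. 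Comparing this with \eqref{eq:jensenub}, dividing by $aR^2/2$, and letting $R\to\infty$ forces $1+\delta\le1$, a contradiction; hence $D^{-}(N_f,\mult_f)\le1$.

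The hard part will be carrying the \emph{sharp} constant through the estimate. A cruder version — bounding $n(R)$ by an inscribed rectangle and bounding the Jensen integral from below by a single factor such as $\log 2$ — loses too much and only yields a density bound strictly larger than $1$. The decisive observation is that both sides of Jensen's formula grow like $aR^2/2$, with the assumed excess density $1+\delta$ entering as an honest multiplicative factor on the zero-counting side, so that matching the leading coefficients pins the bound to exactly $1$. The remaining care is purely technical: the Riemann-sum approximation of $\sum_j t_j$ must not lose a constant when passing from the linear density to the planar zero count, and one must record the harmless freedom of choosing a center $x_0$ with $f(x_0)\ne0$ so that Jensen's formula applies directly.
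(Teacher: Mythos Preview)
Your proposal is correct and follows essentially the same approach as the paper: contradiction via Jensen's formula, using Lemma~\ref{l2} to lift the real density to a quadratic count of complex zeros, a Riemann-sum argument to capture the sharp constant $aR^2$, and the growth estimate~\eqref{eq_growth} for the Jensen upper bound. The only cosmetic difference is that you slice the disk horizontally by the levels $\mathrm{Im}\,z=j\pi/a$ and sum $t_j=\sqrt{R^2-(j\pi/a)^2}$, whereas the paper slices vertically into strips $[kR,(k+1)R)$ and counts replicas in each strip; both lead to the same Riemann sum for $\tfrac{\pi R^2}{2}$ and hence to the identical leading coefficients on the two sides of Jensen's formula.
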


\begin{proof}
Note that $N_f = \{\lambda \in \bR: f(\lambda ) =0\} $ is the set of
\emph{real} zeros of $f$. By Lemma \ref{l2}, the  set of
\emph{complex} zeros of (the analytic extension of)
$f$ contains the set $N_f + i\frac{\pi}{a}\bZ \subseteq \bC $, and, moreover, multiplicities are preserved.

To prove the theorem, we argue indirectly and   assume that $D^-(N_f,\mult_f) >1$.
Then there exists $\nu >1$ and $R_0$, such that
$$
\sum_{\lambda\in N_{f} \cap [x,x+r]} m_f(\lambda) \geq \nu r \qquad \text{ for all } x\in \bR, \, r
\geq R_0 \, .
$$

Let $n(r)$ be the number of complex  zeros of $f$ inside the open disk
$B(0,r)\subseteq \bC $  counted with
multiplicities.
Let us assume for the moment that $f(0)\not=0$.

The right-hand side of Jensen's formula~\eqref{jens} can be estimated,
by means of the growth estimate~\eqref{eq_growth}, as
 \begin{equation}
   \label{eq:5}
\frac{1}{2\pi} \int_0^{2\pi} \log\abs{{f}(R
  e^{i\theta})} \, d\theta
  - \log\abs{{f}(0)}
  \leq  A+\frac{1}{2\pi} \int_0^{2\pi}
a R^2 \sin ^2 \theta \,  d\theta =  A+\frac{a R^2}{2} \, ,
 \end{equation}
 where $A:= -\log{\abs{f(0)}}+ \log(\|c\|_\infty C)$.

 To estimate the left-hand side of \eqref{jens},
 we choose $R\in \bN
$ and  $R\geq
 R_0$ and partition $[-R^2, R^2) = \bigcup _{k=-R}^{R-1} [kR,
 (k+1)R)$. On each interval there are at least $\nu R$ real zeros of ${f}$
counted with multiplicity. By symmetry it is enough to consider intervals $[kR,(k+1)R)$ with $0\le k\le R-1$. By Lemma \ref{l2},
for each real zero $\lambda \in [kR, (k+1)R)$, with a certain multiplicity $m$, there are $2
\lfloor \frac{a}{\pi}\sqrt{(R^2)^2 - \lambda ^2} \rfloor +1
\geq  \frac{2a}{\pi}\sqrt{ R^4 - (k+1)^2R^2}-1$ complex zeros $\lambda
+il, l \in \tfrac{\pi}{a} \bZ$ in the disk $B(0,R^2)$,
each with
multiplicity $m$. By counting with multiplicities, there are at least
$$
\nu R \left( \frac{2a}{\pi}\sqrt{ R^4 - (k+1)^2R^2}-1\right)$$
complex zeros in $B(0,R^2)$ with real part in $[kR, (k+1)R)$ where $0 \le k\le R-1$. By summing over (positive and negative) $k$,
we obtain the following lower bound for the number
of complex zeros of ${f}$  in $B(0,R^2)$:
$$
n(R^2) \geq
2 \nu R \sum _{k=0}^{R-1}\left( \frac{2a}{\pi}\sqrt{ R^4 - (k+1)^2R^2}-1\right) =
 \frac{4 \nu aR^4}{\pi}\sum _{k=0}^{R-1} \frac{1}{R}\sqrt{1 - \frac{(k+1)^2}{ R^2}}
  - 2\nu R^2\, .
$$
The last sum is a Riemann sum of the integral $\int _{0}^1 \sqrt{1-x^2} \,
dx = \pi /4$. Let $\epsilon >0$ and $R_1\ge R_0$ satisfy $\beta:=\nu (1 -
\epsilon-\frac{2}{aR_1^2} ) > 1 $. Then, for some $R_2\ge R_1$ and all $R \geq R_2$,
we conclude that
$$
n(R^2) \geq a \nu R^4 \left(1 - \epsilon -\frac{2}{aR^2}\right)\ge a\beta R^4,
$$
or, equivalently,
$$
n(r) \geq a\beta r^2  \mbox{ for }
r \geq R_2^2.
$$
Therefore, the left-hand side of \eqref{jens} can be estimated as
$$
\int _{0} ^R \frac{n(r)}{r} \, dr  \geq \int _{R_2^2} ^R \frac{n(r)}{r} \, dr
\geq a\beta \left(\frac{R^2}{2} - \frac{R_2^4}{2} \right) \, .
$$
Since $\beta > 1 $, this estimate is
incompatible with the growth of ${f}$ as encoded in
\eqref{eq:5}. Therefore $D^-(N_{{f}},m_f) > 1$ is impossible.
This concludes the proof for $f$ such that $f(0) \not= 0$.

If $f(0)=0$, we let $n \geq 0$ be the vanishing order of $f$ at $0$ and
apply the previous argument to $\tilde{f}(z):=z^{-n}f(z)$. Alternatively,
one can verify directly that if $f \not\equiv 0$, then
there exists $k \in \bZ$ such that $f(k) \not=0$, and consider
$\tilde{f}(x)=f(x+k)$ with $\tilde f \in V^\infty (\phi )$.
\end{proof}

\subsection{The hyperbolic secant}
Let $\psi _a(x)=\sech(a x)=\frac{2}{e^{a x}+e^{-a x}}$. Our goal is to
study the  shift-invariant space generated by $\psi _a$.
While in \cite{grrost17} we studied $\sisp^2(\psi _a)$ by exploiting a connection
to Gabor analysis, and a certain representation of the Zak transform of $\psi _a$
due to Janssen and Strohmer \cite{jast02}, here we consider meromorphic extensions
of the functions in $\sisp^\infty(\psi _a)$.

We introduce the following notation.
For real $x$ we denote the roundoff error to the nearest integer as
 $\round{x}:= x-l$, where $l\in\bZ$ and $\round{x}\in[-1/2,1/2)$.

\begin{lemma}\label{l3}
 Every $f=\sum_{k\in\bZ} c_k \psi _a(\cdot-k) \in V^\infty(\psi _a)$ has an extension to a
meromorphic function
on $\bC$ with poles in
$$ P_f \subseteq P := \bZ+\frac{i\pi}{a}\left(\frac{1}{2}+\bZ\right).$$
Moreover, every pole of $f$ is simple and $f$ satisfies the growth estimate
\begin{equation}\label{eq:boundf}
\begin{array}{rcl}
   \abs{f(x+iy)} &\le&   C \|c\|_\infty \abs{\psi _a(\round{x}+iy)} \\[5pt]
&\le& C \|c\|_\infty \min\{\abs{a\round{x}}^{-1}, \abs{2\round{\tfrac{ay}{\pi}-\tfrac{1}{2}}}^{-1}\}.
\end{array}
\end{equation}
\end{lemma}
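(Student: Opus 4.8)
The plan is to regard $\psi_a$ as the restriction to $\bR$ of the meromorphic function $\psi_a(z)=\sech(az)=1/\cosh(az)$, whose only singularities are the simple zeros of $\cosh(az)$, located at $z\in\frac{i\pi}{a}(\tfrac12+\bZ)$. A one-line computation from $\cosh(u+iv)=\cosh u\cos v+i\sinh u\sin v$ yields the modulus identity
\begin{equation*}
\abs{\psi_a(x+iy)}=\bigl(\sinh^2(ax)+\cos^2(ay)\bigr)^{-1/2},
\end{equation*}
which is the workhorse for every estimate below. In particular $\abs{\psi_a(z-k)}\le\abs{\sinh(a(x-k))}^{-1}$ decays like $e^{-a\abs{x-k}}$ as $\abs{x-k}\to\infty$, uniformly in $y$, and this exponential decay is what makes the series $\sum_k c_k\psi_a(z-k)$ well behaved.

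First I would establish meromorphy. Fix a compact set $K\subseteq\bC$ contained in a box $\{\abs{x}\le M,\ \abs{y}\le M\}$. The poles of $c_k\psi_a(\cdot-k)$ lie on the vertical line $\mathrm{Re}=k$, so for $\abs{k}\ge M+1$ the term $\psi_a(z-k)$ is holomorphic on $K$ and, by the modulus identity, bounded there by $C\,e^{-a(\abs{k}-M)}$. Hence the tail $\sum_{\abs{k}\ge M+1}c_k\psi_a(z-k)$ converges uniformly on $K$ to a holomorphic function, while the remaining finite sum is meromorphic with poles only in $P$. As $K$ is arbitrary, $f$ extends meromorphically to $\bC$ with $P_f\subseteq P$. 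Simplicity is then immediate: a prospective pole $p=l+\frac{i\pi}{a}(m+\tfrac12)\in P$ has real part $l\in\bZ$, so among all terms only $c_l\psi_a(z-l)$ is singular at $p$; since $\psi_a$ has a simple pole there, $f=c_l\psi_a(\cdot-l)+(\text{holomorphic near }p)$ has at most a simple pole at $p$.

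For the growth estimate I would write $x=l+\round{x}$ with $l\in\bZ$ the nearest integer and $\round{x}\in[-\tfrac12,\tfrac12)$, and reindex by $j=l-k$ to obtain
\begin{equation*}
\abs{f(x+iy)}\le\norm{c}_\infty\sum_{j\in\bZ}\abs{\psi_a(\round{x}+j+iy)}.
\end{equation*}
The key point, which I expect to be the main obstacle, is the uniform domination $\sum_{j}\abs{\psi_a(\round{x}+j+iy)}\le C\,\abs{\psi_a(\round{x}+iy)}$. It rests on two facts. First, because $\abs{\round{x}}\le\tfrac12$ and $\cos^2\le1$, the modulus identity gives the \emph{lower} bound $\abs{\psi_a(\round{x}+iy)}\ge(\sinh^2(a/2)+1)^{-1/2}=:c_0>0$; this is precisely why the roundoff $\round{x}$, rather than $x$ itself, is the right variable to isolate. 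Second, for $j\ne0$ one has $\abs{\round{x}+j}\ge\abs{j}-\tfrac12\ge\tfrac12$, so $\abs{\psi_a(\round{x}+j+iy)}\le\sinh(a(\abs{j}-\tfrac12))^{-1}$ and $\sum_{j\ne0}\abs{\psi_a(\round{x}+j+iy)}\le 2\sum_{j\ge1}\sinh(a(j-\tfrac12))^{-1}=:C'<\infty$, uniformly in $\round{x}$ and $y$. Combining these, $\sum_j\abs{\psi_a(\round{x}+j+iy)}\le\abs{\psi_a(\round{x}+iy)}+C'\le(1+C'/c_0)\,\abs{\psi_a(\round{x}+iy)}$, which is the first inequality of \eqref{eq:boundf}.

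Finally the second inequality of \eqref{eq:boundf} is elementary. The modulus identity gives $\abs{\psi_a(\round{x}+iy)}\le\min\{\abs{\sinh(a\round{x})}^{-1},\abs{\cos(ay)}^{-1}\}$. For the first entry, $\abs{\sinh(a\round{x})}\ge\abs{a\round{x}}$; for the second, $\abs{\cos(ay)}=\abs{\sin\!\bigl(\pi(\tfrac{ay}{\pi}-\tfrac12)\bigr)}=\abs{\sin\!\bigl(\pi\round{\tfrac{ay}{\pi}-\tfrac12}\bigr)}\ge 2\abs{\round{\tfrac{ay}{\pi}-\tfrac12}}$, where the last step uses $\abs{\sin\pi t}\ge2\abs t$ for $\abs t\le\tfrac12$ (concavity of $\sin$ on $[0,\pi/2]$). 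This produces the stated minimum and completes the proof. The only genuinely delicate ingredient is the uniform lower bound $c_0$ together with the uniform exponential control of the off-diagonal terms; everything else is bookkeeping with the modulus identity.
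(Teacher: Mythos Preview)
Your proof is correct, and for the meromorphic extension, the simplicity of the poles, and the second inequality in \eqref{eq:boundf} it proceeds essentially as the paper does.

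The one genuine difference is in the first inequality of \eqref{eq:boundf}. The paper factors out $\psi_a(\round{x}+iy)$ and then bounds each ratio $\abs{\psi_a(x-k+iy)}/\abs{\psi_a(\round{x}+iy)}$ individually: using the monotonicity of $y\mapsto (c+y)/(d+y)$ it shows this ratio is at most $\cosh(a\round{x})/\cosh(a(x-k))\le 2e^a e^{-a\abs{k-l}}$, which is summable. Your route is more elementary: you observe that $\abs{\psi_a(\round{x}+iy)}$ is uniformly bounded \emph{below} by $c_0=(\sinh^2(a/2)+1)^{-1/2}$ (since $\abs{\round{x}}\le\tfrac12$), while the off-diagonal sum $\sum_{j\ne0}\abs{\psi_a(\round{x}+j+iy)}$ is uniformly bounded \emph{above} by a constant $C'$; hence the whole sum is at most $(1+C'/c_0)\abs{\psi_a(\round{x}+iy)}$. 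Your argument avoids the ratio computation entirely and gives a shorter path to the same conclusion; the paper's ratio estimate, on the other hand, yields the slightly sharper information that each term is exponentially dominated by the central one, which is not needed here but could be useful if one wanted finer control of the series.
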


\begin{proof}  The meromorphic function $\psi _a(z)=\sech(a z)$ has simple poles on the
imaginary axis
at $\frac{i\pi}{a}\left(\frac{1}{2}+\bZ\right)$.
The identity
\begin{eqnarray*}
      \abs{\cosh a(x-k+iy)} &=& (\sinh^2 a(x-k) + \cos^2 a y)^{1/2}
\end{eqnarray*}
shows that $\abs{\psi _a (x-k+iy)} \lesssim e^{-a\abs{x-k}}$, if
$\abs{x-k} \geq 1$ and $y$ is arbitrary.
We consider the covering  of $\bC$ given by
\begin{align*}
U_{s,t} := \left\{x+iy \in \bC: \abs{x-s}< 3/4, \abs{y-\frac{\pi}{a}(t+1/2)} < \frac{3\pi}{4a} \right\},
\qquad s,t \in \bZ.
\end{align*}
On $U_{s,t}$, the partial sums
\begin{align*}
f_N(x+iy)=\sum_{k: \abs{k-s} \leq N } c_k \psi _a(x-k+iy)
\end{align*}
have at most a simple pole at $s + \tfrac{i\pi}{a}(\tfrac{1}{2} + t)$ and are otherwise analytic.
Since, for $x+iy \in U_{s,t}$,
\begin{align*}
\sum_{k: \abs{k-s} > N } \abs{c_k} \abs{\psi _a(x-k+iy)}
\lesssim \norm{c}_\infty \sum_{k: \abs{k-s} > N } e^{-a\abs{x-k}}
\lesssim \norm{c}_\infty e^{-N} \, ,
\end{align*}
the partial sums $f_N$ converge uniformly on
$U_{s,t} \setminus \left\{s + \tfrac{i\pi}{a}(\tfrac{1}{2} + t)\right\}$
to an analytic extension of $f$. More precisely,
\begin{align*}
\sup_{z \in U_{s,t} \setminus \left\{s + \tfrac{i\pi}{a}(1/2 + t)\right\}}
\abs{f(z)-f_N(z)} \longrightarrow 0, \qquad \mbox{as } N \longrightarrow \infty.
\end{align*}
(Note that this is stronger than the usual uniform convergence on
compact sets.)
This fact implies that $f$ has at most a simple pole at $z=s +
\frac{i \pi}{a}(1/2 + t)$. Hence,
$f$ is meromorphic on $\bC $ with at most simple poles in
$\bZ+\frac{i\pi}{a}\left(\frac{1}{2}+\bZ\right)$.

For the growth estimate \eqref{eq:boundf} we let
$x+iy \in \mathbb{C} \setminus P_f$ and write
 $x=l+\round{x}$ with
$l\in\bZ$ and $\round{x}\in [-1/2,1/2)$. Then we have
$$ \abs{f(x+iy)} \le \abs{\psi _a (\round{x}+iy)}\left(\abs{c_l}+
\sum_{k\ne l} \Big| \frac{c_k\psi _a (x-k+iy)}{\psi _a (\round{x}+iy)}
\Big| \right).$$
For all $k\ne l$ we observe that
$\abs{x-k} \geq \abs{l-k} -  \abs{\round{x}}\ge \frac{1}{2} \geq
  \abs{\round{x}}$. Therefore, we
   have
 $\sinh^2 a (x-k)\ge \sinh^2 a \round{x}$.
Since  the rational function $r(y)=\tfrac{c+y}{d+y}$ with $d\ge c\ge
0$, $d>0$  is increasing for $y>0$, we obtain that, for all $k\ne l$,
$$  \frac{\abs{\psi _a (x-k+iy)}^2}{\abs{\psi _a (\round{x}+iy)}^2} =
 \frac{\sinh^2 a \round{x}+\cos^2 a y}{\sinh^2 a(x-k)+\cos^2 a y}\le \frac{\sinh^2 a
\round{x}+1}{\sinh^2 a(x-k)+1}
 =\frac{\cosh^2 a\round{x}}{\cosh^2 a(x-k)},$$
and furthermore
$$\frac{\cosh a\round{x}}{\cosh a(x-k)}= \frac{e^{a\abs{\round{x}}}(1+e^{-2a\abs{\round{x}}})}{e^{
a\abs{x-k}}(1+e^{-2a\abs{x-k}})}
\le 2e^a e^{-a\abs{k-l}}.$$
 Therefore,  we have
 $$ \abs{c_l}+
\sum_{k\ne l} \left|\frac{c_k\psi _a (x-k+iy)}{\psi _a (\round{x}+iy)}\right| \le \|c\|_\infty
\left(1+ 2e^a\sum_{k\ne l} e^{-a\abs{k-l}}  \right) \le C\|c\|_\infty.$$
This proves the first inequality in \eqref{eq:boundf}. For the second inequality note that
$$ \abs{\sinh a x}\ge \abs{a  x}\quad\mbox{for all}\quad x\in\bR$$
and, by periodicity and elementary trigonometric  identities,
$$ \abs{\cos a y}=  \abs{\sin \pi \round{\tfrac{ay}{\pi}-\tfrac{1}{2}}} \ge 2\abs{\round{\tfrac{ay}{\pi}-\tfrac{1}{2}}}\quad\mbox{for all}\quad y\in
\bR.$$
Hence, we obtain
$$\abs{\psi _a (\round{x}+iy)}= \left( \sinh^2 a\round{x} +\cos^2 a y\right)^{-1/2}
\le \min\{\abs{ a \round{x}}^{-1},\abs{2\round{\tfrac{ay}{\pi}-\tfrac{1}{2}}}^{-1}\},$$
which gives the second inequality in \eqref{eq:boundf}.
\end{proof}

The following result is an analogue of Lemma \ref{l2} for $\sisp^\infty(\psi _a)$.

\begin{lemma} \label{l4}
Let $f\in \sisp ^\infty (\psi _a )$ and $\lambda \in \bR $ be a zero of
$f$ with multiplicity $m$. Then for every $l\in \tfrac{\pi}{a}\bZ $, $\lambda + il$
is a zero of the meromorphic extension of $f$ with the same multiplicity
$m$.
  \end{lemma}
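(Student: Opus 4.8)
The plan is to mimic the proof of Lemma~\ref{l2}, replacing the quasi-periodicity of the Gaussian under imaginary shifts by the corresponding (and in fact simpler) quasi-periodicity of $\psi_a=\sech(a\,\cdot)$. The crucial elementary fact is that for $l=\tfrac{\pi}{a}p$ with $p\in\bZ$ one has
\[
\cosh\bigl(a(z+il)\bigr)=\cosh(az+i\pi p)=(-1)^p\cosh(az),
\]
since $\cosh(i\pi p)=\cos(\pi p)=(-1)^p$ and $\sinh(i\pi p)=i\sin(\pi p)=0$. Consequently
\[
\psi_a(z+il)=(-1)^p\,\psi_a(z)
\]
as an identity of meromorphic functions on $\bC$.

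Next I would transfer this quasi-periodicity from the generator to an arbitrary $f=\sum_{k\in\bZ}c_k\psi_a(\cdot-k)\in\sisp^\infty(\psi_a)$. Applying the identity to each translate $\psi_a(\cdot-k)$ and summing gives
\[
f(z+il)=\sum_{k\in\bZ}c_k\,\psi_a(z-k+il)=(-1)^p\sum_{k\in\bZ}c_k\,\psi_a(z-k)=(-1)^p f(z).
\]
This manipulation is legitimate at the level of meromorphic functions: by Lemma~\ref{l3} the partial sums converge locally uniformly away from the pole set $P$, and the shift $z\mapsto z+il$ maps $P$ onto itself (because $l\in\tfrac{\pi}{a}\bZ$ while the poles have imaginary parts in $\tfrac{\pi}{a}(\tfrac12+\bZ)$), so both sides are meromorphic and agree wherever they are defined.

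Finally I would read off the statement about multiplicities. The poles of $f$ lie in $\bZ+\tfrac{i\pi}{a}(\tfrac12+\bZ)$, whose imaginary parts are half-integer multiples of $\tfrac{\pi}{a}$; hence for $\lambda\in\bR$ and $l\in\tfrac{\pi}{a}\bZ$ the point $\lambda+il$ is never a pole, and $f$ is holomorphic near both $\lambda$ and $\lambda+il$, so the zero multiplicity is unambiguous at each. Differentiating $f(w)=(-1)^p f(w-il)$ gives $f^{(j)}(\lambda+il)=(-1)^p f^{(j)}(\lambda)$ for every $j\ge 0$, whence $f$ vanishes to exactly order $m$ at $\lambda+il$ precisely when it does so at $\lambda$. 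I do not expect a genuine obstacle: unlike in Lemma~\ref{l2}, no polynomial-basis bookkeeping is needed, since the proportionality factor $(-1)^p$ is a constant. The only points requiring (routine) care are justifying the term-by-term shift for the meromorphic extension and checking that $\lambda+il$ avoids $P$.
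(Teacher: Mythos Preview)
Your proof is correct and follows essentially the same approach as the paper: both establish the quasi-periodicity $f(z+il)=(-1)^p f(z)$ from the identity $\cosh\bigl(a(z+il)\bigr)=(-1)^p\cosh(az)$ and then read off equality of multiplicities. The only cosmetic difference is that the paper verifies the identity for real $x$ and then compares Taylor coefficients at $\lambda$ and $\lambda+il$, whereas you state it directly for complex $z$ via the locally uniform convergence from Lemma~\ref{l3}; these are equivalent ways of saying the same thing.
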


\begin{proof}
For every $x\in\bR$ and $l=\tfrac{\pi t}{a}\in\tfrac{\pi}{a}\bZ$ we have
$$ \cosh  a(x+il)=\cosh  a x\cos  a l +i \sinh  a x\sin  a l= (-1)^t \cosh  a x.$$
Therefore, every
$f=\sum_{k\in\bZ} c_k \psi _a(\cdot-k) \in \sisp^\infty(\psi _a)$ satisfies
$$ f(x+il)=\sum_k c_k (-1)^t \psi _a  (x-k)=(-1)^t f(x).$$
This implies that the Taylor expansions of $f$ around $z_0= x\in\bR$ and around
$z_l= x+il$  have exactly the same coefficients, up to a factor $(-1)^t$. In particular,
$f^{(j)}(\lambda)=0$ holds for some $\lambda\in\bR$ and $j\ge 0$ if and only if
$f^{(j)}(\lambda+il)=0$
for all $l\in\tfrac{\pi}{a}\bZ$.
\end{proof}

Let $n(r)$ denote the difference of
the number of zeros and the number of poles of
$f$
in the closed disk $\overline{B(0,r)}$, counted with multiplicities. Jensen's formula for
meromorphic functions $f$ with
$f(0)\not\in
\{0,\infty\}$ says that
\begin{align} \label{jens2}
\int_0^r \frac{n(t)}{t} dt = \frac{1}{2\pi} \int_0^{2\pi} \log\abs{f(r
  e^{i\theta})} d\theta - \log\abs{f(0)},
\end{align}
see e.g. \cite[pages 4--6]{hayman59}.
The special case $f(0)=0$ or $\infty$ is treated as follows:
if $f$ has a zero or pole at $0$, choose $m\in\bZ$ such that $\lim_{z\to 0} f(z)/z^m=C_m\ne
0$.
Then
\begin{align} \label{jens3}
\int_0^r \frac{n(t)}{t} dt = \frac{1}{2\pi} \int_0^{2\pi} \log\abs{f(r
  e^{i\theta})} d\theta -
\log\abs{C_m} -m\log r.
\end{align}

After this excursion to meromorphic functions  we can now prove an analogue of Theorem~\ref{tm1}.

\begin{tm} \label{th_zeros_sec}
Let $f\in \sisp ^\infty (\psi _a )\setminus \{0\}$ and $N_f$ its set of real zeros
with multiplicities $\mult_f$. Then $D^-(N_f,\mult_f) \leq 1$.
\end{tm}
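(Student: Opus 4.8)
The plan is to follow the proof of Theorem~\ref{tm1} almost verbatim, feeding in the meromorphic data of Lemmas~\ref{l3} and~\ref{l4} in place of the entire-function data, and using the meromorphic Jensen formula \eqref{jens2}--\eqref{jens3} in place of \eqref{jens}. I argue by contradiction: suppose $D^-(N_f,\mult_f)>1$ and fix $\nu>1$ and $R_0$ so that $\sum_{\lambda\in N_f\cap[x,x+r]}\mult_f(\lambda)\ge\nu r$ for all $x\in\bR$ and $r\ge R_0$. By Lemma~\ref{l4}, each real zero $\lambda$ of multiplicity $m$ generates zeros of the same multiplicity at all points $\lambda+il$, $l\in\tfrac{\pi}{a}\bZ$, of the meromorphic extension. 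After replacing $f$ by an integer shift (as at the end of the proof of Theorem~\ref{tm1}) we may assume $f(0)\ne 0$; note that $0$ is never a pole, since $0\notin P=\bZ+\tfrac{i\pi}{a}(\tfrac{1}{2}+\bZ)$. We then apply \eqref{jens2} on circles $\abs{z}=R$.

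For the left-hand side I bound $n(r)=Z(r)-P(r)$, where $Z(r)$ and $P(r)$ count the zeros and poles of $f$ in $\overline{B(0,r)}$ with multiplicity. The lower bound on $Z(R^2)$ comes from the identical Riemann-sum computation of Theorem~\ref{tm1}: partitioning $[-R^2,R^2)$ into intervals of length $R$ and counting, for each real zero in $[kR,(k+1)R)$, its $\ge\tfrac{2a}{\pi}\sqrt{R^4-(k+1)^2R^2}-1$ conjugate zeros $\lambda+il$ lying in $B(0,R^2)$, one gets $Z(R^2)\ge a\nu R^4(1-\epsilon)$, that is $Z(r)\gtrsim a\nu r^2$. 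For the poles I use Lemma~\ref{l3}: since $P_f\subseteq P$ is contained in a lattice of density $\tfrac{a}{\pi}$, one has $P(r)\le\abs{P\cap\overline{B(0,r)}}\le ar^2(1+\epsilon)$ for large $r$. Crucially, the generated zeros sit at the integer heights $\tfrac{\pi}{a}\bZ$ while the poles sit at the half-integer heights $\tfrac{\pi}{a}(\tfrac{1}{2}+\bZ)$, so no zero is cancelled by a pole. Hence $n(r)\ge a(\nu-1-2\epsilon)r^2=:a\delta r^2$ with $\delta>0$ for small $\epsilon$, and the left-hand side $\int_0^R n(r)/r\,dr$ grows at least like $a\delta R^2/2$.

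The main obstacle is the upper bound on the right-hand side of \eqref{jens2}, and this is exactly where the hyperbolic secant diverges from the Gaussian. Instead of the quadratic growth $e^{ay^2}$, Lemma~\ref{l3} shows that $f$ is essentially bounded, so I expect the mean $\tfrac{1}{2\pi}\int_0^{2\pi}\log^+\abs{f(Re^{i\theta})}\,d\theta$ to stay bounded as $R\to\infty$. To see this I insert the estimate $\log^+\abs{f(x+iy)}\le\log^+\big(C\|c\|_\infty\big)+\log^+\big(\abs{2\round{\tfrac{ay}{\pi}-\tfrac{1}{2}}}^{-1}\big)$ coming from \eqref{eq:boundf}; the second term depends only on $y=R\sin\theta$, is periodic in $y$ with period $\tfrac{\pi}{a}$, and has an integrable logarithmic singularity at each pole height. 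Passing to the variable $u=R\sin\theta$, for which $\abs{d\theta}=(R^2-u^2)^{-1/2}\abs{du}$, the $\approx\tfrac{2aR}{\pi}$ periods swept across $(-R,R)$ each contribute $O(1/R)$, so the total is $O(1)$; one only needs to select a sequence $R\to\infty$ for which the top of the circle avoids the pole heights, so that the endpoint contributions near $u=\pm R$ remain bounded. Therefore the right-hand side of \eqref{jens2} is $O(1)$ (the extra $-m\log R$ term from \eqref{jens3}, needed only in the degenerate case, being negligible), which is incompatible with the quadratic growth of the left-hand side. This contradiction forces $D^-(N_f,\mult_f)\le 1$.
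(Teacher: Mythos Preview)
Your argument follows essentially the same route as the paper: contradiction via Jensen's formula for meromorphic functions, with the zero count coming from Lemma~\ref{l4} together with the Riemann-sum computation of Theorem~\ref{tm1}, and the pole count from the lattice structure in Lemma~\ref{l3}. The zero/pole separation you note and the reduction to $f(0)\ne0$ are both fine.

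The one step that is not watertight is the upper bound on $\tfrac{1}{2\pi}\int_0^{2\pi}\log\abs{f(Re^{i\theta})}\,d\theta$. You use only the $y$-half of the $\min$ in \eqref{eq:boundf} and then substitute $u=R\sin\theta$; but the claim that ``each of the $\approx 2aR/\pi$ periods contributes $O(1/R)$'' fails for the periods near $u=\pm R$, where the Jacobian $(R^2-u^2)^{-1/2}\asymp (R(R-\abs{u}))^{-1/2}$ is of order $R^{-1/2}$, not $R^{-1}$. Placing $R$ midway between pole heights controls only the literal top and bottom of the circle; the nearest pole-height singularity still sits at distance $\pi/(2a)$ from $\pm R$, and its period contributes $\asymp R^{-1/2}$. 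Your conclusion does survive, since summing the contributions $\asymp(Rk)^{-1/2}$ over the $O(R)$ endpoint periods still gives $O(1)$, but this computation has to be carried out. The paper avoids the issue entirely in Lemma~\ref{prop:boundint} by exploiting \emph{both} halves of the $\min$ in \eqref{eq:boundf}: the $y$-bound on the arcs $I_0\cup I_2$ near the real axis and the $x$-bound $\abs{f(x+iy)}\le C\|c\|_\infty\abs{a\round{x}}^{-1}$ on $I_1\cup I_3$ near the imaginary axis. On each pair of arcs the integration variable stays in $[-R/\sqrt{2},R/\sqrt{2}]$, so the Jacobian is uniformly $\le\sqrt{2}/R$, the period estimate becomes exact, and no special sequence of radii is needed.
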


The main part of the proof is an estimate of the integral in Jensen's formula.

\begin{lemma}\label{prop:boundint}
 For every $f\in V^\infty(\psi _a)$  we have
\begin{equation}\label{eq:boundint}
\sup_{r>1} \frac{1}{2\pi} \int_0^{2\pi} \log\abs{f(re^{i\theta})}\,d\theta <\infty.
\end{equation}
\end{lemma}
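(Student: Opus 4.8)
The plan is to bound the positive part of $\log|f|$ and to feed in the pointwise growth estimate of Lemma~\ref{l3}. Since $\log|f|\le\log^+|f|$ pointwise, it suffices to bound $\frac{1}{2\pi}\int_0^{2\pi}\log^+|f(re^{i\theta})|\,d\theta$ from above, uniformly in $r>1$. Writing $f=\sum_k c_k\psi_a(\cdot-k)$ and using the first inequality in \eqref{eq:boundf}, namely $|f(x+iy)|\le C\|c\|_\infty\,|\psi_a(\round{x}+iy)|$, we get
\[
\log^+|f(x+iy)|\le \log^+(C\|c\|_\infty)+g(x,y),\qquad g(x,y):=\log^+|\psi_a(\round{x}+iy)|.
\]
The constant term contributes a bounded amount, so the whole problem reduces to bounding $\frac{1}{2\pi}\int_0^{2\pi} g(r\cos\theta,r\sin\theta)\,d\theta$ uniformly in $r>1$.

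The key structural observation is that $g$ is nonnegative and doubly periodic: it has period $1$ in $x$ (because of $\round{\cdot}$) and period $\pi/a$ in $y$ (because $|\psi_a(w+i\pi/a)|=|\psi_a(w)|$). Moreover $g(x,y)>0$ exactly where $\sinh^2(a\round{x})+\cos^2(ay)<1$, which confines the positivity support of $g$ to small neighborhoods of the pole lattice $P=\bZ+\frac{i\pi}{a}(\frac12+\bZ)$. Tiling $\bC$ by the unit cells centered at the points of $P$, each cell contains exactly one (simple) pole $z_0$, and by periodicity the behavior of $g$ near $z_0$ is a translate of its behavior near the single pole of a fundamental cell. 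Since that pole is simple, this yields a uniform local majorant $g(x,y)\le \log^+\!\big(C'/|z-z_0|\big)$ throughout the cell of $z_0$, with $C'$ independent of $z_0$ (concretely, all residues of $\sech(az)$ have modulus $1/a$).

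I would then convert the angular average into an arc-length average over the circle $C_r=\{|z|=r\}$. Since $ds=r\,d\theta$,
\[
\frac{1}{2\pi}\int_0^{2\pi} g(r\cos\theta,r\sin\theta)\,d\theta=\frac{1}{2\pi r}\int_{C_r} g\,ds=\frac{1}{2\pi r}\sum_{z_0\in P}\int_{C_r\cap\,\mathrm{cell}(z_0)} g\,ds.
\]
Only $O(r)$ cells meet $C_r$, since the relevant poles lie in an annulus of bounded width and hence area $O(r)$. For each such cell the contribution is bounded by a universal constant: parametrizing the bounded-length arc $C_r\cap\mathrm{cell}(z_0)$ by arc length $s$ measured from the point of closest approach, one has $|z(s)-z_0|\ge c''|s|$ for $r>1$, whence $\int_{C_r\cap\,\mathrm{cell}(z_0)} g\,ds\le \int_{-L_0}^{L_0}\log^+\!\big(C'/(c''|s|)\big)\,ds=:K<\infty$, uniformly in $z_0$ and in the distance from $C_r$ to $z_0$. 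Summing gives $\int_{C_r} g\,ds\le K\cdot O(r)$, and division by $2\pi r$ produces the desired bound, uniform in $r>1$.

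The main obstacle is precisely this uniform per-cell estimate when $C_r$ grazes or passes through a pole: the arc-length integral of the logarithmic singularity must remain bounded independently of the distance from the circle to the pole, which is exactly why \emph{simple} poles (giving an integrable $\log|s|$ singularity along a curve) are essential here. The secondary point requiring care is the counting argument showing that only $O(r)$ cells are relevant, so that the intrinsic factor $1/r$ in the angular average cancels the linear growth in the number of cells; for the remaining small range $r\in(1,2]$ one simply bounds the finitely many contributing cells directly.
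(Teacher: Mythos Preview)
Your approach is correct and gives a genuine alternative to the paper's argument. The paper uses the second inequality in \eqref{eq:boundf}, splits the circle into four arcs $I_j=[-\tfrac{\pi}{4},\tfrac{\pi}{4}]+\tfrac{j\pi}{2}$, and on each pair of opposite arcs parametrizes by the coordinate ($y$ on $I_0\cup I_2$, $x$ on $I_1\cup I_3$) transverse to the arc, reducing to explicit one-dimensional integrals of $|\log|2\{u\}||$ over intervals of length $\sqrt{2}\,r$; these are bounded by $O(r)$ and the Jacobian $1/\sqrt{r^2-y^2}\le \sqrt{2}/r$ supplies the compensating $1/r$. Your route is more geometric: using the first (sharper) inequality in \eqref{eq:boundf}, you convert to an arc-length integral, tile by cells of the pole lattice, and bound each cell's contribution via the local integrability of $\log(1/|s|)$ along a curve grazing a simple pole; the $O(r)$ count of contributing cells then cancels the $1/(2\pi r)$ in the angular average. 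The paper's computation produces explicit constants, whereas your argument isolates the structural reason the bound holds (simple poles give arc-length--integrable logarithmic singularities) and would transfer more directly to other generators with a similar pole pattern. The one detail worth making explicit in a full write-up is the uniformity of $c''$ in $|z(s)-z_0|\ge c''|s|$: from $|z(s)-z_0|^2=d^2+4r|z_0|\sin^2(s/(2r))$ one needs $|z_0|/r$ bounded below, which holds because the relevant poles satisfy $|z_0|\ge r-O(1)$, hence $|z_0|/r\ge 1/2$ for $r$ larger than a fixed threshold.
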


\begin{proof}
We divide the integral into four pieces corresponding to
$$ \theta\in I_j=\left[-\frac{\pi}{4},\frac{\pi}{4}\right]+\frac{j\pi}{2},\qquad j=0,1,2,3.$$
For $\theta \in I_0\cup I_2$, we let
$$ r e^{i\theta} = \pm\sqrt{r^2-y^2}+iy\quad\mbox{where}\quad y\in
\left[-\frac{r}{\sqrt{2}},\frac{r}{\sqrt{2}}\right].$$
By  \eqref{eq:boundf}, we have
$$ \log \abs{f(re^{i\theta})} \le \log (C\|c\|_\infty) - \log \abs{2\round{\tfrac{ay}{\pi}-\tfrac{1}{2}}}$$
and (using $d\theta = \pm dy/\sqrt{r^2-y^2}$)
$$\frac{1}{2\pi} \int_{I_0\cup I_2} \log\abs{f(r e^{i\theta})}\,d\theta
\le \frac{1}{2}\log (C\|c\|_\infty) -
\frac{1}{\pi} \int_{-\frac{r}{\sqrt{2}}}^{\frac{r}{\sqrt{2}}} \log
\abs{2\round{\tfrac{ay}{\pi}-\tfrac{1}{2}}}\,\frac{dy}{\sqrt{r^2-y^2}}.$$
Note that $\log \abs{2\round{\tfrac{ay}{\pi}-\tfrac{1}{2}}}\le 0$ and $\sqrt{r^2-y^2}\ge r/\sqrt{2}$ for all $y\in
\left[-\tfrac{r}{\sqrt{2}},\tfrac{r}{\sqrt{2}}\right]$.
Therefore,
$$ - \frac{1}{\pi} \int_{-\frac{r}{\sqrt{2}}}^{\frac{r}{\sqrt{2}}}
\log
\abs{2\round{\tfrac{ay}{\pi}-\tfrac{1}{2}}}\,\frac{dy}{\sqrt{r^2-y^2}}
\le \frac{\sqrt{2}}{\pi r}
 \int_{-\frac{r}{\sqrt{2}}}^{\frac{r}{\sqrt{2}}} \Big|  \log
   \abs{2\round{\tfrac{ay}{\pi}-\tfrac{1}{2}}}\Big| \,dy .$$
For the last integral, we use the substitution $u=\tfrac{ay}{\pi}$ and
observe that the resulting integrand is even and  periodic with period $1$. This gives for all $c<d$
\begin{eqnarray*}
 \int_c^d \abs{ \log \abs{2\round{\tfrac{ay}{\pi}-\tfrac{1}{2}}}}\,dy
&=& \frac {\pi}{a} \int_{\tfrac{ac}{\pi}}^{\tfrac{ad}{\pi}} \abs{ \log \abs{2\round{u-\tfrac{1}{2}}}}\,du \\[5pt]
&\le&
\frac {2\pi}{a} \left(\frac{ad}{\pi}-\frac{ac}{\pi}+1\right)  \int_0^{1/2}  \abs{ \log  (2u)}\,du =d-c+\frac{\pi}{a},
\end{eqnarray*}
and finally
$$\frac{1}{2\pi} \int_{I_0\cup I_2} \log|f(re^{i\theta})|\,d\theta
\le \frac{1}{2}\log (C\|c\|_\infty) +
\frac{\sqrt{2}}{\pi r}\left(\sqrt{2} r+\frac{\pi}{a}\right). $$
In the same way, for $\theta \in I_1\cup I_3$ we let
$$ r e^{i\theta} = x \pm i\sqrt{r^2-x^2}\quad\mbox{where}\quad x\in
\left[-\frac{r}{\sqrt{2}},\frac{r}{\sqrt{2}}\right],$$
and obtain from \eqref{eq:boundf}
$$\frac{1}{2\pi} \int_{I_1\cup I_3} \log\abs{f(r e^{i\theta})}\,d\theta
\le \frac{1}{2}\log (C\|c\|_\infty) -
\frac{1}{\pi} \int_{-\frac{r}{\sqrt{2}}}^{\frac{r}{\sqrt{2}}} \log
\abs{a\round{x}}\,\frac{dx}{\sqrt{r^2-x^2}}.$$
The same techniques as before give
$$ - \frac{1}{\pi} \int_{-\frac{r}{\sqrt{2}}}^{\frac{r}{\sqrt{2}}}
\log \abs{a\round{x}}\,\frac{dx}{\sqrt{r^2-x^2}}  \le
\frac{\sqrt{2}}{\pi r}
 \int_{-\frac{r}{\sqrt{2}}}^{\frac{r}{\sqrt{2}}} \Big|  \log
 \abs{a\round{x}} \Big| \,dx $$
and, for every $d>c$,
\begin{eqnarray*}
 \int_c^d \abs{ \log \abs{a\round{x}}}\,dx
&\le&  (d-c)\abs{\log (a/2)}+ \int_c^d \abs{ \log \abs{2\round{x}}}\,dx \\
&\le&  (d-c)(\abs{\log (a/2)}+ 2(d-c+1)\int_0^{1/2} \abs{ \log
      (2u)}\,du \\
&\le &  \left(d-c+1\right) \left(1+\abs{\log (a/2)}\right) .
\end{eqnarray*}
Hence, we obtain
$$\frac{1}{2\pi} \int_{I_1\cup I_3} \log|f(re^{i\theta})|\,d\theta
\le \frac{1}{2}\log (C\|c\|_\infty) +
\frac{\sqrt{2}}{\pi r}\left(\sqrt{2} r+1\right)\left(1+\abs{\log (a/2)}\right). $$
Combining both integrals, we get for $r\ge 1$
$$ \frac{1}{2\pi} \int_{0}^{2\pi} \log|f(re^{i\theta})|\,d\theta \le \log (C\|c\|_\infty )
+
\frac{\sqrt{2}}{\pi r}\left(2\sqrt{2} r+1+\frac{\pi}{a}+
(\sqrt{2} r+1)\abs{\log (a/2)}\right) ,  $$
which is bounded for $r\geq 1$. This completes the proof.
\end{proof}

\begin{proof}[Proof of Theorem \ref{th_zeros_sec}] Assume $D^-(N_f)>1$. Let $n_z(r)$ denote the
number
of zeros of $f$ in $\overline{B(0,r)}$ and
$n_p(r)$ the number of poles in that disk (both counted with multiplicities). The same
counting argument as in
the proof of Theorem \ref{tm1},
this time invoking Lemma \ref{l4},
gives
$$ n_z(r) \ge a\beta r^2$$
for some $\beta>1$ and for all $r\ge R_1$. On the other hand,
by Lemma \ref{l3}, the poles of $f$
are simple and contained in the shifted lattice
$\mathcal{L}:=\bZ+\tfrac{i\pi}{a}(\tfrac{1}{2}+\bZ)$. To find an upper
bound for $n_p(r)$, we  place  rectangles
$Q_x=x+[-\tfrac{1}{2},\tfrac{1}{2}]\times
[-\tfrac{\pi}{2a},\tfrac{\pi}{2a}]$ of area $|Q_x|=\tfrac{\pi}{a}$ and
diagonal $\sqrt{1 + \pi ^2  / a^2}$ around
each pole and observe that
$$n_p(r)=   \frac{a}{\pi} \left| \bigcup_{x\in \mathcal{L}\cap\overline{ B(0,r)}} Q_x\right|
 \le \frac{a}{\pi}~ \left|B\left(0,r+\tfrac{1}{2} \sqrt{1 + \pi ^2  /
       a^2}
   \right)\right| = a\left(r+\tfrac{1}{2} \sqrt{1 + \pi ^2  /
       a^2}
   \right)^2.$$
 As a consequence, $n(r)=n_z(r)-n_p(r)\ge (\beta-1)a r^2-cr$,
for some constant $c>0$, and
$$  \int_{R_1}^{R} \frac{n(r)}{r}\,dr \ge \frac{(\beta-1)a (R^2-R_1^2)}{2}-c(R-R_1).$$
Due to Lemma  \ref{prop:boundint},
for $R \gg 1$ this contradicts Jensen's formula \eqref{jens3}.
\end{proof}

\subsection{Transference of zero sets}
The following lemma modifies \cite[Lemma 5.1]{grrost17} to include
multiplicities and allows us to compare the density of zero sets in
different shift-invariant spaces.
\begin{lemma}\label{Rolle}
Let $f\in C^{\infty}(\bR)$ be real-valued and $\mult_f:N_f\to\bN$ be the multiplicity function
of its zeros. For $a\in \bR $ let
$g=\left(aI+\frac{d}{dx}\right)f$.
Then
\begin{align} \label{c1}
    D^-(N_{g},\mult_g)\ge D^-(N_f,\mult_f).
\end{align}
For $f \in C^{N-1}(\bR)$ the same statement holds,
replacing $m_f$ and $m_g$ by the multiplicity functions
of the zeros of height at most $N$ and $N-1$, respectively.
\end{lemma}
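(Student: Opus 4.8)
The plan is to remove the zeroth-order term $aI$ by an integrating factor and then run a Rolle-type counting argument. First I would set $h(x):=e^{ax}f(x)$. Since $(e^{ax}f)'=e^{ax}(af+f')=e^{ax}g$, we have $g=e^{-ax}h'$. The factors $e^{\pm ax}$ are smooth and nowhere vanishing, so multiplying by them neither creates nor destroys zeros and, by Leibniz's rule together with $e^{a\lambda}\neq 0$, preserves the order of vanishing at every point (and likewise the capped orders in the $C^{N-1}$ setting, which are determined by derivatives up to order $N-1$). Hence $N_f=N_h$ and $N_g=N_{h'}$ with identical multiplicity functions, and it suffices to prove the inequality for the pure derivative, i.e.\ to show $D^{-}(N_{h'},m_{h'})\ge D^{-}(N_h,m_h)$.

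The core is a counting estimate on a fixed interval. Fix $x\in\bR$, $r>0$, and write $I=[x-r,x+r]$; let $\lambda_1<\dots<\lambda_k$ be the zeros of $h$ in $I$ with multiplicities $p_i=m_h(\lambda_i)$. Two mechanisms produce zeros of $h'$ inside $I$: at each $\lambda_i$ the derivative $h'$ vanishes to order $p_i-1$, and by Rolle's theorem $h'$ has at least one zero in each open gap $(\lambda_i,\lambda_{i+1})$. The gap zeros lie strictly between consecutive $\lambda_i$, hence are distinct from one another and from the $\lambda_i$, so no point is counted twice. Summing, the number of zeros of $h'$ in $I$, counted with multiplicity, is at least $\sum_{i=1}^{k}(p_i-1)+(k-1)=\sum_{i=1}^{k}p_i-1$, that is,
\[
\sum_{\mu\in N_{h'}\cap I} m_{h'}(\mu)\;\ge\;\Big(\sum_{\lambda\in N_h\cap I} m_h(\lambda)\Big)-1.
\]
Dividing by $2r$, taking $\inf_{x\in\bR}$ and then $\liminf_{r\to\infty}$, the term $-1/(2r)$ vanishes and yields $D^{-}(N_{h'},m_{h'})\ge D^{-}(N_h,m_h)$, proving the $C^\infty$ statement.

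For the truncated $C^{N-1}$ version only the bookkeeping with capped multiplicities changes, and this is the step I expect to need the most care. Here $h\in C^{N-1}$, $h'\in C^{N-2}$, and the relevant quantities are $m_h^{(N)}(\lambda)=\min\{m_h(\lambda),N\}$ and $m_{h'}^{(N-1)}(\mu)=\min\{m_{h'}(\mu),N-1\}$, both detectable from the available derivatives. The key point is that differentiation lowers the capped multiplicity by exactly one across the threshold: if the true vanishing order $q_i$ of $h$ at $\lambda_i$ satisfies $q_i<N$, then $h'$ vanishes to exact order $q_i-1<N-1$, both caps are inactive, and $m_{h'}^{(N-1)}(\lambda_i)=q_i-1=m_h^{(N)}(\lambda_i)-1$; while if $q_i\ge N$, then $m_h^{(N)}(\lambda_i)=N$ and $h'$ vanishes through order $N-1$, giving $m_{h'}^{(N-1)}(\lambda_i)=N-1=m_h^{(N)}(\lambda_i)-1$ as well. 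With this uniform identity (and Rolle's theorem still applying, since it needs only $h\in C^1$, i.e.\ $N\ge 2$), the interval count $\sum_i(p_i-1)+(k-1)=\sum_i p_i-1$ goes through verbatim with $p_i=m_h^{(N)}(\lambda_i)$ and the capped multiplicities of $h'$, and the same passage to the density limit completes the proof. The genuinely delicate point is precisely this uniform ``$-1$'' behaviour of the cap at $q_i=N$, ensuring no multiplicity is lost when the true order exceeds the cap.
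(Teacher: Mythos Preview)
Your proof is correct and follows essentially the same approach as the paper: the same integrating-factor reduction $h=e^{ax}f$, $g=e^{-ax}h'$, followed by the Rolle-type count $\sum_i(p_i-1)+(k-1)=\sum_i p_i-1$ on an interval. The only minor difference is that the paper works with an arbitrary \emph{finite} subset $F\subseteq N_h\cap[x-R,x+R]$ rather than listing all zeros $\lambda_1<\dots<\lambda_k$, which sidesteps the (trivially handled) possibility that $N_h$ is not locally finite; your treatment of the capped $C^{N-1}$ case is in fact more detailed than the paper's.
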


\begin{proof}
Let $f\in C^{\infty}(\bR)$.
Note that $aI +  \tfrac{d}{dx} =  e^{-ax} \tfrac{d}{dx} e^{ax}$.
We define  $h\in C^{\infty}(\bR)$ by $h(x)= e^{ ax}f(x)$ and note that
$N_h = N_f$ with equal
 multiplicities $m_h=\mult_f$. Furthermore,
 since
$$ g(x)= \left(aI+\frac{d}{dx}\right)f(x)= e^{- ax}h'(x) ,
$$
we conclude that $N_g=N_{h'}$, again  with equal multiplicities $\mult_g=m_{h'}$.
It remains to show that $D^-(N_{h'},m_{h'})\ge D^-(N_h,m_h)$.

Let $x\in\bR$, $R>0$, and $F \subseteq N_{h}\cap[x-R,x+R]$ a finite subset.
All zeros $x$ of $h$ with multiplicity $m_h(x)>1$ are zeros of $h'$ with multiplicity
$m_{h'}(x)=m_h(x)-1$.
Since the zeros of $h$ and $h'$ are interlaced by Rolle's theorem, we
obtain
additional zeros $\tilde F\subset [x-R,x+R]\setminus F$ of $h'$ with
cardinality $\# \tilde F =  \#F-1$. Combining both types of zeros of
$h'$ gives
$$
     \sum_{x\in F\cup \tilde F} m_{h'}(x) \ge  \left(\sum_{x\in F} m_{h}(x)\right)-1.
$$
Since this holds for every finite subset $F \subseteq
N_{h}\cap[x-R,x+R]$,
it follows that
$D^-(N_{h'},m_{h'})\geq  D^-(N_h,m_h)$, and
$$
D^-(N_g,\mult_g) =  D^-(N_{h'},m_{h'})\ge D^-(N_h,m_h) = D^-(N_f,\mult_f) \, ,
$$
as claimed. Finally, for $f \in C^{N-1}(\bR)$,
$g \in C^{N-2}(\bR)$, and
the same argument applies to the multiplicity functions of zeros of height $N$ and $N-1$.
\end{proof}

Although generically one would expect  equality in \eqref{c1}, the
density of the zero set may actually jump.   Let   $f(x) =
\sum_{k\in \Zst} e^{-\pi (x-k)^2} \in V^\infty  (\phi _\pi )$,  and $h=f'
\in V^\infty (\phi '_\pi )$. Then $f$ is a non-constant, strictly
positive,  periodic,
real-valued function, and we have $N_f = \emptyset$ and $D^-(N_f) = 0$. Since $f$ assumes two
extremal values in $[0,1)$, we have $D^-(N_h)=2$, in fact, $N_h =
\tfrac{1}{2} \Zst $.  This example explains why the methods of this
paper cannot be applied directly to sampling in shift-invariant spaces
generated by Hermite functions. Indeed, Theorem~\ref{th_samp_der_tp} does not have a
direct analog for $\sisp (h_n)$ with the $n$-th Hermite function $h_n,
n>0$.

\subsection{Totally positive functions of Gaussian type}
We next study shift-invariant spaces generated by a totally positive
function of Gaussian type and their density of zeros.
\begin{tm}
\label{th_zeros_tp}
Let $g$ be a totally positive function of Gaussian type.
Let $f\in \sisp ^\infty (g)\setminus\{0\}$ be real-valued and $(N_f, \mult_f)$ its set of real zeros
counted with multiplicities. Then $D^-(N_f, \mult_f) \leq 1$.

In particular, if $D^-(\Lambda )>1$, then  $\Lambda $ is a uniqueness
set for $\sisp ^\infty (g)$.
\end{tm}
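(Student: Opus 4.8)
The plan is to reduce the totally positive case to the Gaussian case already settled in Theorem~\ref{tm1}, by means of a constant-coefficient differential operator that carries $V^\infty(g)$ into $V^\infty(\phi_a)$. Concretely, set $a:=\pi^2/c$, so that the Gaussian $\phi_a(x)=e^{-ax^2}$ has $\hat\phi_a(\xi)=\sqrt{\pi/a}\,e^{-c\xi^2}$. Then by the factorization~\eqref{eq:tpaa},
\[
\hat\phi_a(\xi)=\sqrt{\pi/a}\,\prod_{j=1}^n(1+2\pi i\delta_j\xi)\,\hat g(\xi).
\]
Since multiplication by $2\pi i\xi$ on the Fourier side corresponds to $\tfrac{d}{dx}$, the operator $L:=\prod_{j=1}^n\bigl(I+\delta_j\tfrac{d}{dx}\bigr)$ satisfies $Lg=\sqrt{\pi/a}\,\phi_a$. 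For $f=\sum_k c_k g(\cdot-k)\in V^\infty(g)$, termwise differentiation is justified because $g$ is a Schwartz function (its Fourier transform is a Gaussian times a rational factor), so $g$ and all its derivatives decay rapidly and the series converge in $C^\infty$; this gives $Lf=\sqrt{\pi/a}\,\sum_k c_k\phi_a(\cdot-k)\in V^\infty(\phi_a)$. As the integer shifts of $\phi_a$ form a Riesz sequence, $Lf\not\equiv0$ whenever $f\not\equiv0$; and since every $\delta_j$ is real and $f$ is real-valued, $Lf$ is real-valued.

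Next I would transfer the density of zeros. Each factor with $\delta_j\ne0$ equals $\delta_j(\tfrac{1}{\delta_j}I+\tfrac{d}{dx})$, a nonzero scalar multiple of an operator of the type appearing in Lemma~\ref{Rolle}, while factors with $\delta_j=0$ are the identity. Because scaling by a nonzero constant changes neither the zero set nor its multiplicities, one application of Lemma~\ref{Rolle} per nontrivial factor yields $D^-(N_{Lf},\mult_{Lf})\ge D^-(N_f,\mult_f)$, where every function occurring in the chain is smooth and real-valued so that the hypotheses of Lemma~\ref{Rolle} hold at each step. Applying Theorem~\ref{tm1} to $Lf\in V^\infty(\phi_a)\setminus\{0\}$ gives $D^-(N_{Lf},\mult_{Lf})\le1$, and combining the two inequalities yields $D^-(N_f,\mult_f)\le1$.

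For the uniqueness statement, note that $g$ is real-valued, since $\hat g(-\xi)=\overline{\hat g(\xi)}$; hence any $f\in V^\infty(g)$ decomposes into real and imaginary parts, each again in $V^\infty(g)$. If some set $\Lambda$ with $D^-(\Lambda)>1$ failed to be a uniqueness set, a nonzero $f$ would vanish on $\Lambda$, and so would its nonzero real or imaginary part $\tilde f\in V^\infty(g)$; then $\Lambda\subseteq N_{\tilde f}$ forces $D^-(N_{\tilde f},\mult_{\tilde f})\ge D^-(\Lambda)>1$, contradicting the bound just established.

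The main obstacle is the transference step: one must ensure that at every stage the function fed into Lemma~\ref{Rolle} is genuinely real-valued and smooth, so that Rolle's theorem and the interlacing of zeros are available, and that the terminal function $Lf$ is not identically zero. Both points rest on the Schwartz regularity of $g$ and on the Riesz-sequence property of the Gaussian shifts, which is exactly what keeps the chain of differentiations from collapsing the zero density to an uninformative value.
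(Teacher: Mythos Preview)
Your proposal is correct and follows essentially the same route as the paper: apply the differential operator $L=\prod_j(I+\delta_j\tfrac{d}{dx})$ to pass from $V^\infty(g)$ to $V^\infty(\phi_a)$, invoke Lemma~\ref{Rolle} iteratively to compare zero densities, and conclude with Theorem~\ref{tm1}. If anything, you are slightly more explicit than the paper about the rescaling $I+\delta_j\tfrac{d}{dx}=\delta_j(\tfrac{1}{\delta_j}I+\tfrac{d}{dx})$ needed to match the form of Lemma~\ref{Rolle}, and about the real-valuedness at each intermediate step.
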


\begin{proof}
  The proof is an adaption of the argument in \cite{grrost17} using
  multiplicities. Recall that $g$ is real-valued and has stable
  integer shifts. Let $c\in\ell^\infty(\bZ)$ and assume that $f=\ctilg \in \sisp^\infty(g)$
vanishes on $N_f\subset \bR$ with $D^-(N_f,\mult_f ) >1$. We want to show that $f \equiv 0$.
Note that $f\in C^\infty(\bR)$. Since $g$ is real-valued, we may
assume without loss of generality that  $f$ is also real-valued (by  replacing $c_k$ by
$\Re(c_k)$ or $\Im(c_k)$ if necessary).

Using \eqref{eq:tpaa}, write
\begin{equation}\label{eq:ch1a}
  \hat g(\xi)= \prod_{j=1}^n (1+2\pi i\delta_j\xi)^{-1} \, \hat\phi(\xi),\qquad
\delta_1,\ldots,\delta_n\in\bR\setminus\{0\}, \ c >0,
\end{equation}
where $\hat{\phi}(\xi)=e^{-c\xi^2}$. In other words, $\phi  =
\prod_{j=1}^n \left(I+\delta_j \tfrac{d}{dx}\right) g $ is a Gaussian.
Since  $\phi$, $g$, and their derivatives decay exponentially, we may
interchange summation and differentiation in $f$, and obtain that
$$
h = \prod_{j=1}^n \left(I+\delta_j \frac{d}{dx}\right) f \in \sisp^\infty(\phi).
$$
The repeated use of Lemma \ref{Rolle} implies that $D^-(N_h,\mult_h) \geq D^-(N_{f},\mult_f)>1$. Hence, by
Theorem \ref{tm1}, $h=\sum_k c_k \phi(\cdot-k) \equiv 0$. Hence $c_k \equiv
0$ and $f \equiv 0$, as claimed.
\end{proof}

\subsection{Bandlimited functions}
For a simple comparison of the results in Theorems~\ref{tm1}, \ref{th_zeros_sec}, and \ref{th_zeros_tp},  we mention the following result for bandlimited functions.
\begin{tm}
\label{th_uniqueness_pw}
Let $f \in \mathrm{PW}^\infty \setminus \{0\}$. Then $D^-(N_{f},\mult_f) \leq 1$.
\end{tm}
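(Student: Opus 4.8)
The plan is to follow the blueprint of the proof of Theorem~\ref{tm1}, replacing the Gaussian growth estimate by the classical Bernstein bound for bandlimited functions. First I would record the two structural facts that make the Paley--Wiener case the simplest of the three. Since $f\in\mathrm{PW}^\infty$ is the Fourier transform of a distribution supported in $[-1/2,1/2]$, it extends to an entire function of exponential type $\pi$, and because $f$ is also bounded on $\bR$, a Phragm\'en--Lindel\"of argument in the upper and lower half-planes (applied to $e^{\pm i\pi z}f(z)$) yields the growth estimate $|f(x+iy)|\le \|f\|_\infty\, e^{\pi|y|}$ for all $x,y\in\bR$. This plays the role of Lemma~\ref{l1}. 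In contrast to the Gaussian case, no analogue of Lemma~\ref{l2} is needed: a real zero of $f$ of multiplicity $m$ is simply a zero of the entire extension of multiplicity $m$, so the pairs counted by $(N_f,\mult_f)$ are already complex zeros lying on $\bR$. The conceptual point is that for an entire function of order $1$ the \emph{linear} density of real zeros can be matched directly against the \emph{linear} growth order, whereas for the Gaussian (order $2$) one first had to inflate each real zero into a vertical lattice of complex zeros via Lemma~\ref{l2}.

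Next I would argue by contradiction exactly as in Theorem~\ref{tm1}. Assuming $D^-(N_f,\mult_f)>1$, I choose $\nu\in(1,D^-(N_f,\mult_f))$ and $R_0$ so that, taking the center $x=0$ in \eqref{eq_bdens}, the real zeros satisfy $\sum_{\lambda\in N_f\cap[-r,r]}\mult_f(\lambda)\ge 2\nu r$ for every $r\ge R_0$. After a harmless translation I may assume $f(0)\ne 0$ (replace $f$ by $f(\cdot+x_0)$ for a real $x_0$ with $f(x_0)\ne 0$; the type, the sup-norm and the density $D^-$ are all translation invariant). Writing $n(r)$ for the number of complex zeros of $f$ in $\overline{B(0,r)}$ counted with multiplicity, the previous sentence gives the pointwise lower bound $n(r)\ge 2\nu r$ for all $r\ge R_0$.

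The contradiction is then extracted from Jensen's formula~\eqref{jens}. On the one hand, the growth estimate gives
\[
\frac{1}{2\pi}\int_0^{2\pi}\log|f(Re^{i\theta})|\,d\theta
\le \log\|f\|_\infty+\frac{R}{2}\int_0^{2\pi}|\sin\theta|\,d\theta
= \log\|f\|_\infty+2R,
\]
so that $\int_0^R \tfrac{n(r)}{r}\,dr\le 2R+O(1)$. On the other hand, integrating the pointwise bound $n(r)\ge 2\nu r$ yields $\int_0^R \tfrac{n(r)}{r}\,dr\ge \int_{R_0}^R 2\nu\,dr=2\nu(R-R_0)$. Combining the two estimates and letting $R\to\infty$ forces $\nu\le 1$, contradicting $\nu>1$; hence $D^-(N_f,\mult_f)\le 1$.

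The step I expect to require the most care is the comparison of constants in the last paragraph, and in particular the observation that the density hypothesis controls $n(r)$ for \emph{all} large $r$ rather than for a single radius. This is what makes the argument lossless: one lower-bounds the Jensen integral directly through the pointwise estimate $n(r)\ge 2\nu r$, matching the same linear rate $2R$ that appears on the growth side, and thereby avoids the logarithmic loss that would arise from trying to deduce a pointwise bound on $n(R)$ from an integral bound. The value $2=\tfrac{1}{2\pi}\int_0^{2\pi}\pi|\sin\theta|\,d\theta$ is precisely the $\sin\pi z$ normalization, which is why the threshold comes out to be exactly $1$ (and is sharp, as the sinc and cosine examples show).
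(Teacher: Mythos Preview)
Your proof is correct and follows exactly the approach the paper indicates: the paper's own proof consists only of the remark that the result follows from the Paley--Wiener characterization, Jensen's formula, and Beurling's argument, and you have written out precisely these details. Your computation of the constant $2 = \tfrac{1}{2\pi}\int_0^{2\pi}\pi|\sin\theta|\,d\theta$ on the growth side, matched against the lower bound $n(r)\ge 2\nu r$ from the density hypothesis, is the heart of the argument and is carried out cleanly.
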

\begin{proof}
The result follows from the Paley-Wiener characterization of bandlimited functions as
restrictions of entire functions of exponential type, and Jensen's formula. Beurling's
proof \cite{be66,be89} applies almost verbatim.
\end{proof}

\section{Proof of the sampling theorems}
\label{sec_proofs}

The proofs of our main theorems are now short and follow from the
combination of the characterization of sampling sets without
inequalities (Theorem~\ref{th_wl_tp}) and the new insights about the density
of zero sets in shift-invariant spaces (Section~4).

\begin{proof}[Proof of Theorem \ref{th_samp_der_tp}]
The necessity of the density conditions is stated in Proposition \ref{prop_nec}.
For the sufficiency, we apply the characterization of Theorem \ref{th_wl_tp}. Suppose that
$D^{-}(\Lambda, \ml)>1$, and let $(\Gamma, \mg) \in \WZ(\Lambda, \ml)$. By Lemma
\ref{lemma_sep}, $D^{-}(\Gamma, \mg)>1$. Hence, by Theorem \ref{th_zeros_tp}, $(\Gamma, \mg)$
is a uniqueness set for $\sisp ^\infty (g)$. Therefore, the criterion in Theorem
\ref{th_wl_tp} is satisfied, and we conclude that $(\Lambda, \ml)$ is a sampling set
for $\sisp ^2 (g)$.
\end{proof}

\begin{proof}[Proof of Theorem \ref{th_samp_der_sec}]
The proof is the same as for  Theorem \ref{th_samp_der_tp}; this time
we  resort  to
Theorem \ref{th_zeros_sec} (instead of Theorem~\ref{th_zeros_tp}).
\end{proof}

\begin{proof}[Proof of Theorem \ref{th_samp_der_pw}]
The first part of the proof (treating $\mathrm{PW}^\infty $) is similar to the
one of Theorem \ref{th_samp_der_tp}. If  $(\Lambda, \ml)$ is a separated set with finite height and
density $D^{-}(\Lambda, \ml) > 1$, then Theorem \ref{th_wl_pw} (combined with Lemma~\ref{lemma_sep} and Theorem~\ref{th_uniqueness_pw}) implies
 that $(\Lambda, \ml)$ is a sampling set
for $\mathrm{PW}^\infty$.   As a second step, we use Proposition \ref{prop_pw} to extend the
conclusion to $\mathrm{PW}^2$. More precisely, if $D^{-}(\Lambda, \ml) > 1$, we select $\alpha
<1$ such that $\alpha D^{-}(\Lambda, \ml) = D^{-}(\alpha^{-1} \Lambda, \ml) > 1$. We conclude
that
$(\alpha^{-1} \Lambda, \ml)$ is a sampling set for $\mathrm{PW}^\infty$, and therefore, by
Proposition \ref{prop_pw}, $(\Lambda, \ml)$ is a sampling set for $\mathrm{PW}^2$.
\end{proof}

\section{Consequences for Gabor frames}

The Hermite-sampling results of Theorems~\ref{th_samp_der_tp} and~\ref{th_samp_der_sec} can be applied in order to obtain sharp density results for multi-window Gabor frames. This extends our previous work in  \cite{grrost17} and was,
in fact, one of our original  motivations for the present work. We obtain new families of multi-window Gabor frames with
optimal conditions for semi-regular sets of time-frequency shifts.

\subsection{Multi-window Gabor frames}
Let  $\pi(x,w)g(t)= g(t-x) e^{2\pi i w t}$ denote the time-frequency
shift of $g$ by $(x,w) \in \Rst \times \Rst  $.
For given windows $g^1, \ldots, g^N \in L^2(\Rst)$ and sets
$\Delta^1, \ldots, \Delta^N \subseteq \Rst^2$
the associated  multi-window Gabor system   is
\begin{align}
\mathcal{G}(g^1, \ldots, g^N, \Delta^1, \ldots \Delta^N)
=\sett{ \pi(x,w)g^j  \, : \,(x,w) \in \Delta^j, j=1,\ldots,N}\, .
\end{align}

It will be convenient to use the notation $G=(g^1,\ldots,g^N)$,
$\vec\Delta=(\Delta^1,\ldots,\Delta^N)$ and
$\mathcal{G}(G, \vec\Delta)$. When all the sets $\Delta^j$ are equal, we just write
$\mathcal{G}(G, \Delta)$.

\subsection{Connection between sampling and Gabor frames}
For semi-regular sets $\vec\Delta$, the Gabor frame property can be
related to a sampling problem as follows.
\begin{tm}
\label{tm_gab_con}
Assume that  $G=(g^1, \ldots, g^N) \in (W_0(\Rst))^N$ has stable integer shifts
and that the sets $\Lambda^1, \ldots, \Lambda^N \subseteq \Rst$ are separated.
Let
$\vec\Delta=(\Delta^1,\ldots,\Delta^N)$ be given by
$\Delta^j := (-\Lambda^j)\times\Zst$.

Then $\mathcal{G}(G,\vec\Delta)$ is a frame for $L^2(\Rst)$ if
and only if $\vec\Lambda+(x,\ldots,x)$ is a sampling set for $\sisp^2(G)$ for all $x \in \Rst$.
\end{tm}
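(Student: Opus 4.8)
The statement is an equivalence between a Gabor frame property for $\mathcal{G}(G,\vec\Delta)$ with $\Delta^j=(-\Lambda^j)\times\Zst$ and a sampling property for the vector-valued shift-invariant space $\sisp^2(G)$. The natural bridge is the Zak transform, which converts the semi-regular time-frequency structure (frequency shifts along the full integer lattice $\Zst$, time shifts along $-\Lambda^j$) into a pointwise-in-$x$ family of sampling problems. The plan is to fix the frequency lattice $\beta\Zst$ with $\beta=1$ and exploit that the Gabor frame inequality
\begin{align*}
A\norm{h}_2^2 \le \sum_{j=1}^N \sum_{(u,w)\in\Delta^j} \abs{\ip{h}{\pi(u,w)g^j}}^2 \le B\norm{h}_2^2
\end{align*}
can be rewritten, via the Fundamental Identity of Gabor analysis or directly through the Zak transform $Zh(x,\xi)=\sum_{k} h(x-k)e^{2\pi i k\xi}$, as an integral over the unit cell in the $x$-variable of quantities that are precisely the sampled values of a function in $\sisp^2(G)$ translated by $x$.

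\textbf{Key steps.} First I would recall that for a window $g$ with good decay (here $g^j\in W_0$), the map $h\mapsto \ip{h}{\pi(-\lambda,w)g^j}$, summed against $w\in\Zst$, is governed by the periodization in the frequency variable, so that for each fixed $x$ the inner products $\sett{\ip{h}{\pi(-\lambda,w)g^j}:w\in\Zst}$ encode the value at $\lambda$ of the function $F^j_x := \big(\sum_k c_k(x) g^j(\cdot-k)\big)$ evaluated at $\lambda+x$, where the coefficients $c_k(x)$ come from the Zak transform of $h$. Concretely, I expect to establish an isometry (up to constants coming from the Riesz bounds) between $h\in\ltwo$ and a field $x\mapsto F_x\in\sisp^2(G)$, indexed by $x$ ranging over a fundamental domain, with $\norm{h}_2^2 \asymp \int_0^1 \norm{F_x}_2^2\,dx$. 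Under this correspondence the frame sum becomes
\begin{align*}
\sum_{j=1}^N\sum_{\lambda\in\Lambda^j}\sum_{w\in\Zst}\abs{\ip{h}{\pi(-\lambda,w)g^j}}^2
\asymp \int_0^1 \sum_{j=1}^N \sum_{\lambda\in\Lambda^j} \abs{F^j_x(\lambda+x)}^2\,dx,
\end{align*}
so that the Gabor frame inequality holds for $h$ if and only if the vector-valued sampling inequality \eqref{eq_vec_samp} holds for the translated set $\vec\Lambda+(x,\ldots,x)$ applied to $F_x$. The final step is to argue that the frame inequality holding for all $h$ is equivalent to the sampling inequality holding for $F_x$ for (almost) every $x$, and then to upgrade ``almost every $x$'' to ``every $x$'' using continuity of the sampling bounds in the translation parameter, which follows from the $W_0$-regularity of the generators $G$.

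\textbf{Main obstacle.} The delicate point is the passage from the integrated inequality over $x$ to the pointwise statement \emph{for all} $x\in\Rst$, rather than for almost every $x$. An integral equivalence $\int_0^1(\cdots)\,dx$ with uniform constants only immediately yields the sampling inequality for a.e.\ $x$; promoting this to every single $x$ requires a separate continuity or perturbation argument, showing that the optimal sampling constants $A(x),B(x)$ for $\vec\Lambda+(x,\ldots,x)$ depend continuously (or at least lower/upper semicontinuously) on $x$, so that a uniform bound on a dense set forces a uniform bound everywhere. This is exactly where the amalgam-space hypothesis $G\in(W_0)^N$ and the separatedness of the $\Lambda^j$ enter, guaranteeing that the frame operator depends continuously on the shift parameter. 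I would handle this by invoking the norm equivalence \eqref{eq_vec_riesz} together with the uniform convergence afforded by $W_0$, so that translating the sampling set by a small amount perturbs the sampled $\ell^2$-norm by a controlled amount, closing the gap between a.e.\ $x$ and all $x$.
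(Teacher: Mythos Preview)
The paper does not actually prove this theorem: it remarks that it is a vector-valued extension of \cite[Theorem~2.3]{grrost17} and omits the argument entirely. Your Zak-transform/fibration approach---identifying $L^2(\bR)$ with $L^2([0,1],\ell^2(\Zst))$ via $c_k(x)=h(x-k)$ and rewriting the Gabor frame sum as $\int_0^1\sum_{j}\sum_{\lambda\in\Lambda^j}|F_x^j(\lambda+x)|^2\,dx$---is the standard route for equivalences of this type and is almost certainly what \cite{grrost17} does in the scalar case, so your proposal is consistent with the intended proof.

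One clarification on your ``main obstacle''. You locate the difficulty in upgrading ``a.e.\ $x$'' to ``every $x$'' in the direction frame $\Rightarrow$ sampling, but that step is actually the easier one: localising with $h$ whose fibered coefficients $c(\cdot)$ are concentrated near a given $x_0$, and using continuity of $x\mapsto\sum_{j,\lambda}|F^j(\lambda+x)|^2$ for fixed $F\in\sisp^2(G)$, yields the sampling inequality at \emph{every} $x_0$ directly, with constants inherited from the frame bounds. The more delicate direction is the converse. The hypothesis that $\vec\Lambda+(x,\ldots,x)$ is a sampling set for each $x$ carries no a~priori uniformity of the sampling constants, yet uniformity over $x\in[0,1]$ is exactly what one must integrate to recover the lower frame bound. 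Your proposed remedy---continuity of the optimal bound $A(x)$---is the right instinct, but note that $A(x)$, being an infimum of continuous functions of $x$, is in general only upper semicontinuous, which does not by itself force $\inf_{x\in[0,1]}A(x)>0$. This is closed either by establishing equicontinuity of the family $\{x\mapsto\sum_{j,\lambda}|F^j(\lambda+x)|^2:\|c\|_2=1\}$ from the $W_0$ hypothesis and the separation of the $\Lambda^j$, or---more in the spirit of the paper---by a compactness/weak-limit argument invoking Theorem~\ref{th_wl_vec}. Either way the gap is small and fillable with the ingredients you already name.
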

Theorem \ref{tm_gab_con} is a vector-valued extension of
\cite[Theorem 2.3]{grrost17} (equivalence of conditions (a) and (b)),
and we therefore  omit its proof.

\subsection{Characterization of  multi-window Gabor frames with
  totally positive windows}
\begin{tm}
\label{th_gab_1}
Let $g$ be a totally positive function of Gaussian-type or the
hyperbolic secant and  let $\Lambda \subseteq \Rst$ be a separated set.
Let $\{ p_1, \dots p_N\} $ be a basis of the space of polynomials of
degree less than $N$. Set   $g^j = p_j\big( \tfrac{d}{dx}\big) g$ and  $G=(g^1,\ldots,g^N)$.

Then $\mathcal{G}(G, (-\Lambda) \times \Zst)$ is a frame for $L^2(\Rst)$
if and only if $D^-(\Lambda) > 1/N$.
\end{tm}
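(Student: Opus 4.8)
The plan is to reduce the frame property to the Hermite-sampling statements of Theorems~\ref{th_samp_der_tp} and~\ref{th_samp_der_sec} via the dictionary of Theorem~\ref{tm_gab_con}, and then to read off the density condition from the fact that the sampling multiplicity is the constant $N$. Since $g$ (a totally positive function of Gaussian type or the secant) is smooth with rapidly decaying derivatives, $G=(p_1(\tfrac{d}{dx})g,\dots,p_N(\tfrac{d}{dx})g)$ lies in $(W_0(\Rst))^N$ and, as $g$ has stable integer shifts, so does $G$; thus the hypotheses of Theorem~\ref{tm_gab_con} are met, with all $\Lambda^j=\Lambda$.

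First I would record the reduction to sampling with derivatives. As $\{p_1,\dots,p_N\}$ is a basis of the polynomials of degree $<N$, writing $p_j(\xi)=\sum_{i=0}^{N-1}a_{ji}\xi^i$ produces a fixed invertible matrix $A=(a_{ji})$ with $\big((p_j(\tfrac{d}{dx})f)(\lambda)\big)_{j=1}^N=A\,\big(f^{(i)}(\lambda)\big)_{i=0}^{N-1}$ for every $f\in\sisp^2(g)$ and every $\lambda$. Since $A$ is invertible and independent of $\lambda$, we obtain $\sum_{j}|(p_j(\tfrac{d}{dx})f)(\lambda)|^2\asymp\sum_{i=0}^{N-1}|f^{(i)}(\lambda)|^2$, so that $\vec\Lambda=(\Lambda,\dots,\Lambda)$ is a sampling set for $\sisp^2(G)$ exactly when $(\Lambda,N)$ (constant multiplicity $N$) is a sampling set for $\sisp^2(g)$ in the sense of~\eqref{eq:lpstable}; the same holds verbatim after any translation. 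Combined with Theorem~\ref{tm_gab_con} (so that $\vec\Lambda+(x,\dots,x)=(\Lambda+x,\dots,\Lambda+x)$), this gives
\[
\mathcal{G}(G,(-\Lambda)\times\Zst)\ \text{is a frame}\ \Longleftrightarrow\ (\Lambda+x,N)\ \text{is a sampling set for}\ \sisp^2(g)\ \text{for every}\ x\in\Rst .
\]
Finally, because the multiplicity is constant and Beurling density is translation invariant, $D^-(\Lambda+x,N)=N\,D^-(\Lambda)$ for all $x$.

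For sufficiency, if $D^-(\Lambda)>1/N$ then $D^-(\Lambda+x,N)=N\,D^-(\Lambda)>1$ for every $x$, so Theorem~\ref{th_samp_der_tp}(i) (respectively Theorem~\ref{th_samp_der_sec}(i) for the secant) makes $(\Lambda+x,N)$ a sampling set for $\sisp^2(g)$ for all $x$, and the frame property follows from the displayed equivalence. For the easy half of necessity, a frame forces in particular at $x=0$ that $(\Lambda,N)$ is a sampling set for $\sisp^2(g)$; Theorem~\ref{th_samp_der_tp}(ii) (or~\ref{th_samp_der_sec}(ii)) then yields $N\,D^-(\Lambda)=D^-(\Lambda,N)\ge 1$, i.e. $D^-(\Lambda)\ge 1/N$.

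The real work is to upgrade this to the strict inequality, i.e. to exclude the critical case $D^-(\Lambda)=1/N$, and this is the main obstacle. Here I would use the ``sampling without inequalities'' characterization: by Theorem~\ref{th_wl_tp} the frame property is equivalent to the statement that every weak limit of $(\Lambda,N)$ under real translates is a uniqueness set for $\sisp^\infty(g)$ (as $x$ ranges over $\Rst$, the integer translates of $\Lambda+x$ appearing in Theorem~\ref{th_wl_tp} exhaust all translates $\Lambda+t$, $t\in\Rst$). Assuming $D^-(\Lambda,N)=1$, I would extract a weak limit $(\Gamma,\mg)$ saturating the density, so that its lower and upper densities both equal one, and then exhibit a nonzero $f\in\sisp^\infty(g)$ vanishing to the prescribed multiplicities on $\Gamma$, contradicting uniqueness. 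The construction rests on the sharpness of Theorems~\ref{tm1} and~\ref{th_zeros_tp}: for the Gaussian $\phi_a$ the alternating choice $c_k=(-1)^k$ already yields $f=\sum_k(-1)^k\phi_a(\cdot-k)$ whose real zeros form the density-one set $\tfrac12+\Zst$, and an $N$-fold analogue produces multiplicity-$N$ zeros on a set of density $1/N$; these are transferred to a totally positive generator through the comparison operator $\prod_{j}(I+\delta_j\tfrac{d}{dx})$ of Theorem~\ref{th_zeros_tp}, while the meromorphic analogue (Lemma~\ref{l4}) handles the secant. The delicate point — and the step I expect to be hardest — is that the weak limit attached to an \emph{arbitrary} critical $\Lambda$ need not be a lattice, so one must argue that \emph{any} uniformly critical configuration fails to be a uniqueness set; this is precisely the sharp, rigid behavior of zero sets for these generators and mirrors the single-window argument of \cite{grrost17}.
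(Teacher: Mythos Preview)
Your sufficiency argument is correct and essentially matches the paper's; you package the weak-limit step inside Theorems~\ref{th_samp_der_tp}/\ref{th_samp_der_sec} whereas the paper invokes Theorem~\ref{th_wl_tp} and the zero-set theorems directly, but this is the same argument at a slightly different level of abstraction.

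The gap is in the necessity direction. You correctly observe that Theorems~\ref{th_samp_der_tp}(ii)/\ref{th_samp_der_sec}(ii) (equivalently Proposition~\ref{prop_nec}) only yield $D^-(\Lambda)\ge 1/N$, and you identify the upgrade to strict inequality as ``the main obstacle''. But your proposed resolution does not work. You want to show that if $D^-(\Lambda,N)=1$ then some weak limit $(\Gamma,m_\Gamma)$ fails to be a uniqueness set for $\sisp^\infty(g)$, i.e.\ there exists a nonzero $f\in\sisp^\infty(g)$ vanishing on $\Gamma$ with the prescribed multiplicities. This is an \emph{interpolation} problem, not a uniqueness problem, and nothing in Theorems~\ref{tm1}, \ref{th_zeros_sec}, or~\ref{th_zeros_tp} produces such an $f$. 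Your lattice example $\sum_k(-1)^k\phi_a(\cdot-k)$ is fine for $\Gamma=\tfrac12+\bZ$, but as you yourself note, the weak limit $\Gamma$ of an arbitrary critical $\Lambda$ need not be a lattice, and there is no general mechanism here to manufacture the required $f$ for such $\Gamma$. The transference operator $\prod_j(I+\delta_j\tfrac{d}{dx})$ goes in the wrong direction for this purpose (it takes $\sisp^\infty(g)$ to $\sisp^\infty(\phi)$, not conversely), so ``transferring'' a Gaussian construction back to $g$ is not available either.

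The paper does not attempt this at all. It obtains the strict inequality $D^-(\Lambda)>1/N$ directly from the known density theorem for multi-window Gabor frames (the citation is \cite[Thm.~12.2.11]{zzgab}, with \cite{grorro15} as an alternative), which for windows in a suitable class (satisfied here) gives strict inequality outright. That is the missing ingredient in your proposal: replace your attempted critical-case construction by an appeal to the Gabor density theorem, and the proof is complete.
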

\begin{proof}
The necessity of the condition $D^-(\Lambda) >  1/N$ for multi-window Gabor frames over a rectangular lattice is contained
in~\cite[Thm.~12.2.11]{zzgab}. It also   follows from
general results, see, e.g.,  \cite{grorro15}.

For the sufficiency,
assume that $D^-(\Lambda) > 1/N$, and
let $\vec\Lambda := (\Lambda,\ldots,\Lambda)$. By Theorem \ref{tm_gab_con},
it suffices to show that for all $x\in \Rst$, $\vec\Lambda+\vec x$ is a sampling set
for the vector-valued shift-invariant space $V^2(G)$,
where $\vec x := (x,\ldots,x)$. To verify this condition,
we apply Theorem \ref{th_wl_tp}.

Let $\vec\Gamma \in \WZ (\vec\Lambda+\vec x)$.  This set is necessarily of the form
$\vec\Gamma=(\Gamma, \ldots, \Gamma)$, for some $\Gamma \in \WZ  (\Lambda+x)$,
and, by Lemma \ref{lemma_sep}, $D^{-}(\Gamma) > 1/N$.
Assume that $F \in \sisp^\infty(G)$ vanishes on $\vec\Gamma$. We need
to show
that $F \equiv 0$. Explicitly $F$ is given by an expansion  $F =
\sum_{k \in \Zst} c_k G(\cdot-k)$ with $c\in \ell ^\infty (\bZ
)$.

We now relate the sampling problem for vector-valued functions to a
sampling problem with derivatives. To do this, we set $P=(p_1, \dots ,
p_N)$ and $Q= (1, x, \dots , x^{N-1})$. By assumption on $P$, there is
an invertible $N\times N$-matrix $B$, such that $BP=Q$, i.e., $x^{j-1} = \sum
_{k=1}^N b_{jk} p_k(x)$ for $j=1, \dots, N$ and thus
\begin{equation}
  \label{eq:h9}
\sum   _{k=1}^N b_{jk} g^k  = \sum _{k=1}^N b_{jk}  p_k\big(
\tfrac{d}{dx}\big) g =    g^{(j-1)} \, .
\end{equation}
Consequently,
after taking linear combinations of translates we obtain
$$
BF (x) _j  = \sum _{l\in \bZ } c_l BG(x-l)_j = \sum _{l\in \bZ } c_l
g^{(j-1)}(x-l) = f^{(j-1)}(x) \, ,
$$
where $f=\sum_l c_l g(\cdot-l)\in \sisp^\infty(g)$ is the first component of $BF$.
If $F$ vanishes on $\vec\Gamma $, then also  $f^{(j-1)} $
vanishes on $\Gamma $ for $j=1, \dots , N$.
Hence, $f$ vanishes on $\Gamma$ with multiplicity $N$ and $D^-(N_f,
m_f) \geq N D^-(\Gamma ) >1$.
By Theorem \ref{th_zeros_tp} or \ref{th_zeros_sec}, this implies that $f \equiv 0$.
Hence, $c_k \equiv 0$ and $F \equiv 0$, as desired.
\end{proof}

We single out two special cases of Theorem~\ref{th_gab_1}.

\begin{cor}
Let $g$ be a totally positive function of Gaussian-type or the
hyperbolic secant and  let $\Lambda \subseteq \Rst$ be a separated set.
Let $a_1, a_2, \dots , a_{N-1} \in \bR $,   $g^1=g$, and set
\begin{align}
 g^j := \prod_{k=1}^{j-1} \left(a_{k} I+ \tfrac{d}{dx}\right) g,
  \qquad j=2,\ldots,N ,
 \end{align}
and  $G=(g^1,\ldots,g^N)$.
Then $\mathcal{G}(G, \Lambda \times \Zst)$ is a frame for $L^2(\Rst)$
if and only if $D^-(\Lambda) > 1/N$.
\end{cor}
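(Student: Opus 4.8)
The plan is to derive the Corollary as the special case of Theorem~\ref{th_gab_1} in which the polynomial basis is chosen to match the prescribed family of differential operators. First I would set $p_1 \equiv 1$ and, for $j = 2, \ldots, N$,
\[
p_j(\xi) := \prod_{k=1}^{j-1}(a_k + \xi),
\]
so that $g^j = p_j\big(\tfrac{d}{dx}\big) g$ holds by construction for every $j = 1, \ldots, N$ (with the empty product giving $g^1 = g$). The single point to verify is then that $\{p_1, \ldots, p_N\}$ is a basis for the space of polynomials of degree less than $N$, which is precisely the hypothesis of Theorem~\ref{th_gab_1}.

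This verification is elementary and is the only substantive step: each $p_j$ is monic of degree $j-1$, so the family consists of $N$ polynomials of pairwise distinct degrees $0, 1, \ldots, N-1$. Comparing leading coefficients shows they are linearly independent, and since the space of polynomials of degree less than $N$ has dimension $N$, they form a basis. I would stress that this holds for arbitrary real $a_1, \ldots, a_{N-1}$, with no distinctness or other restriction imposed on them.

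With the basis identified, Theorem~\ref{th_gab_1} applies directly, up to one cosmetic discrepancy: that theorem is stated for the set of time-frequency shifts $(-\Lambda)\times\Zst$, whereas the Corollary uses $\Lambda\times\Zst$. I would reconcile this by invoking Theorem~\ref{th_gab_1} with the reflected set $-\Lambda$ in place of $\Lambda$; then $\big(-(-\Lambda)\big)\times\Zst = \Lambda\times\Zst$, and the density threshold reads $D^-(-\Lambda) > 1/N$. Since $-\Lambda$ is separated exactly when $\Lambda$ is, and since the lower Beurling density is reflection-invariant, $D^-(-\Lambda) = D^-(\Lambda)$, so the condition is equivalent to $D^-(\Lambda) > 1/N$, as claimed. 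I do not anticipate any genuine obstacle here: beyond the basis computation above, the argument is a direct appeal to Theorem~\ref{th_gab_1} together with the harmless reflection needed to match its orientation convention.
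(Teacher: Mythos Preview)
Your proposal is correct and matches the paper's approach: the paper presents this corollary without proof, simply noting that it is a special case of Theorem~\ref{th_gab_1}, and your identification of the basis $p_j(\xi)=\prod_{k=1}^{j-1}(a_k+\xi)$ (monic of degree $j-1$) is exactly the intended specialization. Your handling of the $\Lambda$ versus $-\Lambda$ discrepancy via reflection-invariance of $D^-$ is a valid way to align the statements and is more careful than the paper, which silently passes over this point.
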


For the second corollary we use  the basis of Hermite functions $\{h_k: k \geq
0\}$ which is defined by
$$
  h_k(x)= \gamma_k e^{\pi x^2} ~\frac{d^{k}}{dx^{k}} e^{-2\pi x^2} = (-1)^k\gamma_k e^{-\pi x^2} ~H_k(x),
$$
with the Hermite polynomials $H_k$ of degree $k$ and some  normalizing constant $\gamma_k>0$.

\begin{cor}
\label{coro_hermite}
Let $\Lambda \subseteq \Rst$ be a separated set and  $b>0$.
Then $\mathcal{G}(h_0, \ldots, h_{N-1}, \Lambda \times b \Zst)$ is a frame of $L^2(\Rst)$
if and only if $D^{-}(\Lambda) > b/N$.
\end{cor}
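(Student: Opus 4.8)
The plan is to reduce Corollary~\ref{coro_hermite} to the special case of Theorem~\ref{th_gab_1} in which the generator is the $L^2$-normalized Gaussian and the windows $g^j$ are built from a polynomial basis. The key observation is that the Hermite functions are exactly of the form $p_j(\tfrac{d}{dx})\phi$ for a Gaussian $\phi$, so that the family $\{h_0,\ldots,h_{N-1}\}$ fits the template of Theorem~\ref{th_gab_1} after a suitable rescaling to account for the lattice parameter $b$. First I would fix the Gaussian $\phi(x)=e^{-\pi x^2}$ (the case $a=\pi$ of $\phi_a$, which is totally positive of Gaussian type with $n=0$). By the defining formula $h_k(x)=\gamma_k e^{\pi x^2}\tfrac{d^k}{dx^k}e^{-2\pi x^2}$, a direct computation shows that each $h_k$ is $e^{-\pi x^2}$ times a polynomial $H_k$ of degree exactly $k$; equivalently $h_k = q_k(\tfrac{d}{dx})\phi$ for a suitable polynomial $q_k$ of degree $k$, so $\{q_0,\ldots,q_{N-1}\}$ is a graded system and hence a basis of the space of polynomials of degree $<N$.

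The second step is to absorb the frequency spacing $b$. Theorem~\ref{th_gab_1} is stated for the lattice $(-\Lambda)\times\bZ$, i.e.\ integer frequency spacing, whereas here we have $\Lambda\times b\bZ$. I would apply the standard dilation/metaplectic covariance of Gabor systems: the unitary dilation $D_c f(x)=c^{1/2}f(cx)$ maps $\pi(x,w)g$ to $\pi(x/c,cw)D_cg$, and it sends Hermite functions to (rescaled) Hermite functions. Choosing $c$ so that $cb=1$ converts the lattice $\Lambda\times b\bZ$ into $(\Lambda/c)\times\bZ$ and simultaneously rescales the Gaussian; since $D^-(\Lambda/c)=c\,D^-(\Lambda)=D^-(\Lambda)/b$, the frame property for $\mathcal{G}(h_0,\ldots,h_{N-1},\Lambda\times b\bZ)$ is equivalent to a frame property of the dilated windows over $(\Lambda/c)\times\bZ$. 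One must check that the dilated windows are again of the form $p_j(\tfrac{d}{dx})(\text{Gaussian})$ with a polynomial basis $\{p_j\}$, which follows because dilation preserves the graded polynomial structure. Then Theorem~\ref{th_gab_1} applies and gives the frame property if and only if $D^-(\Lambda/c)>1/N$, i.e.\ iff $D^-(\Lambda)>b/N$.

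Alternatively, and perhaps more cleanly, I would avoid dilations by reducing directly to Theorem~\ref{tm_gab_con} and the zero-set transference machinery of Section~4, mimicking the proof of Theorem~\ref{th_gab_1} verbatim but carrying the parameter $b$ through. Setting $\vec\Delta=(-\Lambda)\times b\bZ$ and writing the sampling condition on the associated shift-invariant space $V^2(G)$ with $G=(h_0,\ldots,h_{N-1})$, the vanishing of $F\in V^\infty(G)$ on a weak limit $\vec\Gamma=(\Gamma,\ldots,\Gamma)$ translates—after the linear change of windows $BP=Q$—into the statement that $f=\sum_k c_k\phi(\cdot-k)$ vanishes on $\Gamma$ to order $N$, whence $D^-(N_f,m_f)\ge N\,D^-(\Gamma)$. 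Theorem~\ref{tm1} then forces $f\equiv 0$ provided $N\,D^-(\Gamma)>1$, and Lemma~\ref{lemma_sep} together with the density computation gives the threshold $b/N$.

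The main obstacle is the careful bookkeeping of constants and the lattice parameter $b$ under rescaling, together with verifying that the frequency spacing $b\bZ$ (rather than $\bZ$) is correctly reflected in the density threshold $b/N$ rather than $1/N$. In particular one must confirm that the Zak-transform/sampling correspondence of Theorem~\ref{tm_gab_con} is compatible with a nonstandard frequency lattice $b\bZ$; this is where I expect the only genuine subtlety to lie, since the structural equivalence between Gabor frames and vector-valued sampling was proved for integer frequency spacing. The cleanest resolution is the dilation argument, which reduces everything to the already-established integer-spacing case and isolates the role of $b$ in a single density scaling relation $D^-(\Lambda/c)=D^-(\Lambda)/b$.
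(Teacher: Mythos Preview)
Your proposal is correct and follows essentially the same route as the paper: use the dilation $D_{1/b}$ to convert the frequency lattice $b\bZ$ into $\bZ$ (turning $\Lambda\times b\bZ$ into $b\Lambda\times\bZ$ and replacing $h_k$ by $h_k(b^{-1}\cdot)$), observe that the (dilated) Hermite functions are exactly of the form $p_j(\tfrac{d}{dx})$ applied to a Gaussian with $\{p_j\}$ a polynomial basis, and then invoke Theorem~\ref{th_gab_1} together with $D^-(b\Lambda)=D^-(\Lambda)/b$. Your identification of the only subtlety—the nonstandard frequency spacing in Theorem~\ref{tm_gab_con}—and your resolution via dilation are precisely what the paper does.
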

\begin{proof}
We use the fact that $\mathcal{G}(h_0, \ldots, h_{N-1}, \Lambda \times b \Zst)$ is a frame
if and only if
\[\mathcal{G}( h_0(b\inv \cdot), \ldots,  h_{N-1}(b\inv \cdot), b\Lambda \times \Zst),\]
is.
Because the Hermite
polynomials $H_k, k=0, \dots , N-1$, form a basis for the polynomials
of degree $<N$, the span of $h_k$, $0\le k\le N-1$, is the same as the span of all derivatives $\tfrac{d^{j}}{dx^{j}}e^{-\pi x^2}$, $0\le j\le N-1$. The result is a consequence of Theorem \ref{th_gab_1}.
\end{proof}

Corollary \ref{coro_hermite} actually follows from a sampling result
of Brekke and Seip in Fock space
~\cite{bese93}.  It can also  be
reformulated for spaces of  polyanalytic functions. For this
connection   see
 \cite{ab10}.

\section{Postponed proofs}
\label{sec_post}
\subsection{Proof of Proposition \ref{prop_wstar}}
For sets without multiplicities, i.e., $m_\Lambda
\equiv 1$, the proposition is classical.

Let $(\Lambda, \ml)$ be a
separated set with multiplicity   with finite height,
let $(\Gamma, \mg)$ be a set with multiplicity, and $\{k_n: n \geq 1\}
\subseteq \Zst$. Recall that $\Lambda ^j = \{ \lambda \in \Lambda :
m(\lambda ) \geq j\}$.

Suppose first that $\Lambda^j-k_n \weakconv \Gamma^j$, as $n \longrightarrow \infty$ for all
$j=1,\ldots,N$. Then, by the  case without multiplicity,
$\sum_{\lambda \in \Lambda^j} \delta_{\lambda-k_n} \longrightarrow
\sum_{\gamma \in \Gamma^j} \delta_\gamma$,
in the $\sigma(C^*_c,C_c)$ topology. Since $(\Lambda, \ml)$ has finite height,
the claim follows by summing over  $j$.

Conversely, assume that
$\mu_n:= \sum_{\lambda \in \Lambda} \ml(\lambda) \delta_{\lambda-k_n} \longrightarrow
\mu:= \sum_{\gamma \in \Gamma} \mg(\gamma) \delta_\gamma$,
in the $\sigma(C^*_c,C_c)$ topology. As discussed in
\cite[Lemmas 4.3, 4.4]{grorro15}, it follows that
\[
\Lambda^1-k_n=\Lambda-k_n = \supp(\mu_n) \weakconv \supp(\mu) = \Gamma = \Gamma^1.
\]
(Here it is crucial
that the multiplicities  $\ml(\lambda)$, $\mg(\gamma)$ are integers.)
It remains to show that $\Lambda^j-k_n \weakconv \Gamma^j$ for $j>1$.
Since $\Lambda^1-k_n \weakconv \Gamma^1$, the case without
multiplicity  implies
that $\sum_{\lambda \in \Lambda} \delta_{\lambda-k_n} \longrightarrow
\sum_{\gamma \in \Gamma} \delta_\gamma$. Therefore,
\begin{equation}
\label{eq_abcd}
\sum_{\lambda \in \Lambda} (\ml(\lambda)-1) \delta_{\lambda-k_n} \longrightarrow
\sum_{\gamma \in \Gamma} (\mg(\gamma)-1) \delta_\gamma.
\end{equation}
Since $(\Lambda,\ml)$ has finite height, we can proceed by induction.
Indeed, we consider the sets $\Lambda_0:=\Lambda^2$ and $\Gamma_0:=\Gamma^2$, with
multiplicites
$\mult_{\Lambda_0} := \ml-1$ and $\mult_{\Gamma_0}:=\mg-1$, and note that
$\Lambda^j_0=\Lambda^{j+1}$ and $\Gamma^j_0=\Gamma^{j+1}$. $\qed$

\subsection{Sketch of a proof of Theorem \ref{th_wl_vec}}
Let $I:= \{(\lambda,j) \in \Rst^2: \lambda \in \Lambda^j, j=1,\ldots,N\}$ and consider the
matrix $A \in \bC^{I \times \Zst}$, given by
\begin{align*}
A_{(\lambda,j), k} := G^j(\lambda-k).
\end{align*}
Then $\vec\Lambda$ is a sampling set for $\sisp(G)$ if and only if
$A:\ell^p(\Zst) \to \ell^p(I)$ is bounded below. The independence of $p$ of this property
for the range $p \in [1,+\infty]$ follows from (a slight extension of)
Sj\"ostrand's Wiener-type lemma \cite{sj95}. The formulation in
\cite[Proposition A.1]{grrost17} is applicable directly.  Specifically, \cite[Proposition A.1]{grrost17}
concerns a matrix indexed by two \emph{relatively separated} subsets
of the Euclidean space (where a relatively separated set
is just  a finite union of separated sets).
In our case, $I$ is a relatively separated subset of $\Rst^2$,
while $\Zst$ can be embedded into $\Rst^2$ as $\Zst\times\{0\}$. This accounts for the
equivalences $(a) \Leftrightarrow (b)$. The other implications follow,
with very minor modifications, as in the proof of
\cite[Theorem 3.1]{grrost17}. See also \cite[Section 4]{grorro15} for some relevant technical
tools. $\qed$.

\subsection{Sketch of a proof of Proposition \ref{prop_nec}}
The proposition follows from the theory of density of frames. The Paley-Wiener case is
explicitly treated in
\cite{grra96} following the technique of Ramanathan and Steger \cite{rast95}.
For shift-invariant spaces with generators in $g \in W^N_0(\Rst)$, we can use the abstract
density
results for frames from \cite{bchl06} as follows.

Suppose that $(\Lambda,\ml)$ is a sampling set for $\sisp^2(g)$.
By assumption, the Bessel map, $\ell^2(\Zst) \ni c \to \sum_k c_k
g(\cdot-k) \in \sisp^2(g)$, is an isomorphism. The sampling inequality
\eqref{eq:lpstable} with $p=2$ means that the set $\mathcal{F}$ formed by the
 sequences
\begin{align*}
\varphi_{\lambda,j} :=
\left(g^{(j)}(\lambda-k)\right)_{k \in \Zst}, \qquad \lambda \in \Lambda, j=0, \ldots,
\ml(\lambda)-1,
\end{align*}
is a frame for $\ell^2(\Zst)$. We consider the index set
$I:=\{(\lambda,j) \in \Rst^2: \lambda \in \Lambda, j=0, \ldots,
\ml(\lambda)-1\}$ and a map $\alpha:I \to \Zst$ such that
$\alpha(\lambda,j)=l$, with $\abs{l-\lambda} \leq 1/2$. Second, we let
$\Phi(x) := \sum_{j=0}^{N-1} \max_{y: \abs{y-x} \leq 1} \abs{g^{(j)}(y)}$.
Since $g \in W_0^N(\Rst)$, it follows that $\Phi \in W_0(\Rst)$, and we have the estimate
\begin{align*}
\abs{\varphi_{\lambda,j}(k)} =\frac{}{} \abs{g^{(j)}(\lambda-k)} \leq \Phi(\alpha(\lambda)-k),
\end{align*}
which, in the terminology of \cite{bchl06} means that $\mathcal{F}$ is $\ell^1$-localized with
respect to the canonical basis of $\ell^2(\Zst)$. The comparison theorem
\cite[Thm.~3]{bchl06} yields the estimate $D^{-}(I,\alpha) \geq 1$
in terms of the density
\begin{align*}
D^{-}(I,\alpha)=\liminf_{n \longrightarrow \infty} \inf_{k \in \Zst}
\frac{\# \alpha^{-1}([k-n,k+n])}{\# [k-n,k+n]} \, .
\end{align*}
Clearly $D^-(I,\alpha )$  coincides with $D^{-}(\Lambda, \ml)$.

Alternative arguments can be given by checking the general conditions
in  \cite{fghkr} or \cite{ro11}.
$\qed$.

\end{document}